\newtheorem{theorem}{Theorem}[section]
\newtheorem{corollary}[theorem]{Corollary}
 \newtheorem{lemma}[theorem]{Lemma}
 \newtheorem{proposition}[theorem]{Proposition}
 \theoremstyle{definition}
 \newtheorem{definition}[theorem]{Definition}
 \newtheorem{rem}[theorem]{Remark}
 \newtheorem{ex}[theorem]{Example}
\def\cC{{\mathcal C}}
\def\cG{{\mathscr G}}
\def\gauge{\,\cG\,}
\def\dtil{{\widetilde D}}
\def\quil{{\mathscr L}}
\def\quilc{\widehat{\mathscr L}}
\def\quic{{\mathscr C}}
\def\esp#1{{\quad\text{#1}\quad}}
\newcommand{\bz}{\mathbb Z}
\newcommand{\bulletchi}{{\scriptstyle\,\bullet\,}}
\def\bq{\mathbb{Q}}
\def\br{\mathbb{R}}
\def\Q{\mathbb{Q}}
\def\im{{\rm Im\,}}
\def\des{{s^{-1}}}
\newcommand{\fib}{\operatorname{{\cal F}\text{\rm ib}}}
\def\timest{\widetilde\times}
    \newcommand{\lasu}{{\mathfrak{L}}}
     \newcommand{\der}{{\mathcal{D}er}}
      \newcommand{\derr}{{\rm Der}}
      \newcommand{\derge}{{\rm Der}^{\mathcal G}}
      \newcommand{\derpi}{{\rm Der}^{\Pi}}
      \newcommand{\deri}{{\rm Der}^{\cali}}
       \newcommand{\derko}{{\rm Der}^{\mathcal K}_0}
       \newcommand{\derk}{{\rm Der}^{\mathcal K}}
      \newcommand{\coder}{{\rm Coder}}
\newcommand{\nil}{\operatorname{\text{\rm nil}}}
 \newcommand{\lib }{\mathbb{L}}
  \newcommand{\calge }{{\cal G}}
  \newcommand{\calj }{{\cal J}}
    \newcommand{\calk }{{\cal K}}
    \newcommand{\cali }{{\cal I}}
 \newcommand{\fil}{\operatorname{{\mathcal F}}}
  \newcommand{\gil}{\operatorname{{\cal G}}}
  \newcommand{\coprodc}{\,\widehat{\amalg }\,\,}
  \newcommand{\Hom}{\operatorname{\text{\rm Hom}}}
\newcommand{\catcgrp}{\operatorname{{\bf cgrp}}}
\newcommand{\catcla}{\operatorname{{\bf cl}}}
\newcommand{\Span}{\operatorname{{Span}}}
\newcommand{\catdgl}{\operatorname{{\bf dgl}}}
\newcommand{\catcdgl}{\operatorname{{\bf cdgl}}}
\newcommand{\catcdgle}{\operatorname{{\bf cdgl^{\Delta}}}}
\newcommand{\catss}{\operatorname{{\bf sset}}}
\newcommand{\cale}{{\mathcal E}}
 \newcommand{\map}{\operatorname{{\rm map}}}
  \newcommand{\Map}{\operatorname{{\rm map}}}
    \newcommand{\id}{\operatorname{{\rm id}}}
        \newcommand{\ad}{\operatorname{{\rm ad}}}
 \newcommand{\MC}{\operatorname{{\rm MC}}}
  \newcommand{\aut}{\operatorname{{\rm aut}}}
     \newcommand{\ev}{\operatorname{{\rm ev}}}
\newcommand{\mc}{{\MC}}
\newcommand{\catcdgc}{\operatorname{{\bf cdgc}}}
  \newcommand{\otimesc}{\widehat{\otimes}}
   \newcommand{\libc}{{\widehat\lib}}
\begin{document}

\title{Lie models of homotopy automorphism monoids and classifying fibrations}

\author{Yves F\'elix, Mario Fuentes and Aniceto Murillo\footnote{The  authors have been partially supported by the  MICINN grant PID2020-118753GB-I00 of the Spanish Government.}}

\maketitle

\begin{abstract}
Given $X$ a finite nilpotent simplicial set, consider  the  classifying fibrations
$$
X\to B\aut_G^*(X)\to B\aut_G(X)\esp{and} X\to Z\to B\aut_\pi^*(X)
$$
where $G$ and $\pi$ denote, respectively, subgroups of the free and pointed homotopy classes of free and pointed self homotopy equivalences of $X$ which act nilpotently on $H_*(X)$ and $\pi_*(X)$.
 We give algebraic models, in terms of complete differential graded Lie algebras (cdgl's), of the rational homotopy type of these fibrations. Explicitly, if $L$ is a cdgl model of $X$, there are connected sub cdgl's $\derge L$ and $\derpi L$  of the Lie algebra of derivations of $L$ such that the geometrical realization of the sequences of cdgl morphisms
 $$
 L\stackrel{\ad}{\to} \derge L\to \derge L\timest sL\esp{and}L\to L\timest\derpi L\to\derpi L
 $$
 have the rational homotopy type of the above classifying fibrations. Among the consequences we also describe in cdgl terms the Malcev $\bq$-completion of $G$ and $\pi$ together with the rational homotopy type of the classifying spaces $BG $ and $B\pi$.

\end{abstract}

\section*{Introduction}

\bigskip

A foundational result of Stasheff \cite{sta} asserts that the classifying space $B\aut (X)$  of  the topological monoid $\aut(X)$ of self homotopy equivalences of a given finite complex $X$ classifies fibrations  with fiber $X$ by means of a universal  fibration over $B\aut(X)$. Furthermore, see \cite{may}, certain submonoids of $\aut (X)$ produce also classifying fibrations for distinguished families of fibrations. The universal cover of $B\aut(X)$ has the  homotopy type of the classifying space $B\aut_1(X)$ of the monoid of self homotopy equivalences  homotopic to the identity on $X$. Whenever $X$ is simply connected  (see \cite{SS1,tan},  \cite{ber2} for a remarkable extension to the fiberwise setting or \cite{ber3} for the relative case)  the rational homotopy type of $B\aut_1(X)$ is well understood and described in terms of classical algebraic models of $X$. However, the homotopy type of $B\aut (X)$ is as unmanageable, even for $X$ simply connected,   as  its fundamental group $\cale(X)$ of homotopy classes of self homotopy equivalences. For instance, as we will see, the homotopy type of $B\aut(S^n_\bq)$  cannot be obtained as the geometrical realization of any known algebraic structure shaping the rational homotopy type of spaces.

Nevertheless, in this paper we are able to algebraically describe the rational homotopy type of the classifying space of certain non-connected distinguished submonoids of $\aut (X)$. Furthermore, we model the corresponding universal classifying fibrations. As a consequence we also read in terms of these models the structure of nilpotent subgroups, and their classifying spaces, of free and pointed homotopy classes of self homotopy equivalences.

To do so we strongly rely on the homotopy theory developed in the category   $\catcdgl$ of complete differential graded Lie algebras (cdgl's henceforth) by means of the Quillen pair
\begin{equation}\label{pairintro}
\xymatrix{ \catss& \catcdgl \ar@<1ex>[l]^(.50){\langle\,\cdot\,\rangle}
\ar@<1ex>[l];[]^(.50){\lasu}\\}
\end{equation}
given by the global model and realization functor (see next section for a quick review or \cite{bfmt0} for a detailed presentation).

From now on, and throughout the paper, we will often not distinguish a simplicial set $X$ from its realization as a CW-complex. Such an object will always be pointed and we denote by $\aut^*(X)$ the submonoid of $\aut(X)$ of pointed self homotopy equivalences.

Also, by a fibration sequence $F\to E\to B$ we understand its more general meaning: the composition of the two maps is homotopic to a constant $b$ and the induced map from $F$ to the homotopy fiber of $E\to B$ over $b$ is a weak homotopy equivalence. This also applies to the category $\catcdgl$.

 Let $X$ be a finite connected complex, let $G\subset\cale(X)$ be a subgroup,  and consider $\aut_G(X)\subset\aut(X)$ the submonoid of self homotopy equivalences whose homotopy classes are elements in $G$. Then, evaluating each equivalence at the base point induces a fibration sequence
$$
 X\longrightarrow B\aut_G^*(X)\longrightarrow B\aut_G(X)
 $$
 which classifies fibrations $X\to E\to B$ where the image of the holonomy action $\pi_1(X)\to \cale(X)$ lies in $G$. Here, $\aut_G^*(X)\subset\aut^*(X)$ is the submonoid of pointed homotopy  equivalences whose (free) homotopy classes lie in $G$.

 Analogously, given $\pi\subset \cale^*(X)$ a subgroup of pointed homotopy classes of pointed self equivalences, there is a fibration sequence  of pointed spaces endowed with a homotopy section (pointed fibration from now on)
 $$
 X\longrightarrow Z\longrightarrow B\aut_\pi^*(X)
 $$
  which classifies pointed fibrations $X\to E\to B$ where the image of the holonomy action, this time understood as $\pi_1(X)\to \cale^*(X)$, lies in $\pi$.

 We now assume that $X$ is nilpotent and $G\subset\cale(X)$  acts nilpotently on $H_*(X)$. Take, in $\catcdgl$, the minimal model $L$  of $X$. Then, there is a subgroup $\calge$  of homotopy classes of cdgl automorphisms of $L$ together with an action of $H_0(L)$, endowed with the group structure given by the Baker-Campbell-Hausdorff (BCH) product, such that $ \calge/H_0(L)$ is isomorphic to $G_\bq$, the rationalization of $G$. Consider $\derge L$ the connected sub differential graded Lie algebra of $\derr L$, the derivations of $L$, formed by the $\calge$-derivations of $L$. These are all derivations of positive degrees and, in degree $0$, those derivations $\theta$ whose exponential $e^\theta$ is an automorphism of $L$ in $\calge$. Then, we prove (see \S\ref{princi} for a thorough and precise preamble of this result):

 \begin{theorem}\label{mainre1intro} The rational homotopy type of the classifying fibration sequence
 $$
 X\longrightarrow B\aut_G^*(X)\longrightarrow B\aut_G(X)
 $$
 is modeled by the cdgl fibration sequence
 $$
 L\stackrel{\ad}{\longrightarrow}\derge L\longrightarrow \derge L\timest sL.
 $$
 \end{theorem}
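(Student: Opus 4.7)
The plan is to verify, in $\catcdgl$, that the sequence $L \xrightarrow{\ad} \derge L \to \derge L \timest sL$ is a fibration sequence in the sense defined above, and then separately to identify the realization $\langle\,\cdot\,\rangle$ of each term with the corresponding space in the classifying fibration.

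To identify $\langle \derge L \rangle$ with $B\aut^*_G(X)_\bq$, I would extend Tanr\'e's classical model, which says that $\Der^+ L$ is a cdgl model of $B\aut^*_1(X)_\bq$, to the non-identity components. The cdgl $\derge L$ is built so that its degree-zero part consists precisely of those derivations $\theta$ whose BCH exponential $e^\theta$ is an automorphism of $L$ lying in $\calge$. Together with the identification $\calge/H_0(L) \cong G_\bq$ established earlier in the paper, this gives $\pi_0 \langle \derge L \rangle \cong G_\bq$, while on each component the positive-degree derivations recover the connected Tanr\'e model. The identification $\langle \derge L \timest sL \rangle \simeq B\aut_G(X)_\bq$ should follow by a parallel argument, the role of the extra factor $sL$ being to supply, via suspension, the Lie-algebraic generators responsible for free (as opposed to pointed) equivalences; topologically this corresponds to the evaluation $\aut_G^*(X) \to \aut_G(X) \to X$ promoted to classifying spaces.

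For the morphisms, the fibre inclusion $X \to B\aut^*_G(X)$ is modelled by $\ad: L \to \derge L$: the exponential of an inner derivation $\ad x$ is an inner automorphism of $L$, and topologically this corresponds to basepoint translation, realizing precisely the fibre inclusion in the universal classifying fibration. The inclusion $\derge L \hookrightarrow \derge L \timest sL$ then realizes to $B\aut^*_G(X) \to B\aut_G(X)$, and one must check that the composition $L \to \derge L \timest sL$ is null-homotopic and that the induced map on the homotopy fibre is a weak equivalence in $\catcdgl$, so that the whole sequence is a cdgl fibration whose realization is the topological one.

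The main obstacle I expect lies in the non-connected bookkeeping. One must simultaneously establish that $\pi_0 \langle \derge L \rangle \cong G_\bq$, that the same holds for $\langle \derge L \timest sL \rangle$, and that the maps in the cdgl sequence respect these identifications under the exponential. All three depend crucially on the nilpotent action hypothesis on $G$, which ensures that BCH converges on $H_0(L)$ and makes $\theta \mapsto e^\theta$ a well-defined map from $(\derge L)_0$ to $\calge$ whose quotient by the inner automorphisms implements the claimed bijection with $G_\bq$.
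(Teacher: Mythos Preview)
Your plan has a genuine gap: you propose to identify $\langle \derge L\rangle$ and $\langle \derge L\timest sL\rangle$ with $B\aut^*_G(X)_\bq$ and $B\aut_G(X)_\bq$ \emph{directly}, but no such direct identification is available, and your attempt to sketch one reveals a misunderstanding. The cdgl $\derge L$ is connected (it is concentrated in non-negative degrees by construction), so $\langle \derge L\rangle$ is a connected simplicial set and $\pi_0\langle \derge L\rangle$ is a point, not $G_\bq$. What one actually needs is $\pi_1\langle \derge L\rangle\cong H_0(\derge L)\cong G^*_\bq$ and $\pi_1\langle \derge L\timest sL\rangle\cong H_0(\derge L)/H_0(\ad)\cong G_\bq$; but knowing the fundamental groups is far from identifying the full homotopy types, and ``extending Tanr\'e'' is not a self-standing argument --- Tanr\'e's own proof in the simply connected case already proceeds via the classifying property rather than by direct recognition.

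The paper's route is substantially different and supplies exactly the missing mechanism. One first checks (Lemma~\ref{welldefined}) that the sequence is a well-defined cdgl sequence, which requires the completeness results of \S\ref{comple} and Theorem~\ref{cerrado}. One then shows its realization is a fibration with fibre $X_\bq$ whose holonomy lands in $G_\bq$ (via Remark~\ref{remi}, using $\exp(\derge_0 L)=\aut_\calge(L)$), so by universality there is a comparison map to the classifying fibration. The heart of the proof is to show this comparison is an equivalence: for that the paper introduces the auxiliary cdgl $\Hom(\quic(L),L)\timest\derge L$, perturbs at the universal MC element $q=\overline{\id}_L$, proves $(\Hom(\quic(L),L)\timest\derge L)^q\simeq L$ (Lemma~\ref{elespaciototal}), and identifies the resulting homotopy fibre of $\langle L\rangle\to\langle\derge L\rangle$ with the evaluation fibration $\aut_{G_\bq}(X_\bq)\to X_\bq$ (Lemma~\ref{lafibra}, Proposition~\ref{propoimportante}). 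This extends the comparison diagram one step to the left by a known equivalence, forcing the remaining vertical maps to be equivalences. Your outline contains none of this machinery, and without it there is no way to pin down the homotopy type of $\langle\derge L\rangle$ beyond its fundamental group.
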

In fact, see Corollary \ref{corofin}, this  cdgl sequence is part of a larger one
$$
\Hom(\overline\quic(L),L)\to\Hom(\quic(L),L)\to L\stackrel{\ad}{\to}\derge L\to\derge L\timest sL,
$$
whose suitable  restriction  to certain components of $\Hom(\quic(L),L)$ and $\Hom(\overline\quic(L),L)$ models the fibration sequence
$$
\aut_{G_\bq}^*(X_\bq)\to\aut_{G_\bq}(X_\bq)\stackrel{\ev}{\to} X_\bq \to B\aut_{G_\bq}^*(X_\bq)\to  B\aut_{G_\bq}(X_\bq)
$$

Whenever $X$ is simply connected, choosing  universal covers in Theorem \ref{mainre1intro} recovers the classical result mentioned above as the pair (\ref{pairintro}) extends, in the homotopy categories, the classical Quillen model and realization functors \cite{qui} in the simply connected setting.

On the other hand, let $\pi\subset \cale^*(X)$ acting nilpotently on $\pi_*(X)$. Then, there is a subgroup $\Pi$ of homotopy classes of automorphisms of $L$ naturally isomorphic to $\pi_\bq$. As before, let $\derpi L$ be the connected dgl of $\Pi$-derivations of $L$. Then (see again \S\ref{princi} for a detailed statement):

 \begin{theorem}\label{mainre2intro} The rational homotopy type of the pointed classifying fibration sequence
 $$
 X\longrightarrow Z\longrightarrow B\aut_\pi^*(X)
 $$
 is modeled by the cdgl fibration sequence
 $$
 L{\longrightarrow}L\timest \derpi L\longrightarrow \derpi L.
 $$
 \end{theorem}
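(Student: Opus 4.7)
The plan is to mirror the argument for Theorem \ref{mainre1intro}, exploiting that the pointed setting is formally simpler: the absence of the shifted summand $sL$ in the base reflects the fact that pointed self equivalences fix the base point of $X$, so no translational $X$-factor appears. Throughout, I would work with the realization--model adjunction (\ref{pairintro}) and the preparatory results of \S\ref{princi} identifying $\Pi$ with the rationalization $\pi_\bq$ of $\pi$.

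First, I would check that the sequence
$$L \longrightarrow L \timest \derpi L \longrightarrow \derpi L$$
is a cdgl fibration sequence. By construction of the semi-direct product associated to the action of $\derpi L$ on $L$ by derivations, $L$ sits as a cdgl ideal with quotient $\derpi L$, and the projection is split by the canonical inclusion $\derpi L \hookrightarrow L \timest \derpi L$. In particular this is a short exact split sequence in $\catcdgl$, and hence a fibration sequence in the sense used in this paper.

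Second, I would apply the realization functor $\langle \cdot \rangle$ and identify each term with its topological counterpart. The fiber $\langle L \rangle$ is $X_\bq$ because $L$ is a cdgl model of $X$. For the base, the key input from \S\ref{princi} is that the set of path components of $\langle \derpi L \rangle$ is in bijection with $\Pi \cong \pi_\bq$, and each component is weakly equivalent to the rational classifying space $B\aut^*_1(X)_\bq$ of the connected component of the identity. Combining these pieces yields $\langle \derpi L \rangle \simeq B\aut_\pi^*(X)_\bq$. The total space identification follows from the splitting: the cdgl section realizes to a simplicial section of $\langle L \timest \derpi L \rangle \to \langle \derpi L \rangle$, so $\langle L \timest \derpi L \rangle$ is a pointed fibration over $\langle \derpi L \rangle$ with fiber $X_\bq$ and with holonomy induced by the action of $\derpi L$ on $L$.

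Third, I would verify that the holonomy of the realized fibration agrees with the geometric one. On fundamental groups, both holonomy actions amount to $\Pi$ acting on $\pi_*(X_\bq)$ through the prescribed subgroup $\pi \subset \cale^*(X)$; here the nilpotence hypothesis on the $\pi$-action on $\pi_*(X)$ is used to guarantee that this action is faithfully modeled on $L$ and that rationalization commutes with the formation of the classifying pointed fibration. With matching fiber, base, section and holonomy representation, a standard comparison (for instance the five-lemma on homotopy groups, or the uniqueness of pointed $X_\bq$-fibrations with prescribed holonomy action) yields the required rational weak equivalence with $X \to Z \to B\aut_\pi^*(X)$.

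The principal obstacle will be the base identification: proving that $\langle \derpi L \rangle$ has the correct set of components $\Pi$ and that each component is rationally $B\aut^*_1(X)$ requires a careful extension of the classical Tanré-type description of $B\aut^*_1$ to the non-simply-connected nilpotent setting, and a precise translation between Maurer--Cartan elements of $\derpi L$ and pointed homotopy classes of self equivalences lying in $\pi$. Once the preparatory material of \S\ref{princi} delivers this dictionary, the semi-direct product structure of $L \timest \derpi L$ matches the Borel-type construction describing $Z$ essentially tautologically, and only naturality of the action needs to be checked.
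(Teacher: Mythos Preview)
Your proposal contains a factual error that undermines the base identification. The cdgl $\derpi L$ is \emph{connected} (by construction it lives in non-negative degrees), so $\langle\derpi L\rangle$ is a reduced simplicial set with a single $0$-simplex; it does not have $|\Pi|$ path components. What is true is that $\pi_1\langle\derpi L\rangle\cong H_0(\derpi L)\cong\Pi\cong\pi_\bq$ (this is exactly Corollary \ref{corounointro}). So your description of $\langle\derpi L\rangle$ as ``having components indexed by $\Pi$, each equivalent to $B\aut_1^*(X)_\bq$'' is not how the identification with $B\aut_{\pi_\bq}^*(X_\bq)$ works; the latter is connected and nilpotent, with fundamental group $\pi_\bq$.

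Beyond this, your overall strategy diverges from the paper's and leaves the crucial step unproved. The paper does \emph{not} redo the comparison argument for the pointed case; instead it reduces Theorem \ref{mainre2} to Theorem \ref{mainre1}. Concretely: since $\pi_\bq$ acts nilpotently on $H_*(X_\bq)$ (via Hilton), Theorem \ref{mainre1} already identifies $\langle\derpi L\rangle\simeq B\aut_{\pi_\bq}^*(X_\bq)$ and $\langle\derpi L\timest sL\rangle\simeq B\aut_{\zeta(\pi_\bq)}(X_\bq)$. The paper then invokes May's description of the universal pointed fibration: $Z_\bq\to B\aut_{\pi_\bq}^*(X_\bq)$ sits in a homotopy pullback square over the classifying fibration $B\aut_{\pi_\bq}^*(X_\bq)\to B\aut_{\zeta(\pi_\bq)}(X_\bq)$. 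Since realization preserves homotopy limits, one computes the corresponding cdgl pullback of $\derpi L\to\derpi L\timest sL\leftarrow\derpi L$, and the factorization (\ref{hopull}) of $\derpi L\hookrightarrow\derpi L\timest sL$ yields precisely $L\timest\derpi L$ as the pullback. This bypasses the need to verify directly that the classifying map of your realized fibration is an equivalence---which is exactly the step you leave to a ``standard comparison'' that, in this non-simply-connected setting, is the entire content of the theorem.
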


Under the assumptions and notation of the above results, immediate consequences are:

\begin{corollary}\label{corounointro}$\pi_\bq\cong H_0(\derpi L)$ and $G_\bq\cong H_0(\derge L)/ \im H_0(\ad)$.
\end{corollary}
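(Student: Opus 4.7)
The plan is to read both isomorphisms off the realization of the cdgl fibration sequences provided by Theorems \ref{mainre1intro} and \ref{mainre2intro}, invoking the standard identification, for a connected cdgl $K$, of the fundamental group $\pi_1(\langle K\rangle)$ with $H_0(K)$ equipped with the Baker--Campbell--Hausdorff product.

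The first statement $\pi_\bq\cong H_0(\derpi L)$ is almost immediate. By definition of the classifying space of the monoid $\aut_\pi^*(X)$, its fundamental group equals $\pi_0(\aut_\pi^*(X))=\pi$, and because $\pi$ acts nilpotently on $\pi_*(X)$ the $\bq$-rationalization returns precisely $\pi_\bq$. Theorem \ref{mainre2intro} gives a rational model $\langle\derpi L\rangle\simeq_\bq B\aut_\pi^*(X)$, so
\begin{equation*}
\pi_\bq\;\cong\;\pi_1\bigl(B\aut_\pi^*(X)\bigr)_\bq\;\cong\;\pi_1\bigl(\langle\derpi L\rangle\bigr)\;\cong\;H_0(\derpi L),
\end{equation*}
as groups under BCH.

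For $G_\bq\cong H_0(\derge L)/H_0(\ad)$, apply $\langle\cdot\rangle$ to the cdgl fibration $L\stackrel{\ad}{\to}\derge L\to\derge L\timest sL$ of Theorem \ref{mainre1intro}; its realization is, up to rational equivalence, the classifying fibration $X\to B\aut_G^*(X)\to B\aut_G(X)$. Since $X$ is connected, $\pi_0(X_\bq)=0$, and the tail of the long exact homotopy sequence reads
\begin{equation*}
\pi_1(X_\bq)\longrightarrow\pi_1\bigl(B\aut_G^*(X)\bigr)_\bq\longrightarrow\pi_1\bigl(B\aut_G(X)\bigr)_\bq\longrightarrow 0,
\end{equation*}
presenting $G_\bq$ as the cokernel of the first arrow. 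Under the cdgl model, that arrow is $H_0(\ad)\colon H_0(L)\to H_0(\derge L)$, whose image is $H_0(\ad)$ by definition. This gives $G_\bq\cong H_0(\derge L)/H_0(\ad)$.

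The delicate point is ensuring that the resulting bijections are group isomorphisms. Here $\pi_1(\langle\cdot\rangle)$ carries the BCH product on $H_0$, the map $\ad$ is a morphism of cdgl's so $H_0(\ad)$ is a BCH homomorphism, and its image, consisting of inner classes, is automatically normal in $H_0(\derge L)$, matching the cokernel appearing in the long exact sequence. The compatibility of rationalization with the group structures of $\pi$ and $G$, guaranteed by the nilpotence hypotheses on their actions on $\pi_*(X)$ and $H_*(X)$, is what allows us to pass from sets of homotopy classes to honest group isomorphisms.
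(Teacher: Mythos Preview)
Your proof is correct and close to the paper's own argument. The first isomorphism is handled identically. For the second, the paper does not pass through the long exact homotopy sequence: instead it invokes directly that $G_\bq\cong\pi_1\langle\derge L\timest sL\rangle\cong H_0(\derge L\timest sL)$ and then computes the latter algebraically from the twisted differential $Dsx=-sdx+\ad_x$ (so that the degree~$0$ boundaries in $\derge L\timest sL$ are exactly $D(\derr_1 L)+\ad(L_0)$), yielding $H_0(\derge L\timest sL)=H_0(\derge L)/\im H_0(\ad)$. Your long exact sequence argument is the homotopy-theoretic shadow of this same computation and is equally valid; it has the mild advantage of making the normality of $\im H_0(\ad)$ transparent, while the paper's direct calculation avoids invoking exactness at the group level altogether.
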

In both cases $H_0$ is considered as a group with the BCH product.

\begin{corollary}\label{corointrodos} $BG$ and $B\pi$ have the rational homotopy type of the realization of the cdgl's $\derge_0 L\oplus R$ and $\derpi_0 L\oplus S$ respectively.
\end{corollary}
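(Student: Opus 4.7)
The plan is to deduce Corollary \ref{corointrodos} directly from Theorems \ref{mainre1intro} and \ref{mainre2intro}, together with the observation that $BG$ and $B\pi$ are Eilenberg--MacLane spaces $K(G,1)$ and $K(\pi,1)$, since $G \subset \cale(X)$ and $\pi \subset \cale^*(X)$ are discrete groups. Passing to the group of components of the automorphism monoids, the inclusion of the identity component yields fibrations
\[
B\aut_1(X)\to B\aut_G(X)\to BG\esp{and} B\aut_1^*(X)\to B\aut_\pi^*(X)\to B\pi,
\]
so that $BG$ and $B\pi$ are the first Postnikov sections (the $K(\pi_1,1)$-truncations) of $B\aut_G(X)$ and $B\aut_\pi^*(X)$ respectively.

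By Theorem \ref{mainre1intro}, $\derge L\timest sL$ is a cdgl model of $B\aut_G(X)$, and by Theorem \ref{mainre2intro}, $\derpi L$ is a cdgl model of $B\aut_\pi^*(X)$. The strategy is therefore to model the passage to the first Postnikov section at the cdgl level, i.e.\ to produce a cdgl concentrated in degree $0$ and quasi-isomorphic to the respective $\pi_1$-truncation. For such a cdgl $M$, the realization $\langle M\rangle$ is a $K(M,1)$ whose fundamental group is the Lie algebra $M$ with the Baker--Campbell--Hausdorff product, and Corollary \ref{corounointro} identifies these groups with $G_\bq$ and $\pi_\bq$, respectively.

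The required cdgl's $\derge_0 L\oplus R$ and $\derpi_0 L\oplus S$ are assembled as follows: the degree $0$ piece of each derivation algebra is already a graded Lie subalgebra under the bracket; one then adjoins a summand $R$ (respectively $S$) that both absorbs the boundaries coming from $\derge_1 L$ (resp.\ $\derpi_1 L$) and, in the first case, records the quotient by the image of $H_0(\ad)$ prescribed by Corollary \ref{corounointro}. Once $R$ and $S$ are correctly chosen, verifying that the result is a cdgl quasi-isomorphic to the desired $1$-truncation of $\derge L\timest sL$ and $\derpi L$ is routine, and the realization functor automatically produces a $K(\cdot,1)$ whose fundamental group is the Malcev $\bq$-completion of $G$ (respectively $\pi$).

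The main obstacle will be precisely that choice of $R$ and $S$: they must be defined so that (i) the total differential lands in the correct summand, (ii) the Lie bracket extends compatibly, and (iii) the resulting $H_0$ agrees with the prescribed quotient $H_0(\derge L)/H_0(\ad)\cong G_\bq$ in the first case and with $H_0(\derpi L)\cong\pi_\bq$ in the second. Once this bookkeeping is completed, applying $\langle\,\cdot\,\rangle$ yields the desired rational homotopy equivalences $BG\simeq\langle\derge_0 L\oplus R\rangle$ and $B\pi\simeq\langle\derpi_0 L\oplus S\rangle$.
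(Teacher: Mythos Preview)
Your overall strategy is the same as the paper's: both recognize that $BG$ and $B\pi$ are the first Postnikov sections of $B\aut_G(X)$ and $B\aut_\pi^*(X)$, and then model this truncation at the cdgl level. However, your execution has a gap precisely where you admit it does: you never specify $R$ and $S$, and your description of what they should accomplish is slightly off.

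You write that $R$ must ``absorb the boundaries coming from $\derge_1 L$'' and separately ``record the quotient by the image of $H_0(\ad)$.'' But these are not two independent requirements to be engineered. The correct choice, and the one used in the paper, is simply: $R$ is a complement of the $1$-cycles in $\derge L\timest sL$ (and $S$ a complement of the $1$-cycles in $\derpi L$). The $sL$ factor in the twisted product already carries $D(sx)=\ad_x$ for $x\in L_0$, so the $\ad$-quotient is built into the degree-$1$ differential of $\derge L\timest sL$; you do not have to impose it by hand. Moreover, $\derge_0 L\oplus R$ is not something you ``assemble'' with a new bracket: it is the quotient cdgl
\[
M/(M_{>1}\oplus Z_1),\qquad M=\derge L\timest sL,
\]
where $Z_1$ denotes the $1$-cycles. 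The paper then invokes a general fact (\cite[Proposition 12.43]{bfmt0}): for any connected cdgl $M$, the realization of $M\to M/(M_{>n}\oplus Z_n)$ is the fibration of $\langle M\rangle$ over its $n$th Postnikov stage. Taking $n=1$ finishes the argument in one line; no bookkeeping about brackets or differentials is needed because the quotient inherits everything automatically.
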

Here, $R$ and $S$ denote a complement of the $1$-cycles of $\derge L\timest sL$ and $\derpi L$ respectively.

\medskip

Along the way to the above results we must translate to $\catcdgl$  the needed geometric phenomena in each situation, mostly arising from the non triviality of the fundamental group.  Distinguishing free and pointed homotopy classes from the cdgl point of view, or describing evaluation fibrations as cdgl's morphisms of derivations, are examples of this  conveyance which enlarges the dictionary between the homotopy categories of simplicial sets and cdgl's. This is precisely and briefly outlined in the following  summary of the content.

   In \S\ref{prelimi} we give a short digest of the main facts concerning the homotopy theory of cdgl's and its connection with that of simplicial sets by means of the pair (\ref{pairintro}). From this point on, we refer to this part for general notation in this matter.

   In the second section we describe the subtle but important difference between free and pointed homotopy classes in $\catcdgl$: given cdgl's $L$ and $L'$, the group $H_0(L)$ endowed with the BCH product acts on the set of cdgl homotopy classes $\{L',L\}$ by means of the exponential map $[x]\bullet[\varphi]=e^{\ad_x}\circ \varphi$. Whenever $L$ is connected and  $L'$ is the minimal model of the connected simplicial set $X$, $\{L',L\}$ is in bijective correspondence with the set of pointed homotopy classes $\{X,\langle L\rangle\}^*$ (Corollary \ref{corouno}) and the mentioned action corresponds to that of  $\pi_1(X)$ on  $\{X,\langle L\rangle\}^*$ (Theorem \ref{modeloaccion}). That is, $\{X,\langle L\rangle\}\cong\{L',L\}/H_0(L)$.

   Again by the presence of non trivial fundamental groups, we need to analyze distinguished subgroups of degree $0$ elements of particular cdgl's, as always endowed with the BCH product. To this end we recall  in Section \ref{cs} the isomorphism between the category of (ungraded) complete Lie algebras and (Malcev) complete groups and prove  in Theorem \ref{cerrado} a useful characterization of these groups. Also in this section we recall the formal  definition of the logarithm and exponential for a given (ungraded) complete Lie algebra. Via the exponential we observe that, whenever $M$ is a certain sub lie algebra of $\derr_0 L$ for some cdgl $L$, then the group $(M,*)$ can be naturally identified with a subgroup of $\aut (L)$.

   In  \S\ref{evalufibra} we describe cdgl models of evaluation fibrations $\map^*(X,Y)\to\map(X,Y)\to Y$ (Theorem \ref{modelohom}) and any of its components (Proposition \ref{escapa}) by refining existing Lie models of mapping spaces. Later on, in Section \ref{deriva} we transform these models in terms of derivations of the models of the involved simplicial sets (Theorem \ref{mappingder} and Corollary \ref{cor4}). Of special interest is the description of the long homotopy exact sequence of the evaluation fibration as the long  homology sequence of a short exact sequence of chain complexes of derivations (Theorem \ref{corhom}).

   Although $\catcdgl$ is complete and cocomplete from the categorical point of view, essential manipulations of cdgl's may lie outside of this category. The twisted product $L\timest M$ of cdgl's, or even the dgl $\derr L$ of derivations of a given Lie algebra are examples of this malfunction, as they fail to be complete even if $L$ and $M$ are. In \S\ref{comple} we fix these issues for distinguished sub dgl's of $\derr L$ (Proposition \ref{derkacomp}) and the particular twisted products we use (Propositions \ref{homcomp}, \ref{twistcomp} and \ref{excusa}).

    In Section \ref{princi} we present a  detailed and precise statement of our main results, together with their first consequences.

   Section \ref{demostra} contains the proofs of Theorems \ref{mainre1intro} and \ref{mainre2intro} and finally, in \S\ref{apli}, we present several consequences of these results, of which Corollary \ref{corounointro} is an immediate one.  Also (see Theorem \ref{propopos} and Propositions \ref{recubre1}, \ref{recubre2}), we model the rational homotopy type of the fibration sequences,
   $$
   \aut_G(X)\to G\to B\aut_1(X)\to B\aut_G(X)\to BG
    $$
    and
    $$\aut_\pi^*(X)\to \pi\to B\aut_1^*(X)\to B\aut_\pi^*(X)\to B\pi,
   $$
obtaining in particular  Corollary \ref{corointrodos}. We finish the section with two sets of examples covering a wide range: every finitely generated  rational nilpotent group is first easily realized as a subgroup of self homotopy equivalences acting nilpotently either in the (rational) homology or homotopy groups of some suitable complexes. We then apply our main results to find explicit Lie models of derivations for the corresponding classifying spaces.

\bigskip

\noindent{\bf Acknowledgement.}  We  thank Alexander Berglund for helpful conversations. We are also extremely grateful to the referee. His/her  numerous suggestions and corrections  have prevented some flaws in this article, while greatly improving it.

\section{Homotopy theory of complete Lie algebras}\label{prelimi}
In this section we recall the basics for complete differential graded Lie algebras, from the homotopy point of view. For it, original references are \cite{bfmt4,bfmt2,bfmt3,bfmt1} whose main results are developed in the complete and detailed monograph \cite{bfmt0}. Sometimes we will also use classical facts from the Sullivan commutative approach to rational homotopy theory. These are not included here and we refer to  \cite{FHT, FHTII} as standard references.

For any category $\cC$, we denote by $\Hom_\cC$ its morphisms, except for that of (graded) vector spaces whose morphisms  will be denoted by the unadorned $\Hom$.

All considered chain (or cochain) complexes, with possibly extra structures, are unbounded and have $\bq$ as ground field. The {\em suspension}  of  such a complex $V$ is denoted by $sV$ where $(sV)_n=V_{n-1}$, $n\in\bz$ and $dsv=-sdv$. The {\em desuspension} complex is denoted by $\des V$.

We denote by $\catss$ the category of simplicial sets. For any $n\ge 0$, we denote by  $\underline\Delta^n=\{\underline\Delta^n_p\}_{p\ge 0}$\label{gunderlineDelta^n} the simplicial set in which $
\underline\Delta^n_p=\{(j_0,\dots,j_p)\mid 0\le j_0\le \dots\le j_p\leq n\}
$ with the usual faces and degeneracies. Given $X$ and $Y$ simplicial sets we denote by $\map(X,Y)$ the {\em simplicial mapping space}, that is,
$$
\map(X,Y)_\bullet=\Hom_{\catss}(X\times\underline\Delta^\bullet,Y).
$$
Analogously, we denote by $\map^*(X,Y)$ the {\em pointed mapping space} $\Hom_{\catss^*}(X\times\underline\Delta^\bullet,Y)$ in the category $\catss^*$ of pointed simplicial sets.

Let $\catdgl$ denote the category of  differential graded Lie algebras (dgl's henceforth). A dgl $L$, or $(L,d)$ if we want to specify the differential, is  {\em $n$-connected}, for $n\ge 0$, if $L=L_{\ge n}$. As usual, connected means $0$-connected.  The $n$-connected cover of a dgl $L$ is the $n$-connected sub dgl ${ L}^{(n)}$ given by
$$
L^{(n)}_p=\begin{cases} \ker d&\text{if $p=n$},\\ \,\,\,L_p&\text{if $p>n$}.\end{cases}
$$
We denote by $\widetilde L=L^{(1)}$ the $1$-connected cover.

 Given a dgl $L$ we denote by $\derr L$ the  dgl of its {\em derivations} in which, for each $n\in\bz$, $\derr_n L$ are linear maps $\theta\colon L\to L$ of degree $n$  such that
$$
\theta[a,b]=[\theta(a),b]+(-1)^{|a|n}[a,\theta(b)],\esp{for} a,b\in L.
$$
The Lie bracket and differential are given by,
$$
[\theta,\eta]=\theta\circ\eta-(-1)^{|\theta||\eta|}\eta\circ\theta,
\quad
D\theta=d\circ\theta-(-1)^{|\theta|}\theta\circ d.
$$
On the other hand, given a dgl morphism $f\colon L\to M$ we denote by $\derr_f(L,M)$ the chain complex of {\em $f$-derivations} in which $\derr_f(L,M)_n$ are linear maps $\theta\colon L\to M$ of degree $n$ such that
$$\theta [a,b] = [\theta (a), f(b)] + (-1)^{n  \vert a \vert} [f(a), \theta (b)],\quad a,b\in L.$$
The differential is given as before.

Following \cite[\S7.2]{tan}, or more generally \cite[\S3.5]{ber2}, a {\em twisted product} $L\timest M$ of the dgl's $L$ and $M$ is a dgl structure on the underlying vector space  $L\times M$ so that
$$
0\to L\longrightarrow L\timest M{\longrightarrow} M\to 0
$$
is an exact dgl sequence. In particular $L$ is a sub dgl of the twisted product.

A {\em filtration} of a dgl $L$  is a decreasing sequence of differential Lie ideals,
$$L=F^1\supset\dots \supset F^n\supset F^{n+1}\supset\dots$$ such that $[F^p,F^q]\subset F^{p+q}$ for $p,q\geq 1$. In particular, if
$$
L^1\supset\dots\supset L^n\supset L^{n+1}\supset\dots
$$
denote the lower central series of $L$, that is,  $L^{1}= L$ and $L^{n}= [L, L^{n-1}]$ for $n>1$, then $L^n \subset F^n$ for all $n$.

A {\em complete differential graded Lie algebra}, cdgl henceforth,  is a dgl $L$ equipped with a  filtration $\{F^n\}_{n\ge 1}$ for which the  natural map
$$
L\stackrel{\cong}{\longrightarrow}\varprojlim_n L/F^n
$$
is a dgl isomorphism. A {\em morphism} $f\colon L\to L'$ between cdgl's, associated to filtrations $\{F^n\}_{n\ge 1}$ and $\{{G}^n\}_{n\ge 1}$ respectively, is a dgl morphism which preserves the filtrations, that is, $f(F^n)\subset G^n$ for each $n\ge 1$.
We denote by $\catcdgl$ the corresponding category which is complete and cocomplete \cite[Proposition 3.5]{bfmt0}.

Given a  dgl  $L$  filtered by $\{F^n\}_{n\ge 1}$, its {\em completion} is the dgl
$$
\widehat L=\varprojlim_nL/F^{n}.
$$
which is always complete with respect to the filtration
$$
\widehat{F}^n=\ker ( \widehat L \to L/F^n)
$$
as $ \widehat L/\widehat{F}^n=L/F^n$. Unless a particular filtration is explicitly mentioned, the completion of a given dgl is  taken over the lower central series. This is the case, in particular, for any cdgl $L=(\libc(V),d)$ where
$$
\libc(V)=\varprojlim_n\lib(V)/\lib(V)^n$$
is the completion of the free Lie algebra $\lib(V)$ generated by the graded vector space $V$, see \cite[\S3.2]{bfmt0} for an exhaustive treatment of this cdgl. In this instance,
 $$\widehat  F^n=
 \widehat{\mathbb L}^{\geq n} (V)=  \prod_{q\geq n} \mathbb L^q(V),\esp{for} n\ge 1.
 $$
By an abuse of notation, we write $\widehat F^n=L^n$ so that $L=\varprojlim_n L/L^n$.

If $A$ is a commutative differential graded algebra (cdga henceforth) and $L$ is a cdgl with respect to the filtration $\{F^n\}_{n\ge 1}$, we define their {\em complete tensor product} as the cdgl
$$
A\otimesc L=\varprojlim_n A\otimes (L/F^n)
$$
where the differential and the bracket in $A\otimes (L/F^n)$ are defined as usual by $d(a\otimes x) = da\otimes x + (-1)^{\vert a\vert} a\otimes dx$ and $[a\otimes x, a'\otimes x'] =  (-1)^{\vert a'\vert   \vert x\vert} a a' \otimes [x,x']$.

A {\em Maurer-Cartan} element, or simply MC element, of a given dgl $L$ is an element $a\in L_{-1}$ satisfying the Maurer-Cartan equation $da=-\frac{1}{2}[a,a]$. We denote by $\mc(L)$ the set of MC elements in $L$. Given $a\in \mc(L)$, we denote by $d_a= d+\ad_a$ the {\em perturbed differential}. This is a new differential on $L$ where $d$ is the original one and $\ad$ is the usual adjoint operator. The {\em component} of $L$ at $a$ is the connected sub dgl $L^a$ of $(L,d_a)$ given by
$$
L^a_p=\begin{cases} \ker d_a&\text{if $p=0$},\\ \,\,\,L_p&\text{if $p>0$}.\end{cases}
$$
In other terms, $L^a=(L,d_a)^0$.
 Whenever $L$ is complete the group $L_0$, endowed  with the Baker-Campbell-Hausdorff product, acts  on the set  $\mc(L)$ by
$$
x\,\cG\, a=e^{\ad_x}(a)-\frac{e^{\ad_x}-1}{\ad_x}(d x)= \sum_{i\geq 0} \frac{\ad_x^i(a)}{i!} - \sum_{i\geq 0} \frac{\ad_x^i(dx)}{(i+1)!},\quad x\in L_0,\,a\in\mc(L).
$$
This is the {\em gauge} action and we denote by $\widetilde\mc(L)=\mc(L)/\cG$ the corresponding orbit set. This is thoroughly studied in  \cite[\S4.3]{bfmt0} and a particular homotopically friendly description of the gauge action can be found in  \cite[\S5.3]{bfmt0}.

There is a pair of adjoint functors, {\em (global) model} and {\em realization},
\begin{equation}\label{pair}
\xymatrix{ \catss& \catcdgl \ar@<1ex>[l]^(.50){\langle\,\cdot\,\rangle}
\ar@<1ex>[l];[]^(.50){\lasu}\\},
\end{equation}
which are deeply studied in \cite[Chapter 7]{bfmt0} and are based in the cosimplicial cdgl
$$\lasu_\bullet=\{\lasu_n\}_{n\ge 0}.
$$
 For each $n\ge 0$, see \cite[Chapter 6]{bfmt0},
$$
\lasu_n=\bigl(\libc(s^{-1}\Delta^{n}),d)
$$
where
$s^{-1}\Delta^{n}$  denotes the desuspension $s^{-1}N_*(\underline\Delta^{n})$ of the non-degenerate simplicial chains  on $\underline\Delta^n$. That is, for any $p\ge 0$, a generator of  degree $p-1$ of $s^{-1}\Delta^n$ can be written as $a_{i_0\dots i_p}$ with $0\le i_0<\dots<i_p\le n$. The cofaces and codegeneracies in $\lasu_\bullet$ are induced by those on the cosimplicial chain complex $\des N_*\underline\Delta^\bullet$, and  the differential $d$ on each $\lasu_n$ is the only one (up to cdgl isomorphism) satisfying:

 \begin{itemize}
 \item[(1)] For each $i=0,\dots,n$, the generators of $s^{-1}\Delta^{n}$, corresponding to vertices, are MC elements.

     \item[(2)] The linear part of $d$ is induced by the boundary operator of $s^{-1}\Delta^{n}$.
     \item[(3)] The cofaces and codegeneracies are cdgl morphisms.
     \end{itemize}
     In particular, $\lasu_0$ is the free Lie algebra $\lib(a)$ generated by a MC element and $\lasu_1$ is the {\em Lawrence-Sullivan interval} \cite{LS} (see also \cite{BM2}).

     The realization of a given cdgl $L$ is the simplicial set
     $$
     \langle L\rangle=\Hom_{\catcdgl}(\lasu_\bullet,L).
     $$
     In particular, the set $\langle L\rangle_0$ of $0$-simplices coincides with $\mc(L)$. Moreover,  see \cite[\S7.2]{bfmt0}, if $\langle L\rangle^a$ denotes  the path component of $\langle L\rangle$ containing $a$, we have:
     \begin{equation}\label{realizasimp}
     \langle L\rangle^a\simeq \langle L^a\rangle,\quad \langle L\rangle\simeq  \amalg_{a\in \widetilde{\mc}(L)} \,\, \langle L^a\rangle.
\end{equation}
      However, it is important to observe that the realization of a cdgl is invariant under perturbations. That is, for any cdgl $L$ and any $a\in \mc(L)$,
  \begin{equation}\label{realizaigual}
     \langle L\rangle\cong\langle (L,d_a)\rangle.
   \end{equation}
   Indeed, by \cite[Proposition 4.28]{bfmt0} there is a bijection $\widetilde\mc(L)\cong\widetilde\mc(L,d_a)$ which sends the gauge class of an MC element $z$ of $L$ to the gauge class of the MC element $z-a$ of $(L,d_a)$. On the other hand, observe that $L^z=(L,d_a)^{a-z}$. Finally, apply (\ref{realizasimp}).

     Also, if $L$ is connected, and for any $n\ge 1$, the map
     $$
     \rho_n\colon \pi_n\langle L\rangle\stackrel{\cong}{\longrightarrow} H_{n-1}(L),\quad \rho[\varphi]=[\varphi(a_{0\dots n})],
     $$
    is a group isomorphism \cite[Theorem 7.18]{bfmt0}. Here, the group structure in $H_0(L)$ is considered with the Baker-Campbell-Hausdorff  product (BCH product henceforth). We also remark that, for each $n\ge 1$, the $n$th connected cover $\langle L\rangle^{(n)}$ of the realization of a connected cdgl is homotopically equivalent to the realization $\langle L^{(n)}\rangle$ of its $n$th connected cover \cite[\S12.5.3]{bfmt0}.

    A final and fundamental property of the realization functor is that it coincides with any known geometrical realization of dgl's: if $L$ is a $1$-connected dgl of finite type  then $\langle L\rangle\simeq \langle L\rangle^Q $, see \cite[Corollary 11.17]{bfmt0}, where $\langle\,\cdot\,\rangle^Q$ stands for the classical Quillen  realization functor \cite{qui}. Furthermore, see \cite[Theorem 0.1]{bfmt4}, \cite[Theorem 3.2]{ni} and \cite[Theorem 11.13]{bfmt0}, for any cdgl $L$, $ \langle L \rangle$ is a strong deformation retract of  $\mc_\bullet(L)$, the Deligne-Getzler-Hinich simplicial realization of $L$ \cite{getz,hi}.

     On the other hand, the global model of a simplicial set $X$ is the cdgl
     $$
     \lasu_X=\varinjlim_{\sigma\in X}\lasu_{|\sigma|}.
     $$
 It can be checked, see \cite[Proposition 7.8]{bfmt0}, that  as complete Lie algebra,   $
\lasu_X=\libc(s^{-1}X)
$ where  $s^{-1}X$ denotes the desuspension $s^{-1}N_*(X)$ of the chain complex of non-degenerate simplicial chains on $X$. Moreover, the differential $d$ on $\lasu_X$  is completely determined by the following:
\begin{itemize}
\item[(1)]
 The  $0$-simplices of $X$ are Maurer-Cartan elements.
\item[(2)]
 The linear part of $d$ is the desuspension of the differential in $N_*(X)$.
\item[(3)]
If $j\colon Y\subset X$ is a subsimplicial set, then
$
\lasu_j=\libc\bigl(s^{-1}N_*(j)\bigr)$.
\end{itemize}

     If $X$ is a simply connected simplicial set of finite type and $a$ is any of its vertices, then \cite[Theorem 10.2]{bfmt0}, $\lasu_X^a$ is quasi-isomorphic to $\lambda (X)$ where $\lambda$ is the classical Quillen dgl model functor \cite{qui}. Moreover, see \cite[Theorem 11.14]{bfmt0}, for any connected simplicial set $X$ of finite type, $
     \langle \lasu_X^a\rangle$ is weakly homotopy equivalent to $\bq_\infty X$ the Bousfield-Kan $\bq$-completion of $X$ \cite{BK}. Recall that, whenever $X$ is nilpotent, its $\bq$-completion coincides, up to homotopy, with its classical rationalization, which we denote by $X_\bq$.

The category $\catcdgl$ has a {\em cofibrantly generated model structure}, see \cite[Chapter 8]{bfmt0}, for which the functors in (\ref{pair}) become a Quillen pair with the classical model structure on $\catss$: a cdgl morphism $f\colon L\to M$ is a fibration if it is surjective in non-negative degrees and is a weak equivalence if $\widetilde\mc(f)\colon\widetilde\mc(L)\stackrel{\cong}{\to} \widetilde\mc(M)$ is a bijection, and $f^a\colon L^a\stackrel{\simeq}{\to} M^a$ is a quasi-isomorphism for every $a\in \mc(L)$.

Although quasi-isomorphisms  are weak equivalences only in the connected case, the {\em generalized Goldman--Millson Theorem}, see  \cite[Theorem 4.33]{bfmt0}, provides a criterion for a quasi-isomorphism to be a weak equivalence in the general case: let $f\colon L\to L'$ be a cdgl morphism   such that map induced by the corresponding filtrations ${F}^n/F^{n+1}\xrightarrow[]{\simeq} {G}^n/G^{n+1}$ is a quasi-isomorphism for  any $n\ge 1$. Then, $f$ is a weak equivalence.

A quasi-isomorphism of connected cdgl's of the form
$$
(\libc(V),d)\stackrel{\simeq}{\longrightarrow} L
$$
makes of  $(\libc(V),d)$ a cofibrant replacement of $L$ and we say that it is a {\em  model of $L$}. If $d$ has no linear term we say that $(\libc(V),d)$ is {\em minimal} and is unique up to cdgl isomorphism. A fundamental object in this theory is:

\begin{definition}\label{minimamodel} Let $X$ be a connected simplicial set and $a$ any of its vertices. The minimal model $(\libc(V),d)$ of  $\lasu_X^a$ is called the {\em minimal model of $X$} \cite[Definition 8.32]{bfmt0}. In the same way, a {\em Lie model of $X$} is a model (non necessarily minimal) of $\lasu_X^a$.
\end{definition}

If $(\libc(V),d)$ is the minimal model of $X$ then, see \cite[Proposition 8.35]{bfmt0}, $sV\cong \widetilde H_*(X;\bq)$ and, provided $X$ of finite type, $sH_*(\libc(V),d)\cong\pi_*(\bq_\infty X)$. Here, the group $H_0(\libc(V),d)$ is again considered  with the BCH product. Furthermore, from all of the above, if $X$ is simply connected, the minimal  model of $X$ is isomorphic to its classical Quillen minimal model.

\bigskip
Let $\catcdgc$ denote the category of cocommutative differential graded coalgebras (cdgc's from now on).  Every cdgc $C$  is always assumed to have a counit $\varepsilon\colon C\to\bq$ and a coaugmentation $\eta\colon\bq\to C$. We write $\overline{C}= \ker\varepsilon$, so that $C\cong\overline C\oplus \bq$. The map $\overline{\Delta}\colon \overline C\to \overline C\otimes\overline C$ is defined by
$
\overline{\Delta}x= \Delta x- (u\otimes x + x\otimes u)
$,
with $u=\eta(1)$. A cdgc $C$ is $n$-connected, for $n\ge 0$, if $\overline C=C_{> n}$.

Recall the classical pair of adjoint functors
\begin{equation}\label{quillenlc}
\xymatrix{
{\rm \bf cdgc} \ar@<0.75ex>[r]^-{\mathscr L} &\mathbf{dgl} \ar@<0.75ex>[l]^(0.49){{\mathscr C} }
}
\end{equation}
defined as follows:

Given $C$ a cdgc,
${\mathscr L}(C)$ is $(\mathbb L(s^{-1}\overline{C}),d)$, the free dgl generated by the desuspension of $\overline C$ and  $d=d_1+d_2$ where
$$
\begin{aligned}
d_1(s^{-1}c) &= -s^{-1}dc,\\
d_2(s^{-1}c) &= \frac{1}{2} \sum_i (-1)^{\vert a_i\vert} [s^{-1}a_i, s^{-1}b_i]\quad\text{with} \quad\overline{\Delta}c = \sum_i a_i\otimes b_i.
\end{aligned}
$$
On the other hand,
${\mathscr C}(L)$ is $(\land (sL),d)$, the free cocommutative coalgebra cogenerated by the suspension of $L$ and
in which $d=d_1+d_2$ where
$$
\begin{aligned}
d_1(sv_1\land \dots \land sv_n)& = -\sum_{i=1}^n(-1)^{n_i} sv_1\land \dots \land s(dv_i)\land \dots \land sv_n,\\
d_2(sv_1\land \dots \land sv_n) &= \sum_{1\leq i<j\leq n} (-1)^{\vert sv_i\vert} \, \rho_{ij}\, s[v_i, v_j]\land sv_1\land \dots  \land \widehat{sv_i}\land \dots \land \widehat{sv_j}\land  \dots \land sv_n,
\end{aligned}
$$
$n_i = \sum_{j<i} \vert sv_j\vert$  and
 $\rho_{ij}$ is the  Koszul sign of the permutation
$$sv_1\land \dots \land sv_n \mapsto sv_i\land sv_j\land sv_1 \land \dots \land \widehat{sv_i}\land \dots \land \widehat{sv_j}\land \dots \land sv_n.$$
In particular,
$d_1(sv) = -sdv$ and $d_2(sv\land sw)= (-1)^{\vert sv\vert} s[v,w]$.

In general, $\quic$ preserves quasi-isomorphisms \cite[Proposition 4.4]{nei}. However, $\quil$ preserves quasi-isomorphisms between $1$-connected cdgc's \cite[Proposition 6.4]{nei} and only between connected fibrant cdgc's of finite type \cite[Proposition 2.4(2)]{bfmt0}.

The \emph{cochain algebra} $\quic^*(L)$ on the dgl $L$ is the cdga dual of  ${\mathscr C}(L)$.
Whenever
$L$ connected and of finite type, this cdga has a well known description:
$$
{\mathscr C}^* (L)\cong(\land (sL)^\#,d)
$$
in which $d=d_1+d_2$ where
$$
\begin{aligned}
\langle d_1v, sx\rangle &= (-1)^{\vert v\vert} \langle v, sdx\rangle,\\
\langle d_2v,sx\land sy\rangle & = (-1)^{\vert y\vert + 1} \langle v, s[x,y]\rangle.
\end{aligned}
$$
Here, $\langle\,\,,\,\rangle\colon \land (sL)^\sharp \times \land sL\to\bq$ denotes the usual pairing, see for instance  \cite[\S23]{FHT}.

The adjunction map
$
 \alpha {\colon} {\mathscr L}{\mathscr C} (L)\to L$ is the unique dgl morphism $(\lib(s^{-1}{\land^+sL}),d)\to L$ extending the projection
$s^{-1}{\land^+sL} \to s^{-1}{\land^+sL} / \left(s^{-1} \land^{\geq 2} sL\right)  \cong L$. The other adjunction  $ \beta {\colon} C\to {\mathscr C}{\mathscr L}(C)$ is the unique cdgc morphism $C\to (\land s\quil(C),d)$ lifting the inclusion
$\overline{C} = ss^{-1}\overline{C} \subset s\mathbb L(s^{-1}\overline{C})$. By  \cite[Proposition 2.3]{bfmt0} $\alpha$ is always a quasi-isomorphism while $\beta$ is a quasi-isomorphism  if $C$ is connected.

\section{Free and pointed homotopy classes in $\catcdgl$}\label{freepoin}

 Throughout the next sections it will be essential to make a clear distinction between the sets of free and pointed homotopy classes of maps, from the cdgl point of view. This is a subtle but crucial point that we settle here. Note that this discussion is meaningless in the classical Quillen theory as, in the simply connected category, both sets coincide.

 Recall from \cite[\S8.3]{bfmt0} that two cdgl morphisms $\varphi,\psi\colon M\to L$ are homotopic if there is a cdgl morphism $\Phi\colon M\to L\otimesc \land(t,dt)$ such that $
\varepsilon_0\circ\Phi=\varphi$ and $\varepsilon_1\circ\Phi=\psi$.
Here, $\land(t,dt)$ is the free commutative graded algebra generated by the element $t$ of degree $0$ and its differential $dt$. Also, for $i=0,1$, $\varepsilon_i\colon  L\otimesc \land(t,dt)\to L$ is the cdgl morphism obtained by sending $t$ to $i$. We denote by $\{M,L\}$ the set of homotopy classes of cdgl morphisms from $M$ to $L$.

 For the rest of this section  we fix a connected cdgl  $L$ and a connected (non necessarily reduced) simplicial set $X$,  pointed by the  $0$-simplex $b\in X_0$.  Analogously, we denote respectively by  $\{X,\langle L\rangle\}$ and $\{X,\langle L\rangle\}^*$ the set of free and pointed homotopy classes of simplicial maps. Note that, since $\langle L\rangle$ is reduced, being $0\in  \mc(L)$ the only $0$-simplex, every  map $X\to \langle L\rangle$ is pointed. Then:

 \begin{proposition}\label{freepointed}
 $
\{X,\langle L\rangle\}\cong\{\lasu_X,L\}$ and  $ \{X,\langle L\rangle\}^*\cong\{\lasu_X/(b),L\}$.
\end{proposition}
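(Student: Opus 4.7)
The plan is to exploit the Quillen adjunction (\ref{pair}) to translate between simplicial and cdgl homotopies, and to use the reducedness of $\langle L\rangle$ to pin down the basepoint condition.

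For the first isomorphism, the adjunction yields a natural bijection $\Hom_\catcdgl(\lasu_X, L) \cong \Hom_\catss(X, \langle L\rangle)$. Applying it to the cdgl $L \otimesc \land(t,dt)$, which serves as the path object defining cdgl homotopies, gives $\Hom_\catcdgl(\lasu_X, L \otimesc \land(t,dt)) \cong \Hom_\catss(X, \langle L \otimesc \land(t,dt)\rangle)$. I would check, appealing to the model-categorical results in \cite[Chapter 8]{bfmt0}, that $\langle L \otimesc \land(t,dt)\rangle$ is a simplicial path object for $\langle L\rangle$, so that cdgl homotopies on the left correspond to simplicial right homotopies on the right. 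Every simplicial set is cofibrant and every cdgl is fibrant (any map to the terminal cdgl $0$ is trivially surjective in non-negative degrees), so right-homotopy classes agree with the usual free simplicial homotopy classes, yielding $\{X, \langle L\rangle\} \cong \{\lasu_X, L\}$.

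For the second isomorphism, the connectedness of $L$ forces $L_{-1}=0$, so $\mc(L)=\{0\}$ and $\langle L\rangle$ is reduced with unique basepoint $0$. In particular every cdgl morphism $\lasu_X \to L$ sends the MC generator $b \in \lasu_X$ (the basepoint) to the only MC element $0 \in L$, and therefore factors uniquely through the quotient cdgl $\lasu_X/(b)$. This produces a natural bijection $\Hom_\catcdgl(\lasu_X, L) \cong \Hom_\catcdgl(\lasu_X/(b), L)$. All simplicial maps $X \to \langle L\rangle$ are automatically pointed in this setting, so the distinction between $\{X,\langle L\rangle\}$ and $\{X,\langle L\rangle\}^*$ lies entirely in the homotopy relation.

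The subtle point is that a cdgl homotopy $\Phi\colon \lasu_X \to L \otimesc \land(t,dt)$ between two morphisms each sending $b$ to $0$ may still send $b$ to a nonzero MC element of $L \otimesc \land(t,dt)$ that restricts to $0$ at both endpoints; such an element represents a loop at the basepoint of $\langle L\rangle$ and corresponds to a free but not necessarily pointed homotopy. A pointed simplicial homotopy, on the other hand, forces $b$ to remain at $0$ throughout, so its adjoint $\Phi$ sends $b$ to $0 \in L \otimesc \land(t,dt)$, equivalently factors through $\lasu_X/(b)$. Combined with the first step this yields $\{X, \langle L\rangle\}^* \cong \{\lasu_X/(b), L\}$. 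The principal obstacle is to verify rigorously that $\langle L \otimesc \land(t,dt)\rangle$ is a bona fide simplicial path object and that cdgl right-homotopies correspond precisely to (free) simplicial homotopies under the adjunction rather than to some weaker relation — a bookkeeping task that relies on the model-categorical machinery of \cite[Chapter 8]{bfmt0}.
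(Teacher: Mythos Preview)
Your proposal is correct and follows essentially the same approach as the paper: both use the adjunction bijection, identify $L\otimesc\land(t,dt)$ with the simplicial path object, and for the pointed case observe that a based homotopy forces $\Phi(b)=0$ so that $\Phi$ descends to $\lasu_X/(b)$. The only presentational difference is that the paper makes the path-object identification concrete via the equivalence $\map(\underline\Delta^1,\langle L\rangle)\simeq\langle L\otimesc\land(t,dt)\rangle$ from \cite[Theorem 12.18]{bfmt0} (equivalently \cite[Theorem 6.6]{Berglund}), whereas you invoke it abstractly; this is exactly the ``bookkeeping'' you flag at the end.
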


Here $(b)$ denotes the Lie ideal generated by the Maurer-Cartan element $b$.

\begin{proof} As the model and realization functors constitute a Quillen pair, the first identity is obvious \cite[Corollary 8.2(iv)]{bfmt0}. Nevertheless, an explicit description of this identity will lead  to the second one:

The adjunction (\ref{pair}) provides a bijection
$$
\Hom_{\catss}(X,\langle L\rangle)\cong \Hom_{\catcdgl}(\lasu_X,L)
$$
which sends any simplicial map $f\colon X\to\langle L\rangle$ to the cdgl morphism  $\varphi_f\colon \lasu_X\to L$ defined as follows: recall that, for $p\ge 0$, a generator of $\lasu_X$ of degree $p-1$ is identified with a non-degenerate $p$-simplex $\sigma\in X_p$. Then, $\varphi_f(\sigma)=f(\sigma)(a_{0\dots p})$, being $a_{0\dots p}\in s^{-1}\Delta^p$ the  top generator. Note that $\varphi_f(b)=0$ so it induces a map $\overline\varphi_f\colon \lasu_X/(b)\to L$.

Now, let  $H\colon X\times \underline\Delta^1\to Y$ be a homotopy between two simplicial maps $f,g\colon X\to \langle L\rangle$. That is, $H$ is a simplicial map for which $H|_{X\times(0)}=f$ and $H|_{X\times(1)}=g$, being $(0)$ and $(1)$  the subsimplicial sets of $\underline\Delta^1$ generated by the $0$-simplices $0$ and $1$ respectively. Taking into account the bijection
$$
\Hom_{\catss}(X\times \underline\Delta^1,\langle L\rangle)\cong\Hom_{\catss}\bigl(X,\map(\underline\Delta^1,\langle L\rangle)\bigr),
$$
$H$ corresponds to a simplicial map $X\to\map(\underline\Delta^1,\langle L\rangle)$. But, see \cite[Theorem 12.18]{bfmt0} which is precisely  \cite[Theorem 6.6]{Berglund} in our context, there is a homotopy equivalence
$$
\map(\underline\Delta^1,\langle L\rangle)\simeq\langle L\otimesc \land(t,dt)\rangle.
$$
 Hence, we identify $H$ with a map $X\to \langle L\otimesc \land(t,dt)\rangle$ which, again by adjunction, corresponds to a cdgl morphism $\Phi\colon \lasu_X\to L\otimesc \land(t,dt)$. The fact that $H$ is a homotopy between $f$ and $g$ translates to $
\varepsilon_0\circ\Phi=\varphi_f$ and $\varepsilon_1\circ\Phi=\varphi_g
$ so that $\Phi$ is a homotopy between $\varphi_f$ and $\varphi_g$. This shows again that $\{X,\langle L\rangle\}\cong\{\lasu_X,L\}$.

 Now, assume that $H$ is a pointed homotopy. That is, $H$ is constant (to the null and only  $0$-simplex of $\langle L\rangle$) on the subsimplicial set $(b)\times\underline\Delta^1$ of $X\times\underline\Delta^1$.  Then,
 a careful inspection along this identification shows that the homotopy $\Phi$ sends $b$ to $0$ (which is not the case in general) and thus it induces a homotopy $\overline\Phi\colon \lasu_X/(b)\to L\otimesc \land(t,dt)$ between $\overline\varphi_f$ and $\overline\varphi_g$. This proves that $ \{X,\langle L\rangle\}^*\cong\{\lasu_X/(b),L\}$.

     \end{proof}

     \begin{corollary}\label{corouno} If $L'$ is the minimal Lie model of $X$, $\{X,\langle L\rangle\}^*\cong\{L',L\}$. In particular, if $L$ is a Lie model of the finite type simplicial set $Y$, $\{X,\bq_\infty Y\}^*\cong\{L',L\}$.
     \end {corollary}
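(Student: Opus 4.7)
The plan is to invoke Proposition \ref{freepointed} to replace the pointed homotopy classes $\{X,\langle L\rangle\}^*$ with $\{\lasu_X/(b),L\}$, and then identify the latter with $\{L',L\}$ via a comparison between $\lasu_X/(b)$ and the connected component $\lasu_X^b$, whose minimal model is $L'$ by Definition \ref{minimamodel}.

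First I would construct the natural bridge cdgl morphism as the composite
$$
\lasu_X^b\hookrightarrow(\lasu_X,d_b)\twoheadrightarrow(\lasu_X,d_b)/(b)=\lasu_X/(b),
$$
where the last equality holds because $\ad_b$ vanishes identically modulo the ideal $(b)$, so $d_b$ and $d$ agree on the quotient. Next, I would observe that any cdgl morphism from $\lasu_X/(b)$ into the connected cdgl $L$ kills every element of strictly negative degree, so it factors through the connected cover of $\lasu_X/(b)$; and on connected covers the displayed map is a quasi-isomorphism. This yields $\{\lasu_X^b,L\}\cong\{\lasu_X/(b),L\}$. Combining with the fact that $L'\xrightarrow{\simeq}\lasu_X^b$ is a cofibrant replacement in $\catcdgl$, together with the fact that every cdgl is fibrant, gives $\{L',L\}\cong\{\lasu_X^b,L\}$. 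Chaining these bijections produces $\{X,\langle L\rangle\}^*\cong\{L',L\}$.

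For the second assertion, when $L$ is a Lie model of the finite type simplicial set $Y$, I would invoke the property recalled in Section \ref{prelimi} that $\langle L\rangle\simeq\bq_\infty Y$, and substitute into the first bijection to obtain $\{X,\bq_\infty Y\}^*\cong\{L',L\}$.

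The main obstacle is rigorously verifying that the comparison $\lasu_X^b\to\lasu_X/(b)$ induces a quasi-isomorphism on connected covers, especially when $X$ is not reduced. In that case $\lasu_X/(b)$ still carries degree $-1$ generators coming from the other $0$-simplices of $X$; these must vanish under any morphism into the connected $L$, and a careful analysis of the perturbed differential and of the resulting connected cover will be required to identify what remains with $\lasu_X^b$.
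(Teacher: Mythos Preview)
Your approach is essentially the same as the paper's: both use Proposition~\ref{freepointed} and then identify $\{\lasu_X/(b),L\}$ with $\{L',L\}$ via the composite $\lasu_X^b\hookrightarrow(\lasu_X,d_b)\twoheadrightarrow\lasu_X/(b)$. The paper, however, does not attempt your connected-cover argument; it simply cites \cite[Proposition~8.7]{bfmt0} to the effect that this composite is not merely a quasi-isomorphism but a \emph{weak equivalence} in $\catcdgl$. Since $L'$ is a cofibrant replacement of $\lasu_X^b$, this immediately yields $\{L',L\}\cong\{\lasu_X/(b),L\}$.

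Your attempted workaround has a genuine confusion that you should fix. You write that a morphism $\lasu_X/(b)\to L$ ``factors through the connected cover of $\lasu_X/(b)$'' because $L$ is connected. But the connected cover $M^{(0)}$ is a \emph{sub}-cdgl, not a quotient, so a map out of $M$ does not ``factor through'' $M^{(0)}$; at best it \emph{restricts} to $M^{(0)}$. What you would actually need is a bijection $\{\lasu_X/(b),L\}\cong\{(\lasu_X/(b))^{(0)},L\}$, and this is not automatic: a morphism $M^{(0)}\to L$ need not extend uniquely (even up to homotopy) to $M$ when $M$ has non-cycle elements in degree~$0$ or nontrivial negative degrees. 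This is precisely the subtlety that the cited weak-equivalence result absorbs. Your honest flagging of this as ``the main obstacle'' is correct, but the resolution is not the connected-cover factorization you sketch; it is the stronger statement that the composite is a weak equivalence in the model structure, which then lets you invoke the model-category machinery directly.
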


     \begin{proof} It is known, see \cite[Proposition 8.7]{bfmt0}, that the composition
     $$
\lasu_X^b\stackrel{\simeq}{\longrightarrow}(\lasu_X,d_b)\stackrel{\simeq}{\longrightarrow}\lasu_X/(b)
$$
is not only an injective quasi-isomorphism but a weak equivalence in $\catcdgl$. Hence, $L'$ is also weakly equivalent to $\lasu_X/(b)$ and thus $\{L',L\}\cong\{\lasu_X/(b),L\}\cong \{X,\langle L\rangle\}^*$. To finish, recall that being $L$ a Lie model of $Y$, $\langle L\rangle\simeq\bq_\infty Y$.
\end{proof}

We now translate to $\catcdgl$ the action  of $\pi_1\langle L\rangle $ on $\{X,\langle L\rangle\}^*$. For it, and for any given two connected simplicial sets $X$ and $Y$, the action $\bulletchi$ of $\pi_1(Y)$ on $\{X,Y\}^*$ can be described as follows: Consider the wedge $[a,b]\vee X$ of an interval with $X$, and let   $r{\colon} [a,b]\vee X\to X$ be the retraction  obtained by contracting $[a,b]$ into $b$. As $b\hookrightarrow X$ is a cofibration,  there exists a homotopy equivalence $g{\colon}
  X \to [a,b]\vee X$   such that  $g (b)= a$ and $r\circ g\simeq\id_X$. Let $\widetilde g=c\circ g\colon X\to S^1\vee X$ where $c$ glues $a$ with $b$. Then, given  $\alpha\colon S^1\to Y$ and $f\colon X\to Y$, $[\alpha]^*\bulletchi[f]^*$ is represented by $(\alpha\vee f)\circ\widetilde g$.

\smallskip

We define now an analogous action of $H_0(L)$ on $\{\lasu_X/(b), L\}$. For it, recall from \cite[Proposition 4.10]{bfmt0} that, for any given element $y\in L_0$, the exponential
$$
e^{\ad_y}=\sum_{n\ge0}\frac{\ad_y^n}{n!}
$$ is a well defined automorphism of $L$. In fact, this is a particular instance of a more general setting covered by  Remark \ref{remarkexp} below. Furthermore, if $y*z$ denotes the BCH product of elements $y,z\in L_0$, then $e^{\ad_{y*z}}=e^{\ad_y}\circ e^{\ad_z}$, see \cite[Corollary 4.12]{bfmt0}.

\begin{definition}\label{granaccion} Given any cdgl $L'$,  the group $(L_0,*)$ acts on $\Hom_{\catcdgl}(L',L)$ by $y\bulletchi \varphi=e^{\ad_y}\circ \varphi$.  We denote in the  same way the induced action,
$$
H_0(L)\times\{L',L\}\stackrel{\bullet}{\longrightarrow}\{L',L\},\qquad[y]\bulletchi[\varphi]=[e^{\ad_y}\circ \varphi].
$$
\end{definition}

Then, if we denote by $\gamma\colon \{X,\langle L\rangle\}^*\cong \{\lasu_X/(b),L\}$ the explicit bijection described in Proposition \ref{freepointed}, we prove the following which extends \cite[\S12.5.1]{bfmt0}:

\begin{theorem}\label{modeloaccion} The following diagram commutes,
$$\xymatrix{
\pi_1\langle   L\rangle  \times \{X,\langle L\rangle\}^* \ar[r]^(.56)\bullet \ar[d]_{\rho_1\times \gamma}^\cong & \{X,\langle L\rangle\}^*   \ar[d]_\cong^{\gamma}\\
H_0(L) \times \{\lasu_X/(b),L\} \ar[r]^(.58)\bullet  &  \{\lasu_X/(b),L\}.
}
$$
\end{theorem}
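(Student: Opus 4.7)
Let $\alpha\colon S^1\to\langle L\rangle$ be a loop with $\rho_1[\alpha]=[x]\in H_0(L)$, and let $f\colon X\to\langle L\rangle$ be a pointed map with $\gamma[f]^*=[\varphi]$. By the topological definition of the action, $[\alpha]^*\bulletchi[f]^*$ is represented by $(\alpha\vee f)\circ\widetilde g$, so the theorem reduces to the identity
$$
\gamma\bigl[(\alpha\vee f)\circ\widetilde g\bigr]^* = [e^{\ad_x}\circ\varphi].
$$
The first task is to rewrite the left-hand side as $[\widehat\psi\circ G]$, where $G$ is the pointed Lie model of $\widetilde g$. Since $\lasu$ is a left adjoint it preserves the pushout defining $S^1\vee X$, and the Lawrence--Sullivan structure yields the pointed Lie model $\libc(y)$ (with $dy=0$) for $S^1$; hence $\lasu_{S^1\vee X}/(b)\cong\libc(y)\coprod\lasu_X/(b)$. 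The pointed Lie model of $\alpha\vee f$ is then the cdgl morphism $\widehat\psi$ sending $y\mapsto x$ and $\xi\mapsto\varphi(\xi)$, and the explicit description of $\gamma$ in Proposition \ref{freepointed} makes the adjunction compatible with pointed composition, yielding the rewriting.

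The heart of the proof is the claim that $G$ is cdgl-homotopic to the morphism $\widetilde g_*\colon\xi\mapsto e^{\ad_y}(\xi)$. Note that $\widetilde g_*$ is a genuine cdgl morphism because $dy=0$ makes $e^{\ad_y}$ a cdgl automorphism of $\libc(y)\coprod\lasu_X/(b)$. Granting the claim, we immediately conclude
$$
\widehat\psi\circ\widetilde g_*(\xi) = e^{\ad_{\widehat\psi(y)}}\bigl(\widehat\psi(\xi)\bigr) = e^{\ad_x}(\varphi(\xi)),
$$
which is precisely $[x]\bulletchi[\varphi]$ of Definition \ref{granaccion}, proving the theorem.

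To establish the claim I would use the factorization $\widetilde g=c\circ g$ from the paper, with $Y=[a,b]\vee X$, $g\colon X\to Y$ a free homotopy inverse of the retraction $r$ satisfying $g(b)=a$, and $c\colon Y\to S^1\vee X$ identifying $a$ with $b$. The pushout $\lasu_Y=\lasu_X\coprod_{\libc(b)}\lasu_1$ gives $\lasu_Y/(b)=\libc(a,a_{01})\coprod\lasu_X/(b)$ equipped with the Lawrence--Sullivan differential, while the pointed Lie model of $c$ is the cdgl morphism sending $a\mapsto 0$ and $a_{01}\mapsto y$. The essential geometric input is the Lawrence--Sullivan gauge identity $a_{01}\cG a=b$ in $\lasu_Y$, which says that $a_{01}$ gauges the Maurer--Cartan element $a$ to $b=0$ in $\lasu_Y/(b)$. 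Since $g$ sends the basepoint $b$ to the Maurer--Cartan element $a$, this identity lets us model $\lasu_g$ up to cdgl homotopy as the natural inclusion $\iota\colon\lasu_X/(b)\hookrightarrow\lasu_Y/(b)$ twisted by the Lie automorphism $e^{\ad_{a_{01}}}$. Postcomposing with $\lasu_c$ produces $G\simeq e^{\ad_y}\circ\iota=\widetilde g_*$.

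The principal technical obstacle is that $a_{01}$ is not a cycle in $\lasu_Y/(b)$: its differential is the Lawrence--Sullivan Bernoulli-type expression in $a_{01}$ and $a$, so $e^{\ad_{a_{01}}}\circ\iota$ is only a Lie algebra morphism, not a chain map. To handle this one must construct an explicit cdgl homotopy in the path object $(\libc(y)\coprod\lasu_X/(b))\otimesc\land(t,dt)$ connecting a concrete pointed Lie model of $\widetilde g$ to the closed form $e^{\ad_y}\circ\iota$. The correction terms come from $da_{01}$ and will have to be built using the Lawrence--Sullivan formula; crucially, their images under $\lasu_c$ vanish because $\lasu_c(a_{01})=y$ is a cycle, which is the algebraic mechanism that makes the clean exponential formula emerge from the more intricate LS-interval model.
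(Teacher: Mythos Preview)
Your overall strategy mirrors the paper's: factor the topological action through $\widetilde g^*$, pass to pointed Lie models, and reduce everything to the claim that the pointed Lie model of $\widetilde g$ is (up to homotopy) the morphism $\xi\mapsto e^{\ad_y}(\xi)$. The divergence---and the gap---is in how you establish that claim.

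You work directly in the quotients $\lasu_X/(b)$ and $\lasu_Y/(b)$, where, as you correctly observe, $a_{01}$ is not a cycle and hence $e^{\ad_{a_{01}}}\circ\iota$ is not a chain map. You then propose to repair this by constructing an unspecified cdgl homotopy in the path object, with ``correction terms coming from $da_{01}$'' that are supposed to vanish after applying $\lasu_c$. But this is not a proof: you have no concrete formula for the pointed model $G$ of $\widetilde g$ to homotope \emph{from}, and the vanishing-under-$\lasu_c$ heuristic does not by itself produce the required homotopy between two genuine cdgl morphisms.

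The paper sidesteps this obstacle entirely by working one level up, in the \emph{unreduced} models with \emph{perturbed} differentials. It defines
$$
h\colon(\lasu_X,d_b)\longrightarrow(\lasu_{[a,b]\vee X},d_a),\qquad h(b)=a,\quad h(v)=e^{\ad_x}(v)\ \text{for }v\in V.
$$
The crucial point is that with these perturbed differentials the exponential $e^{\ad_x}$ \emph{is} a chain map from $d_b$ to $d_a$ (this is \cite[Proposition~4.24]{bfmt0}), so $h$ is a genuine cdgl morphism---no correction terms are needed. Since $\lasu_r\circ h=\id_{\lasu_X}$ on the nose and $\lasu_r$ is a weak equivalence, $\lasu_g$ is forced to be homotopic to $h$. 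Passing to the quotients by $(b)$ then gives the clean formula $\lasu_{\widetilde g}(v)=e^{\ad_x}(v)$ directly. This perturbed-differential trick is exactly the missing idea in your argument; once you have it, the rest of your outline goes through.
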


\begin{proof}  Note that, by definition, the following commutes:

  \begin{equation}\label{uno}
  \xymatrix{
\pi_1\langle   L\rangle  \times \{X,\langle L\rangle\}^* \ar[dr]^(.53)\bullet \ar[d]_\cong &  \\
\{S^1\vee X,\langle L\rangle\}^*  \ar[r]_(.58){{\widetilde g}^*}  &  \{X,\langle L\rangle\}^* .
}
\end{equation}
By the naturality of the bijections in Proposition \ref{freepointed} the following also commutes:
\begin{equation}\label{dos}
  \xymatrix{
  \{S^1\vee X,\langle L\rangle\}^*  \ar[r]^(.58){{\widetilde g}^*}\ar[d]_\cong  &  \{X,\langle L\rangle\}^*\ar[d]^\cong  \\
\{\lasu_{S^1\vee X}/(b),L\}  \ar[r]_(.58){ {\lasu_{\widetilde g}}^*}  &  \{\lasu_X/(b),L\}.
}
\end{equation}
Next, as any connected simplicial set is weakly equivalent to a reduced one, we lose  no generality by assuming $X$ reduced with $b$ the only $0$-simplex. Hence we may write,
$$
\lasu_X=(\libc(V\oplus \langle b\rangle),d)\esp{with} V=V_{\ge 0}.
$$
Then, \cite[Proposition 7.21]{bfmt0} guarantees that $d$ may be chosen so that, for the perturbed differential,
$$
(\lasu_X,d_b)=(\libc(V),d_b)\coprodc(\lib(b),d_b).
$$
Here, $\coprodc$ denotes the coproduct in $\catcdgl$.
On the other hand, $\lasu_{[a,b]}=\lasu_1$ is the cdgl $(\libc(a,b,x),d)$ in which  $a$ and $b$ are MC elements, and
$$
d x = \ad_xb + \sum_{n=0}^\infty \frac{B_n}{n!}\,  \ad_x^n (b-a)
$$
where $B_n$ denotes the $n$th Bernoulli number. Thus, as the realization functor preserves colimits,
$$
\lasu_{[a,b]\vee X}=(\libc(V\oplus\langle a,b,x\rangle),d).
$$
We define,
$$
h\colon (\lasu_X,d_b)\longrightarrow (\lasu_{[a,b]\vee X},d_a),\qquad h(b)=a,\qquad h(v)=e^{\ad_x}(v),\quad v\in V,
$$
and check that it is a cdgl morphism: obviously $h(d_bb)=d_ah(b)$ while, applying \cite[Proposition 4.24]{bfmt0} and the fact that  $d_b(v)\in\libc(V)$ for any $v\in V$, we get
$$
h(d_bv)=e^{\ad_x}(d_bv)=d_ae^{\ad_x}( v)=d_a h(v).
$$
On the other hand,
$$
\lasu_r\colon(\libc(V\oplus\langle a,b,x\rangle),d)\to (\libc(V\oplus\langle b\rangle),d)
$$
is the identity on $V\oplus\langle b\rangle$, and sends  $a$ to $b$ and $x$ to $0$. Therefore, as $\lasu_r\circ h=\id_{\lasu_X}$, $\lasu_g$ is necessarily homotopic to $h$ and thus, also  up to homotopy,
$$
\lasu_{\widetilde g}\colon (\libc(V\oplus\langle b \rangle),d)\longrightarrow (\libc(V\oplus\langle b,x \rangle),d), \quad \lasu_{\widetilde g}(b)=b, \quad \lasu_{\widetilde g}(v)=e^{\ad_x}(v).
$$
Hence, the induced map on the quotients $
\lasu_{\widetilde g}\colon \lasu_X/(b)\to\lasu_{S^1\vee X}/(b)$ is the morphism
$$
(\libc(V),d)\longrightarrow ( \libc(V),d)\coprodc(\lib(x),0),\quad v\mapsto e^{\ad_x}(v),
$$
and therefore, the bottom row of diagram (\ref{dos}) becomes
$$
\{(\lib(x),0)\coprodc(\libc(V),d),L\}\stackrel {{\lasu_{\widetilde g}}^*}{\longrightarrow}\{(\libc(V),d),L\}.
$$
But $\{(\lib(x),0)\coprodc(\libc(V),d),L\}\cong H_0(L)\times\{\lasu_X/(b)\}$ and thus, the following commutes,
 \begin{equation}\label{tres}
  \xymatrix{
\{\lasu_{S^1\vee X}/(b),L\}  \ar[r]^(.55){ {\lasu_{\widetilde g}}_*}\ar[d]_\cong  &  \{\lasu_X/(b),L\} \\
H_0(L)\times\{\lasu_X/(b)\} \ar[ur]_(.50){\bulletchi}  &
}
\end{equation}
Joining diagrams (\ref{uno}), (\ref{dos}) and (\ref{tres})  produces  the diagram of the statement.
\end{proof}

As $\{X,\langle L\rangle\} \cong \{X,\langle L\rangle\}^*/\pi_1\langle L\rangle$, we deduce:

\begin{corollary}\label{coropointed} If $L'$ is a Lie model of $X$, $\{X,\langle L\rangle\}\cong \{L',L\}/H_0(L)$. \hfill$\square$
\end{corollary}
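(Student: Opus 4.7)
The plan is to stitch together the three ingredients that have just been established: the topological identification of free homotopy classes as orbits of pointed ones, the pointed bijection of Corollary \ref{corouno}, and the compatibility of actions from Theorem \ref{modeloaccion}.

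First, I would invoke the standard fact (valid for any connected target with an action of its fundamental group on pointed homotopy classes) that the forgetful map from pointed to free homotopy classes induces a bijection
$$
\{X,\langle L\rangle\}^*/\pi_1\langle L\rangle \;\xrightarrow{\;\cong\;}\;\{X,\langle L\rangle\}.
$$
This is precisely the statement quoted just before the corollary, and it is the only topological input required.

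Next, I would use Corollary \ref{corouno} to replace the pointed set on the left by $\{L',L\}$, where $L'$ is the minimal Lie model of $X$ (or indeed any Lie model, since weakly equivalent cofibrant cdgl's give the same set of homotopy classes out). Then, by Theorem \ref{modeloaccion}, the action of $\pi_1\langle L\rangle$ on $\{X,\langle L\rangle\}^*$ corresponds, under the bijections $\rho_1\colon\pi_1\langle L\rangle\xrightarrow{\cong}H_0(L)$ and $\gamma\colon\{X,\langle L\rangle\}^*\xrightarrow{\cong}\{L',L\}$, to the action $[y]\bulletchi[\varphi]=[e^{\ad_y}\circ\varphi]$ of Definition \ref{granaccion}. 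Passing to orbits on both sides therefore yields
$$
\{X,\langle L\rangle\}\;\cong\;\{X,\langle L\rangle\}^*/\pi_1\langle L\rangle\;\cong\;\{L',L\}/H_0(L),
$$
which is what we want.

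There is no real obstacle: the corollary is a formal consequence of the two preceding results, and its proof is essentially a one-line diagram chase together with the classical topological identification. The only mild subtlety worth flagging is that the statement uses an arbitrary Lie model $L'$, not just the minimal one; this is harmless because any two Lie models of $X$ are related by a zig-zag of weak equivalences in $\catcdgl$, hence induce the same set of homotopy classes into $L$ and are equivariant with respect to the $H_0(L)$-action, which depends only on $L$ and not on $L'$.
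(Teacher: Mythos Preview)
Your proposal is correct and follows exactly the paper's approach: the corollary is stated with a $\square$ and no separate proof, the sentence immediately preceding it being ``As $\{X,\langle L\rangle\} \cong \{X,\langle L\rangle\}^*/\pi_1\langle L\rangle$, we deduce:''. You have simply spelled out this one-line deduction using Corollary~\ref{corouno} and Theorem~\ref{modeloaccion}, and your remark about passing from the minimal model to an arbitrary Lie model is a reasonable clarification the paper leaves implicit.
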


\begin{ex}
Let $L= (\widehat{\mathbb L} (u,v), 0)$ with $\vert u\vert = \vert v\vert = 0$, which is the minimal model of $S^1\vee S^1$, and recall that the model of the circle is $\lasu_{S^1} = (\widehat{\mathbb L}(b,x),d)$ with $|b|=-1$, $db= -\frac{1}{2}[b,b]$ and $dx= [x,b]$. Denote by $f,g\colon \lasu_{S^1}\to L$ the morphisms defined by $f(x)= v$ and $g(x) =e^{\ad_u}(v)$. The induced morphisms $\overline f,\overline g\colon \lasu_{S^1}/(b)=( \lib(x),0)\to L$ are clearly not homotopic but $u\bulletchi \overline f=\overline g$ which means that $f$ and $g$ are homotopic. An explicit homotopy is given by $\Psi\colon\lasu_{S^1}\to L\otimesc\land(t,dt)$, $\psi(x)= e^{t\, \ad_u}(v)$ and $\Psi(b) = -u dt$. The corresponding maps $S^1\to \bq_\infty(S^1\vee S^1)$ are thus homotopic but not pointed homotopic.
\end{ex}

\section{Complete subgroups of complete Lie algebras}\label{cs}

In this section all complete Lie algebras will be ungraded, or equivalently, concentrated in degree $0$, and with no differential. We denote by $\catcla$ the corresponding category and, to stress this restriction, we will denote by $M$ (instead of $L$) such a general Lie algebra.

 We first recall the category isomorphism between complete Lie algebras and complete groups in the Malcev sense.

 Given a group $G$ we denote its commutator by curved brackets:
$$(x,y)=x y x^{-1}y^{-1}$$ and consider its
lower central series,
$$G=G^1\supset G^2\supset \dots \supset G^i\supset G^{i+1}\supset \dots$$
where  $G^{i}=(G^{i-1},G)$ for $i\geq 2$. A group $G$ is \emph{pronilpotent} if the natural map $G\stackrel{\cong}{\longrightarrow} \varprojlim_n G/G^n$ is an isomorphism. On the other hand recall that $G$ is said to be $0$-local, uniquely divisible or rational if for any natural $n\ge 1$ the map $G\to G$, $ g\mapsto g^n$,
is a bijection.

\begin{definition}\label{malcev}
  A group $G$ is \emph{Malcev $\Q$-complete} (or simply complete) if it is pronilpotent and for each $n\geq 1$ the abelian group
 $G^n/G^{n+1}$ is a $\Q$-vector space. With the obvious notion of morphism we denote by $\catcgrp$ the corresponding category of complete groups.
 \end{definition}

From the original work of Malcev \cite{mal} several reformulations and generalization of the so called {\em Malcev equivalence} can be found in the literature. Following \cite[\S A.3]{qui} and its modern reformulation in \cite[\S8.2.8]{fres}, a  suitable form for our goals of this statement reads:

 There is an isomorphism of categories
$$
\xymatrix{
\catcla \ar@<0.75ex>[r]^-{\Phi} &\mathbf{dgl} \ar@<0.75ex>[l]^(0.49){\Psi }
}
$$
with the following properties:
\begin{itemize}
\item The underlying set is not altered by any of the functors.
\item Given $M$ a  complete Lie algebra $M$,  $\Psi(M)$ is  the group $(M,\ast)$ where $\ast$ is the BCH product. There are also explicit formulas for the Lie algebra structure $\Phi(G)$ for any given complete group $G$ (see for instance \cite[\S 2.3]{ste}).

\item If $\Phi(M)=G$ then for each $n\geq 1$, there is an equality of sets $M^n=G^n$ (see \cite[Theorem 2.2]{FHTII}).

\item When restricted to nilpotent Lie algebras and $0$-local nilpotent groups, this is the original equivalence given in \cite{la} and \cite{mal}.

\end{itemize}

In this context, we will need the following, slightly different, characterization of Malcev complete groups.

\begin{theorem}\label{cerrado}
A group $G$ is Malcev complete if and only if it is pronilpotent and $0$-local.
\end{theorem}
\begin{proof}
The Malcev equivalence implies that a Malcev complete group is equivalent to a Lie algebra. In particular, the multiplication by scalars in the Lie algebra implies that the group is $0$-local.

For the other implication note first that,
 given a complete Lie algebra $M$, the Lie algebras $M^n$ and $M/M^n$ are also complete for any $n\geq 1$. Via the Malcev isomorphism we deduce that $G^n$ and $G/G^n$ are complete groups whenever $G$ is a complete group.

 We only have to prove that $G^n/G^{n+1}$ is  $0$-local for all $n\geq 1$. For it, we
 first show that  the group $G/G^{n}$ is torsion-free for $n\geq 1$. Suppose that there is $x\in G$ such that
$$x^k\equiv 0 \pmod{G^{n}},$$
that is, $x^k\in G^{n}$. We then may write
$$x^k=\prod_{i=1}^{m} (a_i,b_i)$$
where $a_i\in G$ and $b_i\in G^{n-1}$.
Since $G$ is $0$-local, for each $i=1,\dots m$ we can find $c_i\in G$ such that $c_i^k=a_i$. Note that in $G/G^{n+1}$ the class of the  elements of $G^{n}=(G,G^{n-1})$ are in the center and thus one easily checks that
$$
(a_i,b_i)=(c_i^k,b_i)\equiv (c_i,b_i)^k \pmod{ G^{n+1} }.
$$
Then, write
$$y=\prod_{i=1}^m (c_i,b_i)$$
which is a product of elements of $G^{n}$  and thus, modulo $G^{n+1}$, we have:
$$
y^k= \biggl(\prod_{i=1}^m (c_i,b_i)\biggr)^k \equiv \prod_{i=1}^m(c_i,b_i)^k \equiv \prod_{i=1}^m (a_i,b_i)=x^k \pmod{ G^{n+1}}.$$
In particular, since $y\in G^n$,
$$(xy^{-1})^k\equiv 0 \pmod{ G^{n+1}}.$$

Write $x_{n}=x$, $x_{n+1}=xy^{-1}$ and repeat this process to obtain a sequence of elements $(x_j)_{j\geq n}$ in $G$ such that
$$x_{j+1}\equiv x_j \pmod{G^j}$$
and
$x_j^k\in G^j$ for all $j\geq n$.
However, since $G$ is a pronilpotent group, $G\cong\varprojlim G/G^n$ and, denoting $\overline x_j$ the class of $x_j$ in $G/G^j$, we may then identify the sequence $x_0=(\overline x_j)_{j\geq n}$ with an element of $G$ for which
$$x_0\equiv x_i \pmod{G^j}\quad\text{for all $i\geq j$}.
$$
 In particular, $x_0^k\in G^j$ for all $j\geq n$, which implies that $x_0^k=0$. Since $G$ is $0$-local, we deduce that $x_0=0$. Therefore, the sequence
$(\overline x_j)_{j\geq n}$
is identically zero. In particular, $x\in G^n$, and therefore $G/G^n$ is torsion-free.

Next, since $G/G^n$ is nilpotent and torsion-free, by \cite[Theorem 2.2]{HMR}, the map $G/G^n\to G/G^n$,  $\overline x\mapsto \overline x^k$, is injective for any natural $k\ge 1$. On the other hand, it is clearly surjective  as $G$ is $0$-local and we conclude that $G/G^n$ is $0$-local, for any $n\geq 1$.

Finally, consider the short exact sequence
$$G^n/G^{n+1}\to G/G^{n+1} \to G/G^{n}$$
where the two right groups are $0$-local and all of them are nilpotent. Then, \cite[Corollary 2.5]{HMR} implies that $G^n/G^{n+1}$ is also $0$-local.

\end{proof}

 We now briefly  describe the exponential and logarithm maps in for a given complete Lie algebra $M$ and refer to \cite[\S2.4]{FHTII} for details. Given $UM$ the  universal enveloping algebra of $M$ consider  the ideal $I$ of $UM$ generated by $M$. By completing with respect to the filtration given by $I^0=UM$, $I^1=I$
and $I^n=I^{n-1}I$, $n\geq 2$, we get
$$
\widehat{UM} = \varprojlim_{n\ge 0}  UM/I^n \esp{and} \widehat{I} = \varprojlim_{n\ge 1} I/I^n,
$$
together with the well known bijections (see for instance \cite[Chapter 4]{se}), inverses of each other,
$$
\xymatrix{\widehat I\,\, \ar@<0.98ex>[r]_(.43){\scriptscriptstyle \cong}^-{\exp} &{\,1+\widehat I} \ar@<0.95ex>[l]^-{\log} }
$$
given by
$$
\exp(x)=e^x=\sum_{n\ge 0}\frac{x^n}{n!}\esp{and}\log(1+x)=\sum_{n\ge 1}(-1)^{n+1}\frac{x^n}{n}.
$$
Now, the diagonal $\Delta\colon UM\to UM$ clearly sends $I^n$ to $J^n=\sum_{i+j=n}I^i\otimes I^j$ and thus it induces an algebra morphism denoted in the same way
$$
\Delta\colon \widehat{UM}\to \widehat{UM}\otimesc \widehat{UM}=\varprojlim_n (UM\otimes UM)/J^n.
$$
The {\em primitive} and {\em grouplike} elements of $UM$ are  respectively defined by
$$
P=\{x\in\widehat I,\,\,\Delta x=x\otimes 1+1\otimes x\}\,\,\text{and}\,\, G=\{1+y\in 1+ \widehat I,\,\,\Delta(1+y)=(1+y)\otimes (1+y)\}.
$$
Then, $\exp$ and $\log$ restrict to isomorphisms between $P$ and $G$. However, see \cite[Proposition 2.3]{FHTII}, the inclusion $M\to P$ is in fact an isomorphism and thus we have:
$$
\xymatrix{M\,\, \ar@<0.98ex>[r]_(.54){\scriptscriptstyle \cong}^-{\exp} &{\,G} \ar@<0.95ex>[l]^-{\log} }
$$
Now, the multiplication on $\widehat{UM}$ restricts to a product $\cdot$ on $G$ for which it becomes a group. This in turn induces a group structure on $M$ via the classical {\em Baker-Campbell-Hausdorff} product,
$$
x*y=\log(e^x\cdot e^y),\quad x,y\in M.
$$
To finish the section we make  a crucial observation which, via the exponential, let us identify certain complete Lie algebras of $0$-derivations of a given cdgl $L$ with a group of automorphisms of $L$.
\begin{rem}\label{remarkexp}
 Let $L$ be a cdgl with respect to the filtration $\{F^n\}_{n\ge 1}$ and let $M\subset\derr_0 L$ be a sub Lie algebra of $0$-derivations of $L$ whose elements increase the filtration degree. That is, $\theta(F^n)\subset F^{n+1}$ for any $\theta\in M$ and any $n\ge 1$. Note that, in this case, $M$ is a complete Lie algebra with respect to the filtration $\{{\mathcal F}_n\}_{n\ge 1}$ where ${\mathcal F}_n=\{\theta\in M,\,\,\theta(F^m)\subset F^{m+n}\,\text{for all $m$}\}$. Then, the injection
$$
 M\hookrightarrow \Hom(L,L)$$ extends to an algebra morphism,
 $
I\to \Hom(L,L)$
where the product on the endomorphisms of $L$ is given by composition. More generally, by our assumption on $M$,  the above injection induces, also by composition on the right hand side, injections of the form
$$ I^n\hookrightarrow \Hom(L,F^n),\quad n\ge 1,
$$
which define a map
\begin{equation}\label{rev1}
 \widehat I=\varprojlim_{n\ge 1} I/I^n\longrightarrow   \varprojlim_{n\ge 1} \Hom(L,L)/\Hom(L,F^n).
\end{equation}
However, note that
$$ \varprojlim_{n\ge 1} \Hom(L,L)/\Hom(L,F^n)\cong \varprojlim_{n\ge 1}\Hom(L,L/F^n)\cong
 \Hom(L,\varprojlim_{n\ge 1}L/F^n)\cong \Hom(L,L)
 $$
 and thus, (\ref{rev1}) becomes a map
 $
 \widehat I\longrightarrow \Hom(L,L)
 $
which we extend to $$
\xi\colon 1+\widehat I\longrightarrow \Hom(L,L)
$$
by sending $1$ to $\id_L$.
Observe that, by restricting $\xi$ to the grouplike elements  we obtain an injection of  $G$ into the linear (non necessarily compatible with the differential)  automorphisms of $L$. Then, given $\theta\in M$, we abuse of notation and write $e^\theta=\xi\circ\exp(\theta)$, that is,
$$
e^{\theta}=\sum_{n\ge 0}\frac{\theta^n}{n!}
$$
where the product is now composition. This is then a well defined  linear automorphism of $L$. It is  easy to check that it commutes with the Lie bracket in general, and with the differential whenever $\theta$ is a cycle, see for instance \cite[Proposition 4.10]{bfmt0}.

Thus, if $M\subset\derr_0 L\cap\ker D$, the group $(M,*)$, with the BCH product, is identified, via the exponential, with a subgroup of $\aut(L)$.
\end{rem}

\section{Evaluation fibrations}\label{evalufibra}

Given $X$ and $Y$ simplicial sets, the {\em evaluation fibration} is given by
$$
\map^*(X,Y)\longrightarrow\map(X,Y)\stackrel{\ev}{\longrightarrow}Y
$$
where $\ev$ denotes the evaluation at the base point. We will need a particular version of certain results, some of them already known under a different perspective, describing cdgl models of evaluation fibrations.

\medskip

Given any cdgc $C$ and any dgl $L$, consider the dgl structure on $\Hom(C,L)$ with the usual differential $Df=d\circ f-(-1)^{|f|}f\circ d$, and the {\em convolution} Lie bracket given by
$$
[f,g]=[\,,\,]\circ (f\otimes g)\circ\Delta,
$$
that is,
$$[f,g](c) =\sum_i (-1)^{\vert g\vert \, \vert c_i\vert} [f(c_i), g(c_i')] ,\quad\text{with}\quad \Delta c = \sum_i c_i\otimes c_i'\,.$$

\begin{rem}\label{rem1} If  $L$ is a cdgl then   $\Hom(C,L)$ is also complete. Indeed, let $\{F^n\}_{n\ge 1}$ be a filtration for which $L\cong \varprojlim_n L/F^n$ and consider the filtration $\{G^n\}_{n\ge 1}$ of $\Hom(C,L)$ given by $G^n=\Hom(C,F^n)$. Then,
$$
\Hom(C,L)\cong \varprojlim_n\Hom(C,L/F^n)\cong \varprojlim_n\Hom(C,L)/G^n.
$$
The same applies to note that $\Hom(\overline C,L)$ is also a sub cdgl of $\Hom(C,L)$ and there is a cdgl isomorphism,
\begin{equation}\label{iso}
\Hom(C,L)\cong \Hom(\overline C,L)\,\timest\, L,
\end{equation}
where both $L$ and  $\Hom(\overline C,L)$ are sub cdgl's and $[x,f]=\ad_x\circ f$ for  $x\in L$ and $f\in \Hom(\overline C,L)$. In particular, this twisted product is naturally a complete dgl.

Finally observe that the homotopy type of the cdgl $\Hom(C,L)$ is an invariant of the homotopy type of $C$. Indeed, if $\varphi\colon C\stackrel{\simeq}{\longrightarrow} C'$ is a cdgc quasi-isomorphism, the cdgl quasi-isomorphism $\varphi_*\colon \Hom(C',L)\stackrel{\simeq}{\longrightarrow} \Hom(C,L)$ trivially induces quasi-isomorphisms $\Hom(C',L)/{G'}^n\stackrel{\simeq}{\longrightarrow} \Hom(C,L)/{G}^n$. Thus, by the {\em generalized Goldman-Millson Theorem}, see \S\ref{prelimi}, $\varphi_*$ is a weak equivalence of cdgl's.

\end{rem}

For the remaining of this section we fix a connected cdgl $L$ and a connected simplicial set $X$  of finite type whose minimal model is denoted by $L'$. Recall  from \S\ref{prelimi} the notation $\{{L'}^n\}_{\ge 1}$ for the  filtration associated to  $L'$ and that $\quic$ denotes the chain coalgebra functor defined in (\ref{quillenlc}). We begin with a reformulation of \cite[Proposition 12.25]{bfmt0}:

\begin{theorem}\label{modelohom} The  realization of the  short exact cdgl sequence
$$
0\to\varinjlim_n\Hom(\overline\quic(L'/{L'}^n),L)\longrightarrow \varinjlim_n\Hom(\quic(L'/{L'}^n),L)\stackrel{\ev}{\longrightarrow} L\to 0
$$
has the homotopy type of the fibration sequence
$$
\Map^*(X,\langle L\rangle)\longrightarrow \Map(X,\langle L\rangle)\stackrel{\ev}{\longrightarrow} \langle L\rangle.
$$
In other words, there is a commutative diagram
$$
\xymatrix{
\Map^*(X, \langle L\rangle) \ar[r] & \Map(X, \langle L\rangle) \ar[r]^(.60){\ev} & \langle L\rangle\\
	\langle \varinjlim_n\Hom(\overline\quic(L'/{L'}^n),L)\rangle\ar[u]^\simeq \ar[r] & \langle\varinjlim_n\Hom(\quic(L'/{L'}^n),L) \rangle\ar[u]^\simeq \ar[r] &\langle L\rangle,\ar[u]^\simeq}$$
where the vertical maps are homotopy equivalences.
\end{theorem}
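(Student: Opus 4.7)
The strategy is to reduce the theorem to the already established model of the free mapping space (Proposition 12.25 of \cite{bfmt0}) and then extract the pointed version and the evaluation map as a splitting of that model. The first step is to recall that $\varinjlim_n\Hom(\quic(L'/{L'}^n),L)$ is a Lie model of $\Map(X,\langle L\rangle)$, with an explicit natural weak equivalence $\langle\varinjlim_n\Hom(\quic(L'/{L'}^n),L)\rangle\simeq\Map(X,\langle L\rangle)$. This already provides the middle vertical arrow and the middle term of the short exact sequence.

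The second step is to identify the pointed mapping space model. The counit $\varepsilon\colon\quic(L'/{L'}^n)\to\bq$ together with the coaugmentation $\eta\colon\bq\to\quic(L'/{L'}^n)$ yield the splitting $\quic(L'/{L'}^n)=\overline\quic(L'/{L'}^n)\oplus\bq$, which, as recorded in Remark \ref{rem1}, produces the twisted product isomorphism
$$\Hom(\quic(L'/{L'}^n),L)\cong\Hom(\overline\quic(L'/{L'}^n),L)\timest L,$$
and hence the required short exact sequence of cdgl's, natural in $n$. A morphism $f\colon\quic(L'/{L'}^n)\to L$ corresponds under the equivalence with $\Map(X,\langle L\rangle)$ to a map sending the basepoint of $X$ to $f\circ\eta(1)\in\mc(L)$, so the projection onto the $L$-factor models the geometric evaluation at the basepoint. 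Consequently, the kernel $\varinjlim_n\Hom(\overline\quic(L'/{L'}^n),L)$ models the pointed mapping space, giving the left vertical arrow.

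The third step is to take the direct limit and realize. Since direct limits commute with $\Hom(-,L)$ up to the filtration used in Remark \ref{rem1}, and since each projection $\Hom(\quic(L'/{L'}^n),L)\to L$ is surjective in non-negative degrees (hence a fibration in $\catcdgl$), the direct limit sequence is a fibration sequence of cdgl's. Applying the realization functor, which by the Quillen pair (\ref{pair}) sends fibration sequences of cdgl's to fibration sequences of simplicial sets, yields a fibration sequence whose middle and fiber terms are identified, via the preceding steps, with the free and pointed mapping spaces. Naturality of the adjunction underlying Proposition 12.25 with respect to the inclusion of the basepoint $b\hookrightarrow X$ forces the resulting square with $\ev$ on both sides to commute.

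The main obstacle lies in the precise identification of the algebraic projection onto $L$ with the geometric evaluation at the basepoint of $X$. This requires unpacking the adjunction used in Proposition 12.25 and tracing how the coaugmentation of $\quic(L'/{L'}^n)$ corresponds, via the bar-cobar machinery, to the chosen MC element modeling the basepoint. A secondary technical point is compatibility of the direct limit with the cdgl filtrations; this is handled by Remark \ref{rem1} together with the generalized Goldman–Millson theorem of Section \ref{prelimi}, which guarantees that the realization is insensitive to passing to the direct limit once the filtration quotients stabilize.
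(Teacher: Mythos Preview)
Your proposal has the right overall architecture, but there is a genuine gap in the first step that reverses where the real content lies.

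You write that by Proposition~12.25 of \cite{bfmt0} one may ``recall that $\varinjlim_n\Hom(\quic(L'/{L'}^n),L)$ is a Lie model of $\Map(X,\langle L\rangle)$''. But Proposition~12.25, as stated there, is formulated on the \emph{commutative} side: for a cdga model $A$ of $X$ it produces the diagram
\[
\xymatrix{
\Map^*(X,\langle L\rangle)\ar[r] & \Map(X,\langle L\rangle)\ar[r]^-{\ev} & \langle L\rangle\\
\langle A^+\widehat\otimes L\rangle\ar[u]^{\simeq}\ar[r] & \langle A\widehat\otimes L\rangle\ar[u]^{\simeq}\ar[r] & \langle L\rangle\ar[u]^{\simeq}
}
\]
already including the pointed part and the evaluation map. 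So the splitting you labor over in your second paragraph, and the identification of the projection with $\ev$ that you flag as the ``main obstacle'', are in fact what Proposition~12.25 hands you for free.

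What is \emph{not} free, and what constitutes the actual content of Theorem~\ref{modelohom}, is the passage from $A\widehat\otimes L$ to the colimit of $\Hom(\quic(L'/{L'}^n),L)$. This requires three ingredients that your proposal does not supply: (i) the choice of the specific Sullivan model $A=\varinjlim_n\quic^*(L'/{L'}^n)$, which is legitimate by \cite[Theorem~10.8]{bfmt0}; (ii) the isomorphism $A\widehat\otimes L\cong\Hom(A^\sharp,L)$, valid because each $\quic^*(L'/{L'}^n)$ is of finite type (since $L'/{L'}^n$ is); and (iii) the observation that the complete tensor product commutes with inductive limits in the cdga variable, so that $A\widehat\otimes L\cong\varinjlim_n\Hom(\quic(L'/{L'}^n),L)$. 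Your remark that ``direct limits commute with $\Hom(-,L)$'' is not the right statement here: $\Hom$ is contravariant in its first argument, so colimits there become limits, not colimits; what is really being used is the commutation of $\widehat\otimes$ with $\varinjlim$ on the cdga side followed by a level-wise duality.

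In short: you put the work in the wrong place. The splitting and the evaluation identification are already packaged in Proposition~12.25; the missing argument is the translation from the cdga model $A\widehat\otimes L$ to the coalgebraic $\varinjlim_n\Hom(\quic(L'/{L'}^n),L)$, which needs the specific choice of $A$, the finite-type duality, and the colimit commutation.
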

Here, the direct limit is taken in  $\catcdgl$ and $\varinjlim_n\Hom(\quic(L'/{L'}^n),L)\stackrel{\ev}{\to} L $ is induced by evaluating at $1\in \bq$ every morphism  of each $\Hom(\quic(L'/{L'}^n),L)$.
\begin{proof} Recall from \cite[Theorem 12.18]{bfmt0} or \cite[Theorem 6.6]{Berglund}  that for any cdga model A of any simplicial set $X$ and any cdgl $L$ there is a weak homotopy equivalence
$$
\langle A\otimesc L\rangle\simeq \Map(X,\langle L\rangle)
$$
natural on any possible choice. Moreover, see \cite[Proposition 12.25]{bfmt0}, there is a commutative diagram of simplicial sets
$$
\xymatrix{
\Map^*(X, \langle L\rangle) \ar[r] & \Map(X, \langle L\rangle) \ar[r]^(.60){\ev} &  \langle L\rangle\\
	\langle A^+ \widehat{\otimes} L\rangle\ar[u]^\simeq \ar[r] & \langle A\widehat{\otimes} L\rangle \ar[u]^\simeq \ar[r] &\langle L\rangle,\ar[u]^\simeq}
$$
where the vertical maps are homotopy equivalences and the bottom row is just the realization of the cdgl short exact sequence $A^+\otimesc L\to A\otimesc L\to L$.

On the other hand, if $A$ is a connected cdga of finite type, the cdgl  $A\otimesc L$ (respec. $A^+\otimesc L$) is naturally isomorphic to $\Hom(A^\sharp, L)$ (respec. $\Hom(\overline{A^\sharp}, L)$). Indeed, the following  chain of linear isomorphisms,
$$
 A\otimesc L=\varprojlim_n( A\otimes L/F^n)\cong \varprojlim_n \Hom(A^\sharp,L/F^n)\cong \Hom(A^\sharp, \varprojlim_n  L/F^n)\cong \Hom(A^\sharp,L),
 $$
 preserves differentials and Lie brackets.

Now, we choose
$$
A=\varinjlim_n {\mathscr C}^*(L'/{L'}^n)
$$
 which, by \cite[Theorem 10.8]{bfmt0}, is a Sullivan model of $X$. Observe that since each $L'/{L'}^n$ is of finite type, so is ${\mathscr C}^*(L'/{L'}^n)$. Then, as complete tensor products commute with inductive limits, the theorem follows from the isomorphism
$$
A\otimesc L\cong \varinjlim_n \quic^*(L'/{L'}^n)\otimesc L\cong\varinjlim_n\Hom(\quic(L'/{L'}^n),L)
$$
and its restriction to
$$
A^+\otimesc L\cong \varinjlim_n \quic^+(L'/{L'}^n)\otimesc L\cong\varinjlim_n\Hom(\overline\quic(L'/L'^n),L)
$$
where $\quic^+$ denotes the augmentation ideal of $\quic^*$.

\end{proof}

If $X$ is a nilpotent simplicial set of finite type the above result is drastically simplified. For it, the following observation is important.

\begin{rem}\label{nilpotente} Although outside of the classical Quillen theory for simply connected spaces, Neisendorfer defined in \cite[\S7]{nei} a ``Lie model'' of a nilpotent, finite type complex $X$ as any dgl (not complete) quasi-isomorphic to $\quil(A^\sharp)$. Here, $\quil$ is the functor in defined in (\ref{quillenlc}) and, as in the proof of Theorem \ref{modelohom}, $A$ stands for a finite type Sullivan model of $X$. On the other hand, if  $L'$ is the minimal (cdgl) model of $X$, then $L'$ and $\quil(A^\sharp)$ are quasi-isomorphic, see \cite[Theorem 10.2]{bfmt0}. Moreover, as stated in \S\ref{prelimi}, $\langle L'\rangle\simeq X_\bq$ which is also of the homotopy type of the cdga realization of $\quic^*\quil(A^\sharp)$. For a compendium of the geometrical realization in the commutative side see for instance \cite[\S1.6]{FHTII}.

\end{rem}

In what follows $
\Hom(\overline\quic(L'),L)\timest L$ will always denote the twisted product arising from the isomorphism (\ref{iso}) in Remark \ref{rem1}. The following extends \cite[Corollary 15]{bfm2}  the following result we consider, as in (\ref{iso}), the twisted product
$
\Hom(\overline\quic(L'),L)\timest L$.
\begin{corollary}\label{cor1}  Let $L'$ be a Lie model of the nilpotent simplicial set $X$ of finite type. Then, the  realization of the  short exact cdgl sequence
$$
0\to\Hom(\overline\quic(L'),L)\longrightarrow \Hom(\overline\quic(L'),L)\timest L{\longrightarrow} L\to 0
$$
has the homotopy type of the fibration sequence
$$
\Map^*(X,\langle L\rangle)\longrightarrow \Map(X,\langle L\rangle)\stackrel{\ev}{\longrightarrow} \langle L\rangle.
$$
\end{corollary}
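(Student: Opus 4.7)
The plan is to shortcut the proof of Theorem \ref{modelohom} using the nilpotency hypothesis: rather than building a Sullivan model of $X$ as an inductive limit $\varinjlim_n \quic^*(L'_{\min}/{L'_{\min}}^n)$, we take a single, honest finite type Sullivan model that comes directly from a finite type Lie model of $X$, and then read off the cdgl sequence.

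First, since $X$ is nilpotent and of finite type, by Remark \ref{nilpotente} we may choose a Lie model $L'$ that is finite type in each degree (for instance $L' = \quil(A_{\min}^\sharp)$, where $A_{\min}$ is the minimal Sullivan model of $X$). With this choice, $\quic^*(L')$ is a connected cdga of finite type, and by Remark \ref{nilpotente} it is a Sullivan model of $X$. Applying \cite[Theorem 12.18]{bfmt0} with $A = \quic^*(L')$ yields, exactly as in the proof of Theorem \ref{modelohom}, a homotopy commutative diagram
$$
\xymatrix{
\Map^*(X,\langle L\rangle) \ar[r] & \Map(X,\langle L\rangle) \ar[r]^(.60){\ev} & \langle L\rangle\\
\langle A^+\otimesc L\rangle \ar[u]^\simeq \ar[r] & \langle A\otimesc L\rangle \ar[u]^\simeq \ar[r] & \langle L\rangle \ar[u]^\simeq
}
$$
whose bottom row is the realization of the short exact cdgl sequence $0\to A^+\otimesc L\to A\otimesc L\to L\to 0$. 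The next step reuses the linear-to-cdgl isomorphism $A\otimesc L \cong \Hom(A^\sharp, L)$ already established in the proof of Theorem \ref{modelohom}; because $L'$ is of finite type, double duality gives $A^\sharp = \quic(L')$ and $(A^+)^\sharp = \overline\quic(L')$, so we obtain cdgl isomorphisms $A\otimesc L\cong \Hom(\quic(L'),L)$ and $A^+\otimesc L \cong \Hom(\overline\quic(L'),L)$. Finally, applying the cdgl isomorphism (\ref{iso}) of Remark \ref{rem1} gives
$$
\Hom(\quic(L'),L)\cong \Hom(\overline\quic(L'),L)\timest L,
$$
and this identifies the above short exact sequence with the sequence of the statement, proving the corollary for our chosen $L'$.

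To extend the result to an arbitrary Lie model $L''$ of $X$, we use that any two Lie models of $X$ are quasi-isomorphic, and that $\quic$ preserves quasi-isomorphisms (see (\ref{quillenlc})). Hence $\quic(L'')\simeq \quic(L')$ and $\overline\quic(L'')\simeq \overline\quic(L')$ as cdgcs, and the last paragraph of Remark \ref{rem1} then produces weak equivalences $\Hom(\overline\quic(L'),L)\simeq \Hom(\overline\quic(L''),L)$ compatible with the twisted product structure and the projection onto $L$. The main obstacle I anticipate is a bookkeeping one: verifying that the identification of $A\otimesc L$ with $\Hom(\quic(L'),L)$ transports the differential and the bracket correctly, and then that the short exact sequence produced from $A^+\hookrightarrow A$ really corresponds under (\ref{iso}) to the split injection $\Hom(\overline\quic(L'),L)\hookrightarrow \Hom(\overline\quic(L'),L)\timest L$; both points are essentially formal but require that the counit of $\quic^*(L')$ (evaluation at the coalgebra unit of $\quic(L')$) match the evaluation-at-$1$ map used to define $\timest$.
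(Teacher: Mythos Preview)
Your argument is correct and follows essentially the same route as the paper's proof. Both rely on the diagram from \cite[Proposition 12.25]{bfmt0}, the identification $A\otimesc L\cong\Hom(A^\sharp,L)$ for $A$ of finite type, the quasi-isomorphism $A^\sharp\simeq\quic(L')$, and the invariance of $\Hom(C,L)$ under quasi-isomorphisms in $C$ from Remark~\ref{rem1}; the only difference is the order in which these are invoked---the paper starts with the minimal Sullivan model $A$ and then links $A^\sharp$ to $\quic(L')$ via $A^\sharp\simeq\quic\quil(A^\sharp)\simeq\quic(L')$, whereas you take $A=\quic^*(L')$ for a specific finite type $L'$ and then extend to an arbitrary model at the end.
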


\begin{proof}
As before, this fibration sequence has the homotopy type of the realization of the cdgl sequence $A^+\otimesc L\to A\otimesc L\to L$ where $A$ is the Sullivan minimal model of $X$. As $A$ is of finite type this sequence becomes
$$
0\to\Hom(A_+^\sharp,L)\longrightarrow \Hom(A^\sharp_+,L)\timest L{\longrightarrow} L\to 0.
$$
Now, see the properties of the Quillen pair (\ref{quillenlc}) in \S\ref{prelimi}, since $A^\sharp$ is a connected cdgc, it is quasi-isomorphic to $\quic\quil(A^\sharp)$. Observe, that $\quil(A^\sharp)$ is the {\em Neisendorfer model of $X$} which by Remark \ref{nilpotente}, is quasi-isomorphic to any given cdgl model of X. Hence, since $\quic$ preserves quasi-isomorphisms,
$$
A^\sharp\simeq \quic\quil(A^\sharp)\simeq\quic(L').
$$
This implies, in view of   the last observation in Remark \ref{rem1}, that the above cdgl sequence is homotopy equivalent to
$$
0\to\Hom(\overline\quic(L'),L)\longrightarrow \Hom(\overline\quic(L'),L)\timest L{\longrightarrow} L\to 0
$$
and the corollary follows.
\end{proof}
We now consider the restriction of the evaluation fibration to a given path component of $\map(X,\langle L\rangle)$,
$$
\map_f^*(X,\langle L\rangle)\longrightarrow\map_f(X,\langle L\rangle)\stackrel{\ev}{\longrightarrow} \langle L\rangle,
$$
 determined by a map $f\colon X\to\langle L\rangle$. Note that the fibre   is the non-connected complex of pointed homotopy classes of pointed maps freely homotopic to $f$.

\begin{rem}\label{remark} Given a Lie model $L'$ of a simplicial set $X$, the set  $\pi_0 \Map(X, \langle L\rangle)$ of path components of $\Map(X, \langle L\rangle)$ is the set of free homotopy classes $\{X,\langle L\rangle\}$ which, by Corollary \ref{coropointed}, is in bijective correspondence with the set $\{L',L\}/H_0(L)$. In the same way, by Corollary \ref{corouno}, the set $\pi_0\Map^*(X, \langle L\rangle)$ of pointed homotopy classes $\{X,\langle L\rangle\}^*$ is  identified with $\{L',L\}$.

Now, if $X$ is nilpotent, by Corollary \ref{cor1},
$$
\langle \Hom(\quic(L'),L)\rangle\simeq \Map(X,\langle L\rangle)\esp{and}\langle \Hom(\overline\quic(L'),L)\rangle\simeq \Map^*(X,\langle L\rangle).
$$
As a result,
$$
\pi_0\Map(X, \langle L\rangle)\cong \{L',L\}/H_0(L)\cong \widetilde\mc\bigl(\Hom(\quic(L'),L)\bigr)
$$
while, on the other hand,
$$
\pi_0\Map^*(X, \langle L\rangle)\cong\{L',L\}\cong \widetilde\mc\bigl(\Hom(\overline\quic(L'),L)\bigr).
$$
These correspondences can be explicitly described:

Let  $q\colon\overline\quic(L')\to L'$ be the degree $-1$ linear map defined by $q(sx)=x$ if $x\in L'$ and $q(\land^{\ge 2}sL')=0$. Denote in the same way the only possible extension of $q$ to $\quic(L')$ by $q(1)=0$. Now choose a map $f\colon X\to\langle L\rangle$ which can be assumed to be pointed since $L$ is connected. Then, the pointed homotopy class $[f]^*$ is identified to the homotopy class of a cdgl morphism $\varphi\colon L'\to L$ which corresponds to the gauge class of the MC element
\begin{equation}\label{varphipointed}
\overline\varphi=\varphi\circ q \colon\overline\quic(L')\longrightarrow L,
\end{equation}

 On the other hand, the free homotopy class $[f]$ corresponds to the class of $\varphi$ in $\{L',L\}/H_0(L)$ which is identified to the gauge class of the  MC element
\begin{equation}\label{varphifree}
\overline\varphi=\varphi\circ q \colon \quic(L')\longrightarrow L,
\end{equation}
\end{rem}

With this notation, consider the component
$
\Hom(\quic(L'),L)^{\overline\varphi}\to L
$
of $\Hom(\quic(L'),L)\to L$ at the MC element $\overline\varphi$. Recall from \S\ref{prelimi} that
$
\Hom(\quic(L'),L)^{\overline\varphi}
$
 is, by definition, the connected cover of $(\Hom(\quic(L'),L),D_{\overline\varphi})$ where $D_{\overline\varphi}$ is the differential $D$ in $\Hom(\quic(L'),L)$ perturbed by $\overline\varphi$. Then we have:

\begin{proposition}\label{escapa} The realization of the cdgl morphism
$$
\Hom(\quic(L'),L)^{\overline\varphi}\longrightarrow L
$$
has the homotopy type of the fibration
$$
\map_f(X,\langle L\rangle)\stackrel{\ev}{\longrightarrow} \langle L\rangle.
$$
\end{proposition}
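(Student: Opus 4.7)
The plan is to deduce this from Corollary \ref{cor1} by restricting both sides of the equivalence to the path component labelled by $\overline\varphi$, after observing that the evaluation cdgl morphism behaves well under this restriction. First I would note that, under the isomorphism (\ref{iso}), the morphism $\ev\colon \Hom(\quic(L'),L)\to L$ of Theorem \ref{modelohom} is nothing but the projection of the twisted product onto the $L$-factor; equivalently, $\ev(g)=g(1)$, evaluation at $1\in\bq\subset \quic(L')$.

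The crucial observation is that $\ev(\overline\varphi)=0$. This follows immediately from formula (\ref{varphifree}) together with the definition of $q$, which satisfies $q(1)=0$; hence $\ev(\overline\varphi)=\varphi(q(1))=0$. Regarding $\ev$ as a cdgl morphism between the perturbed algebras $(\Hom(\quic(L'),L),D_{\overline\varphi})\to (L,d_0)=(L,d)$, and using that $L$ is connected so $L^0=L$, the restriction to connected covers yields exactly the cdgl morphism
$$
\Hom(\quic(L'),L)^{\overline\varphi}\longrightarrow L
$$
appearing in the statement.

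Next I would apply the realization functor. By (\ref{realizasimp}) one has
$$
\langle \Hom(\quic(L'),L)^{\overline\varphi}\rangle \simeq \langle \Hom(\quic(L'),L)\rangle^{\overline\varphi},
$$
and by Corollary \ref{cor1} the right-hand side is weakly equivalent to the path component of $\Map(X,\langle L\rangle)$ labelled by the MC element $\overline\varphi$. The bijection $\pi_0\Map(X,\langle L\rangle)\cong \widetilde\mc\bigl(\Hom(\quic(L'),L)\bigr)$ of Remark \ref{remark} identifies $[\overline\varphi]$ with $[f]$, so this component is precisely $\map_f(X,\langle L\rangle)$. Naturality of the homotopy equivalences in Corollary \ref{cor1} with respect to $\ev$ then delivers the commutative square needed.

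The only delicate point is verifying that the $\pi_0$-bijection of Remark \ref{remark} does coincide with the one induced by the equivalence of Corollary \ref{cor1}, so that $[f]$ and $[\overline\varphi]$ really do index the same path component. This is essentially built into the derivation of Corollary \ref{cor1}: under the adjunction $A\otimesc L\cong \Hom(A^\sharp,L)$ and the quasi-isomorphism $A^\sharp\simeq \quic(L')$ used there, a pointed map $f$ is transported to a cdgl morphism $\varphi\colon L'\to L$, and then to the MC element $\varphi\circ q=\overline\varphi$ after extending through $q$ to all of $\quic(L')$.
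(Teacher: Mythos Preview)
Your proof is correct and follows essentially the same route as the paper: both invoke (\ref{realizasimp}) to identify $\langle\Hom(\quic(L'),L)^{\overline\varphi}\rangle$ with the path component $\langle\Hom(\quic(L'),L)\rangle^{\overline\varphi}$, and then appeal to Corollary \ref{cor1} and Remark \ref{remark} to match this with $\map_f(X,\langle L\rangle)$. Your version is simply more explicit, in particular the observation that $\ev(\overline\varphi)=\varphi(q(1))=0$ (which justifies why $\ev$ restricts to a map of connected covers landing in $L=L^0$) and the discussion of why the $\pi_0$-bijection is the expected one; the paper's proof leaves these points implicit.
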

\begin{proof} Recall that $\langle\Hom(\quic(L'),L)^{\overline\varphi}\rangle\simeq\langle\Hom(\quic(L'),L)\rangle^{\overline\varphi} $. To finish, note that in view of Corollary \ref{cor1} and Remark \ref{remark}, $\langle\Hom(\quic(L'),L)\rangle^{\overline\varphi}\to\langle L\rangle$ is of the homotopy type of $\map_f(X,\langle L\rangle)\stackrel{\ev}{\longrightarrow} \langle L\rangle$.
\end{proof}

 \begin{rem}\label{remark3} (1) Observe that $\Hom(\quic(L'),L)^{\overline\varphi}\to L$ is not in general a surjective morphism, i.e., it is not a cdgl fibration. Indeed, see (\ref{iso}), in $$(\Hom(\quic(L'),L),D_{\overline\varphi})\cong (\Hom(\overline\quic(L'),L)\timest L, D_{\overline\varphi}),$$
$D_{\overline\varphi}(x)=dx+[\overline\varphi,x]=dx-(-1)^{|x|}\ad_x\circ\overline\varphi$ for any $x\in L$. Therefore, elements of $L_0$ are not, in general, $D_{\overline\varphi}$-cycles and thus, they do not belong to $\Hom(\quic(L'),L)^{\overline\varphi}$.

(2) Note also that, in view of  Corollary \ref{cor1} and Remark \ref{remark}, the fibre of the map $\langle\Hom(\quic(L'),L)\rangle^{\overline\varphi}\to\langle L\rangle$ is the non-connected complex
$$
{\amalg}_{[\psi]}\,\langle  \Hom(\overline\quic(L'),L)\rangle^{\overline\psi}
$$
where: $[\psi]$ runs through homotopy classes of morphisms $\psi\colon L'\to L$  such that, when considered $\overline\psi$ in $\Hom(\quic(L),L)$, this is a $\mc$ element gauge related to $\overline\varphi$. A short computation shows that this amounts to the existence of $x\in L_0$ such that $e^{\ad_x}\circ\varphi=\psi$. In other terms, and with the notation in Definition \ref{granaccion}, $[\psi]=[x]\bulletchi[\varphi]$ with $[x]\in H_0(L)$. Each of these components is of the homotopy type of the corresponding component of $\map^*_f(X,\langle L\rangle)$.
\end{rem}

\begin{rem}\label{remark5} In all of the above, if $L$ is a Lie model of a connected simplicial set $Y$ of finite type, then the fibrations realized in Theorems \ref{modelohom}, Corollary \ref{cor1} and Proposition \ref{escapa} are, respectively, the fibration
$$
\Map^*(X,\bq_\infty Y)\longrightarrow \Map(X,\bq_\infty Y)\stackrel{\ev}{\longrightarrow} \bq_\infty Y
$$
and each of its components.
\end{rem}

\section{Derivations models of evaluation fibrations}\label{deriva}

The  results in the past section can be expressed in terms of derivations. To avoid excessive technicalities we assume $X$ nilpotent in what follows. Again $L'$ denotes a Lie model of $X$ and $L$ is a connected cdgl. Given a cdgl morphism $\varphi\colon L'\to L$,
consider the dgl morphism,
$$
\widetilde\varphi=\varphi\alpha\colon \quil\quic(L')\longrightarrow L,
$$
with $\alpha$  the adjunction map, see \S\ref{prelimi}. Next, endow
$$
\des\derr_{\widetilde\varphi}(\quil\quic(L'),L)
$$
with the desuspended differential,
$$
D(\des\theta)=-\des [d,\theta]=-\des(d\circ\theta-(-1)^{|\theta|}\theta\circ d),
$$
and
 a Lie bracket,
$$
[s^{-1}\gamma,\des\eta]=\des\theta,
$$
where $\theta$ is first defined on $\des\overline{\quic}(L')$ by
$$
-[\,\,,\,]\circ(\gamma\des\otimes\eta\des)\circ\overline\Delta\circ s.
$$
and then extended to $\quil\quic(L')$ as a $\widetilde\varphi$-derivation.

Consider  the twisted product
$$
(\des\derr_{\widetilde\varphi}(\quil\quic(L'),L)\timest L,D)
$$
where  $\des\derr_{\widetilde\varphi}(\quil\quic(L'),L)$ is a sub dgl and
$$[x,\des\theta]=(-1)^{|x|}\des(\ad_x\circ \,\theta),\qquad Dx=d x- \des(\ad_x\circ\,\overline\varphi\circ s), \quad x\in L,
$$
Here, $\ad_x\circ\,\theta$ and  $\ad_x\circ\, {\overline\varphi}\circ s$ denote the  $\widetilde\varphi$-derivations which are these compositions on $\des\overline\quic(L')$, and $d$ is the differential on $L$.
Then, Corollary \ref{cor1} translates to the following which extends \cite[Theorem 3]{bfm}:

\begin{theorem}\label{mappingder} For any cdgl morphism $\varphi\colon L'\to L$,
\begin{equation}\label{isoderi}
0\to\des\derr_{\widetilde\varphi}(\quil\quic(L'),L)\longrightarrow \des\derr_{\widetilde\varphi}(\quil\quic(L'),L)\,\timest\, L\longrightarrow L\to 0
\end{equation}
is a cdgl short exact sequence whose realization
has the homotopy type of the fibration sequence
\begin{equation}\label{fibratop}
\Map^*(X,\langle L\rangle)\longrightarrow \Map(X,\langle L\rangle)\stackrel{\ev}{\longrightarrow} \langle L\rangle.
\end{equation}
\end{theorem}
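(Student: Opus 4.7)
The strategy is to realise \eqref{isoderi} as an isomorphic copy (up to a harmless Maurer--Cartan perturbation) of the short exact sequence produced by Corollary~\ref{cor1}, and then invoke that corollary. The bridge is the standard identification between derivations out of a cofibrant (free) dgl and a space of homomorphisms, applied to the cobar construction $\quil\quic(L')$.

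The first step is to build a degree-preserving linear bijection
$$
\Phi\colon \des\derr_{\widetilde\varphi}(\quil\quic(L'),L) \xrightarrow{\ \cong\ } \Hom(\overline\quic(L'),L),
$$
defined by $\Phi(\des\theta)(c)=\theta(\des c)$. It is bijective because $\quil\quic(L')=(\lib(\des\overline\quic(L')),d_1+d_2)$ is free as a graded Lie algebra on $\des\overline\quic(L')$, so any $\widetilde\varphi$-derivation is determined by its restriction to generators; the desuspension on the left accounts for the degree shift between generators and cogenerators. The bracket on the left-hand side is defined in the excerpt precisely through $-[\,,\,]\circ(\gamma\des\otimes\eta\des)\circ\overline\Delta\circ s$, which is, up to the suspension signs, the convolution bracket on $\Hom(\overline\quic(L'),L)$; hence $\Phi$ is a graded Lie isomorphism.

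Next, check the differential. On generators $\des c\in\des\overline\quic(L')$ the cobar differential decomposes as $d_1(\des c)=-\des dc$ and $d_2(\des c)=\tfrac{1}{2}\sum(-1)^{|a_i|}[\des a_i,\des b_i]$. Applying a $\widetilde\varphi$-derivation $\theta$ and using the derivation rule with $\widetilde\varphi=\varphi\circ\alpha$, the $d_1$-contribution yields the ordinary $\Hom$-differential, while the $d_2$-contribution produces exactly the convolution bracket $[\overline\varphi,\Phi(\des\theta)]$, since $\widetilde\varphi\circ\des=\overline\varphi$ on generators. Consequently $\Phi$ intertwines $D(\des\theta)=-\des[d,\theta]$ with the differential $D_{\overline\varphi}=D+[\overline\varphi,-]$ on $\Hom(\overline\quic(L'),L)$, i.e.\ with the differential perturbed by the Maurer--Cartan element $\overline\varphi$ of \eqref{varphipointed}. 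Extending $\Phi$ by the identity on $L$, the twist $[x,\des\theta]=(-1)^{|x|}\des(\ad_x\circ\theta)$ on the left matches the twist $[x,f]=\ad_x\circ f$ on the right, and the extra term $-\des(\ad_x\circ\overline\varphi\circ s)$ in $Dx$ on the left is precisely the perturbation $[\overline\varphi,x]$ in the twisted product on the right. Thus \eqref{isoderi} is isomorphic, as a short exact sequence of cdgl's, to
$$
0\to\bigl(\Hom(\overline\quic(L'),L),D_{\overline\varphi}\bigr)\to\bigl(\Hom(\overline\quic(L'),L)\timest L,D_{\overline\varphi}\bigr)\to L\to 0.
$$

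Exactness and completeness transfer along $\Phi$ from the right-hand sequence (already established in Remark~\ref{rem1} and Corollary~\ref{cor1}). Finally, since the perturbed and unperturbed cdgls have the same realization by~\eqref{realizaigual}, Corollary~\ref{cor1} identifies the realization of \eqref{isoderi} with the evaluation fibration \eqref{fibratop}. The main obstacle in executing this plan is Step~3: tracking suspension, desuspension, and Koszul signs carefully enough to certify on the nose that the quadratic part $d_2$ of the cobar differential, pushed through $\widetilde\varphi=\varphi\circ\alpha$ and then desuspended, equals the adjoint action of the Maurer--Cartan element $\overline\varphi$; once that calculation is settled the rest is formal.
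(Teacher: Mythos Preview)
Your approach is essentially the paper's own proof: construct an explicit dgl isomorphism between $\des\derr_{\widetilde\varphi}(\quil\quic(L'),L)$ and $(\Hom(\overline\quic(L'),L),D_{\overline\varphi})$ by restricting derivations to generators, extend it to the twisted products, and then invoke Corollary~\ref{cor1} together with the perturbation-invariance of realization~\eqref{realizaigual}. The only discrepancy is that the paper's isomorphism is $\Gamma(\des\theta)(c)=(-1)^{|\theta|}\theta(\des c)$, carrying an extra sign compared with your $\Phi$; this sign is exactly what makes the differentials match on the nose in the explicit computation the paper carries out, so when you execute your Step~3 you will be forced to insert it.
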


\begin{proof}
As perturbing a cdgl does not interfere with its realization, see (\ref{realizaigual}), Corollary \ref{cor1} amounts to say that (\ref{fibratop}) is homotopy equivalent to the realization of the cdgl sequence,
$$
0\to(\Hom(\overline\quic(L'),L),D_{\overline\varphi})\longrightarrow (\Hom(\overline\quic(L'),L)\timest L,D_{\overline\varphi})\stackrel{\ev}{\longrightarrow} L\to 0.
$$
 Recall that $D_{\overline\varphi}$ stands for the perturbed differential by the MC elements $\overline\varphi$ as in (\ref{varphipointed}) and (\ref{varphifree}) respectively. Next, as in \cite[Theorem 3]{bfm} we show that
\begin{equation}\label{bueniso}
      \Gamma\colon \des\derr_{\widetilde\varphi}(\quil\quic(L'),L)
      \stackrel{\cong}{\longrightarrow}
      ( \Hom(\overline\quic(L'),L),D_{\overline\varphi}),\quad\Gamma(s^{-1}\theta)(c)=(-1)^{|\theta|}\theta(s^{-1}c),
\end{equation}
   is a dgl isomorphism, which then exhibit  $\des\derr_{\widetilde\varphi}(\quil\quic(L'),L)$ as a complete dgl.

   It is trivially a linear isomorphism and a straightforward computation shows it commutes with the Lie brackets. It remains to prove that $\Gamma$ commutes with differentials. For it, let $\theta\in \derr_{\widetilde\varphi}(\quil\quic(L'),L)$ and $c\in \overline\quic(L')$ with $\overline\Delta(c)=\sum_ic_i\otimes c'_i$.

On the one hand,
$$
\begin{aligned}
\bigl(D_\varphi\Gamma(s^{-1}\theta)\bigr)(c)=&d\bigl(\Gamma(s^{-1}\theta)(c)\bigr)-(-1)^{|\theta|+1}
\Gamma (s^{-1}\theta)(dc)+[\overline\varphi,\Gamma (s^{-1}\theta)](c)\\
=&(-1)^{|\theta|}d\theta(s^{-1}c)+\theta(s^{-1}dc) + \sum_i (-1)^{|c_i|(|\theta|+1)+|\theta|}\sum_i[\overline\varphi(c_i),\theta(s^{-1}c'_i)].\\
\end{aligned}
$$

On the other hand,
$$
\begin{aligned}
\Gamma
\bigl(D(s^{-1}\theta)\bigr)(c)=&(-1)^{|\theta|}(d\circ\theta-(-1)^{|\theta|}\theta\circ d)(\des c)\\
=&(-1)^{|\theta|}d\theta(s^{-1}c)-\theta(d_1\des c)-\theta(d_2\des c)\\
=&(-1)^{|\theta|}d\theta(s^{-1}c)+\theta(\des dc)-\theta\bigl(\frac{1}{2}\sum_i
(-1)^{|c_i|}[s^{-1}c_i,s^{-1}c'_i]\bigr)\\
=&(-1)^{|\theta|}d\theta(s^{-1}c)+\theta(\des dc)+\sum_i(-1)^{|c_i|+|\theta|(|c_i|+1)}[\overline\varphi(c_i),\theta(s^{-1}c'_i)],
\end{aligned}
$$
and both expressions coincide.
In the last equality we use that $\quic(L')$ is cocommutative, $\theta$ is a $\widetilde\varphi$-derivation and $\widetilde\varphi(\des c)=-\overline\varphi(c)$ for any $c\in\quic(L')$.

To finish, note that the isomorphism $\Gamma$ trivially extends to
\begin{equation}\label{isoimp}
(\Hom(\overline\quic(L'),L)\timest L,D_{\overline\varphi})\cong(\des\derr_{\widetilde\varphi}(\quil\quic(L'),L)\,\timest\, L,D).
\end{equation}
Note  that, in view of the isomorphisms (\ref{bueniso}) and (\ref{isoimp}), together with Remark \ref{rem1}, the objects in (\ref{isoderi}) are cdgl's and the maps are cdgl morphisms.
\end{proof}

As a consequence we can easily express each component of (\ref{fibratop}) in terms of derivations.
Again, let $\Map_f(X,\langle L\rangle)$ denote the component of $\Map(X,\langle L\rangle)$ containing the map $f\colon X\to\langle L\rangle$, which corresponds, via Remark \ref{remark}, to an element in $\{L',L\}/H_0(L)$ represented by the homotopy class of a cdgl morphism $\varphi\colon L'\to L$.

\begin{corollary}\label{cor4}
The  realization of the  cdgl morphism
$$(\des\derr_{\widetilde\varphi}(\quil\quic(L'),L)\,\timest\, L,D)^0\longrightarrow L
$$
has the homotopy type of the fibration
$$
\Map_f(X,\langle L\rangle)\stackrel{\ev}{\longrightarrow} \langle L\rangle.
$$
\end{corollary}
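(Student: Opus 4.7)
The plan is to combine two ingredients that are already essentially in place: the cdgl isomorphism $\Gamma$ established in the proof of Theorem \ref{mappingder}, which identifies the perturbed $\Hom$-cdgl with the derivation cdgl, and Proposition \ref{escapa}, which identifies the fiberwise realization of the component at $\overline\varphi$ with $\Map_f(X,\langle L\rangle)\stackrel{\ev}{\to}\langle L\rangle$. The argument is then a matter of checking that the two constructions commute with passage to the $(\cdot)^0$ connected cover and that the evaluation map to $L$ matches up under $\Gamma$.

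More precisely, I would proceed as follows. First, recall from Remark \ref{rem1} that $\Hom(\quic(L'),L)\cong\Hom(\overline\quic(L'),L)\timest L$. The proof of Theorem \ref{mappingder} upgrades the isomorphism $\Gamma$ in (\ref{bueniso}) to the isomorphism (\ref{isoimp}), namely
\begin{equation*}
(\Hom(\overline\quic(L'),L)\timest L,\,D_{\overline\varphi})\;\cong\;(\des\derr_{\widetilde\varphi}(\quil\quic(L'),L)\,\timest\, L,\,D),
\end{equation*}
so together these yield an isomorphism of cdgl's
\begin{equation*}
(\Hom(\quic(L'),L),D_{\overline\varphi})\;\cong\;(\des\derr_{\widetilde\varphi}(\quil\quic(L'),L)\,\timest\, L,\,D).
\end{equation*}
Taking connected covers on both sides — which is a functorial operation on cdgl's with perturbed differential — and recalling that by definition $\Hom(\quic(L'),L)^{\overline\varphi}=(\Hom(\quic(L'),L),D_{\overline\varphi})^0$, we obtain an isomorphism
\begin{equation*}
\Hom(\quic(L'),L)^{\overline\varphi}\;\cong\;(\des\derr_{\widetilde\varphi}(\quil\quic(L'),L)\,\timest\, L,\,D)^0.
\end{equation*}

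Next I would check that the evaluation morphism to $L$ is preserved under $\Gamma$: on the $\Hom$ side this is evaluation of a morphism at $1\in\bq\subset\quic(L')$, whereas on the derivation side it is the projection from the twisted product onto $L$. These agree because in the definition of the twisted product $\des\derr_{\widetilde\varphi}(\quil\quic(L'),L)\,\timest\, L$ the factor $L$ is glued in exactly so as to correspond, via the splitting $\Hom(\quic(L'),L)\cong\Hom(\overline\quic(L'),L)\timest L$, to evaluation at $1$. Consequently the composite isomorphism fits into a commuting diagram over $L$.

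Finally, I apply realization. Since $\langle\cdot\rangle$ is invariant under perturbations (see (\ref{realizaigual})) and isomorphisms, the realization of $(\des\derr_{\widetilde\varphi}(\quil\quic(L'),L)\,\timest\, L,D)^0\to L$ coincides with that of $\Hom(\quic(L'),L)^{\overline\varphi}\to L$, which by Proposition \ref{escapa} has the homotopy type of $\Map_f(X,\langle L\rangle)\stackrel{\ev}{\to}\langle L\rangle$. The only genuinely delicate point — and the one I would expect to verify most carefully — is the compatibility of $\Gamma$ with the twisted-product structure, i.e., that the bracket $[x,\des\theta]=(-1)^{|x|}\des(\ad_x\circ\theta)$ and the perturbed differential $Dx=dx-\des(\ad_x\circ\overline\varphi\circ s)$ on the $L$-summand correspond under $\Gamma$ to the bracket $[x,f]=\ad_x\circ f$ and the differential $D_{\overline\varphi}$ on the $\Hom$-side; this is a direct unwinding of the definitions but requires keeping track of signs, and it secures that the passage to connected covers and the map to $L$ behave as claimed.
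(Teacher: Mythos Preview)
Your proposal is correct and follows exactly the paper's approach: the paper's proof is the single sentence ``This is an immediate application of Proposition~\ref{escapa} and the isomorphism~(\ref{isoimp}),'' and you have simply unpacked what that immediate application entails, including the passage to connected covers and the compatibility of the projection to $L$ under $\Gamma$.
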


Recall from \S\ref{prelimi} that given $M$ any cdgl, $M^0$ stands for its connected cover, or equivalently, its component at the MC element $0$.
\begin{proof} This is an immediate application of
 Proposition \ref{escapa} and the  isomorphism   (\ref{isoimp}).
\end{proof}
We finish by describing, also in terms of derivations, the homotopy long exact sequence of the evaluation fibration. For it, fix a pointed map $f\colon X\to\langle L\rangle$ as the base point of both, $\map_f^*(X,\langle L \rangle)$ and $\map_f(X,\langle L\rangle)$

On the other hand, let $L'$ be a Lie model of $X$ and let $\varphi\colon L'\to L$ be a cdgl morphism whose homotopy class in $\{L',L\}$ identifies the pointed homotopy class $[f]^*$. Recall again that, by Remark \ref{remark},  this morphism correspond to the free homotopy class $[f]$  in the orbit set $\{L',L\}/H_0(L)$. Consider the twisted chain complex
$$
(\derr_\varphi(L',L)\widetilde\times sL,D)
$$
which has $\derr_\varphi(L',L)$ as subcomplex and
$$
Dsx=- sdx+  ad_x\circ\varphi,\quad x\in L.
$$
Then, we prove the following which, in particular, recovers \cite[Theorem 12.35]{bfmt0}:
\begin{theorem}\label{corhom} For the chosen basepoints,
the homotopy long exact sequence of the fibration
\begin{equation}\label{puf}
\Map^*_f(X,\langle L\rangle)\longrightarrow \Map_f(X,\langle L\rangle)\stackrel{\ev}{\longrightarrow} \langle L\rangle
\end{equation}
is isomorphic to the homology long exact sequence of
$$
0\to\derr_{\varphi}(L',L)^{0}\longrightarrow (\derr_{\varphi}(L',L)\,\timest\, sL)^0\longrightarrow sL\to 0.
$$
In particular,
$$
\pi_*(\Map_f^*(X,\langle L\rangle),[f]^*)\cong H_*\bigl(\derr_\varphi(L',L)\bigr)\,\,\text{and}\,\, \pi_*\Map_f(X,\langle L\rangle)\cong H_*(\derr_\varphi(L',L)\timest sL).
$$
\end{theorem}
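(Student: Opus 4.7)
The plan is to convert the fibration (\ref{puf}) into a sequence of chain complexes using Corollary \ref{cor4} together with the natural isomorphism $\rho_n\colon \pi_n\langle M\rangle\cong H_{n-1}(M)$, and then simplify the resulting complexes by degree-shifting and by replacing $\quil\quic(L')$ with the minimal model $L'$ via the adjunction $\alpha$.

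First I would identify the geometric fibration (\ref{puf}) with the realization of the relevant cdgl sequence. By Corollary \ref{cor4} together with Remark \ref{remark3}(2), (\ref{puf}) is homotopy equivalent to the realization of (the connected cover of) the short exact sequence
$$0\to\des\derr_{\widetilde\varphi}(\quil\quic(L'),L) \longrightarrow \des\derr_{\widetilde\varphi}(\quil\quic(L'),L)\timest L \longrightarrow L \to 0,$$
with the fiber component at $[f]^*$ realized by $\des\derr_{\widetilde\varphi}(\quil\quic(L'),L)^0$. The isomorphism $\rho_n$ then transforms the homotopy LES of (\ref{puf}) into the homology LES of this sequence (after restricting to connected covers). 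Using the identity $H_{n-1}(\des M)=H_n(M)$, the same long exact sequence is repackaged as the homology LES of
$$0\to\derr_{\widetilde\varphi}(\quil\quic(L'),L)^0 \longrightarrow (\derr_{\widetilde\varphi}(\quil\quic(L'),L)\timest sL)^0 \longrightarrow sL \to 0,$$
after absorbing the desuspension on the quotient into a suspension $L\mapsto sL$.

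Next I would use the adjunction quasi-isomorphism $\alpha\colon\quil\quic(L')\to L'$ (see \S\ref{prelimi}) to replace $\quil\quic(L')$ with the minimal model $L'$. Precomposition with $\alpha$ gives a chain map
$$\alpha^*\colon \derr_\varphi(L',L)\longrightarrow \derr_{\widetilde\varphi}(\quil\quic(L'),L),\qquad\theta\longmapsto \theta\circ\alpha,$$
which, combined with the identity on $sL$, produces a morphism of short exact sequences from the sequence in the statement to the one above. Once $\alpha^*$ is known to be a quasi-isomorphism, the five lemma yields the claimed isomorphism of long exact sequences. A direct computation then confirms the connecting morphism: a cycle $sx\in sL$ lifts to $sx$ in the middle term, whose $D$-differential is $\ad_x\circ\varphi\in\derr_\varphi(L',L)$, matching the geometric boundary transporting $\pi_n\langle L\rangle$ to $\pi_{n-1}\Map_f^*$ via the holonomy action.

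The main obstacle is verifying that $\alpha^*$ is a quasi-isomorphism. My approach would be to leverage the freeness of $L'=(\libc(V),d)$ and of $\quil\quic(L')$ as complete free Lie algebras: derivations out of either are determined by values on generators, so $\derr_\varphi(L',L)\cong\Hom(V,L)$ and $\derr_{\widetilde\varphi}(\quil\quic(L'),L)\cong\Hom(\des\overline\quic(L'),L)$ as graded vector spaces, and through $\Gamma$ of Theorem \ref{mappingder} the latter identifies, up to suspension, with $\Hom(\overline\quic(L'),L)$ carrying the perturbed differential $D_{\overline\varphi}$. The required quasi-isomorphism then follows from a standard cofibrant-replacement argument, since $\alpha$ is a weak equivalence between cofibrant cdgl's so that precomposition induces a quasi-isomorphism on derivation complexes; this can also be deduced from the invariance properties spelled out in Remark \ref{rem1} together with the generalized Goldman--Millson theorem applied filtration-wise.
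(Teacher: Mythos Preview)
Your approach is essentially the same as the paper's: both arguments pass through Corollary \ref{cor4} and Remark \ref{remark3}(2) to model the fibration (\ref{puf}) by the short exact sequence of $\des\derr_{\widetilde\varphi}$ complexes, then suspend to obtain the $\derr_{\widetilde\varphi}\timest sL$ sequence, and finally replace $\quil\quic(L')$ by $L'$ via precomposition with the adjunction $\alpha$.

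The one place where your write-up is looser than the paper's is the justification that $\alpha^*\colon\derr_\varphi(L',L)\to\derr_{\widetilde\varphi}(\quil\quic(L'),L)$ is a quasi-isomorphism. The paper simply invokes \cite[Lemma 3.4]{bfm}, which is exactly the ``standard cofibrant-replacement'' fact you allude to (a quasi-isomorphism between free dgl's induces a quasi-isomorphism on $\varphi$-derivation complexes). Your alternative suggestion via Remark \ref{rem1} and the generalized Goldman--Millson theorem is not quite on target: those results concern weak equivalences of \emph{cdgl's}, whereas here one only needs a quasi-isomorphism of chain complexes of derivations, and $\derr_\varphi(L',L)$ carries no natural Lie structure to which Goldman--Millson applies. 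So keep the cofibrant-replacement justification and drop the Goldman--Millson detour.
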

\begin{proof} On the one hand, since the adjunction map $\alpha\colon\quil\quic(L')\stackrel{\simeq}{\to}L'$ is a quasi-isomorphism and $L'$ is a Lie model (and thus cofibrant), the map of chain complexes,
\begin{equation}\label{alfa}
\alpha^*\colon \derr_\varphi(L',L)\stackrel{\simeq}{\longrightarrow}\derr_{\widetilde\varphi}(\quil\quic(L'),L),
\end{equation}
is also a quasi-isomorphism, see for instance \cite[Lemma 6]{bfm}.
Extend it  to
 another quasi-isomorphism
\begin{equation}\label{quasi}
\alpha^*\times\id_{sL}\colon (\derr_\varphi(L',L)\widetilde\times sL,D)\stackrel{\simeq}{\longrightarrow}(\derr_{\widetilde\varphi}(\quil\quic(L'),L)\widetilde\times sL,D')
\end{equation}
by setting
$$
D'sx=-sdx+\theta,\quad x\in L,
$$
where $\theta$ is the $\widetilde\varphi$-derivation which extends the map $\ad_x\circ\overline\varphi\circ s\colon s^{-1}\quic(L')\to L$. In particular,  $\theta$
 coincides with $\ad_x\circ\varphi$ on $L'\cong s^{-1}s L'$. This shows that (\ref{quasi}) is indeed a chain map and thus, a quasi-isomorphism. From (\ref{alfa}) and (\ref{quasi}) we obtain a commutative diagram of chain complexes  where the top sequence is the suspension of (\ref{isoderi}):
$$
\xymatrix{
0\ar[r]&\derr_{\widetilde\varphi}(\quil\quic(L'),L)\ar[r]&
(\derr_{\widetilde\varphi}(\quil\quic(L'),L)\widetilde\times sL,D')\ar[r]&sL\ar[r]&0\\
0\ar[r]&\derr_\varphi(L',L)\ar[u]^\simeq\ar[r]&
 (\derr_\varphi(L',L)\widetilde\times sL,D)\ar[u]^\simeq\ar[r]&
sL\ar[u]\ar[r]&0\\}
$$
Thus, by Corollary \ref{cor4}, the homology long exact sequence of the component at $0$ of the top row is precisely the homotopy long exact sequence of  (\ref{puf}). For it note that, by Remark \ref{remark3}(2), the component of $\map_f^*(X,\langle L\rangle)$ containing $[f]^*$ has the homotopy type of $\langle \Hom(\overline\quic(L'),L)^{\overline\varphi}\rangle$ and $ (\Hom(\overline\quic(L'),L),D_{\overline\varphi})\cong s^{-1}\derr_{\widetilde\varphi}(\quil\quic(L'),L)$.

\end{proof}

\section{Complete Lie algebras of derivations and twisted products}\label{comple}

Given $L$ a dgl (non necessarily complete in principle), the following twisted products  are key tools for our main results:

Given a cdgc $C$ consider the twisted product
\begin{equation}\label{clave}
\Hom(C,L)\longrightarrow \Hom(C,L)\,\timest\,\derr L\longrightarrow\derr L
\end{equation}
in which both terms are sub dgl's and
$$
[\theta,f]=\theta\circ f,\quad\theta\in\derr L,\quad f\in \Hom(C,L).
$$
This  dgl sequence, and its ``dual'' $\coder(C)\timest \Hom(C,L)$, have already proven to be useful in the modeling of certain simply connected spaces of homotopy automorphisms, see \cite[\S3]{ber2} and \cite[\S4]{ber3}.

Also,  consider the twisted product
\begin{equation}\label{slslrev1}
\derr L\longrightarrow \derr L \timest sL\longrightarrow  sL
\end{equation}
 where $sL$ is an abelian Lie algebra and
$$
Dsx=-sdx+\ad_x,\quad[\theta,sx]=(-1)^{|\theta|}s\theta(x),\quad x\in L,\quad \theta\in \derr L.
$$
Finally, let
\begin{equation}\label{slslrev2}
L\longrightarrow L\timest\derr L\longrightarrow \derr L
\end{equation}
be the twisted product in which  both terms are sub dgl's and $[\theta,x]= \theta(x)$ for any $\theta\in\derr L$ and $x\in L$.

If $L$ is a cdgl we observed in Remark \ref{rem1} that $\Hom(C,L)$  and any of its isomorphic forms, see for instance (\ref{iso}) or (\ref{isoimp}), are complete dgl's. However, $\derr L$ is not complete in general and, even if it is, $\Hom(C,L)\timest \derr L$ may fail to be so. The same applies to the twisted products $\derr L \timest sL$ and $L\timest\derr L$ just defined.

The following illustrates this situation.

\begin{ex} \label{example1} (1) Consider the cdgl $L=(\libc(x,y),0)$ where $|x|=|y|=0$. Observe that $\derr L=\derr_0 L$ and define $\theta,\gamma\in\derr(L)$  by
$$
\theta(x)=y,\quad \theta(y)=0,\qquad\gamma(x)=-\frac{1}{2}x, \quad\gamma(y)=\frac{1}{2}y.
$$
A short computation shows that $[\gamma,\theta]=\theta$ and thus $\ad^n_\gamma(\theta)=\theta$ for any $n\ge 1$. In particular $\theta$ lives in the kernel of  the canonical map $\derr L\to \varprojlim_{n\ge 1 }\derr L/(\derr L)^n$ which, therefore, prevents $\derr L$ to be complete.

(2) On the other hand, consider an odd dimensional sphere $S^n$ whose model is the abelian Lie algebra $L=(\lib(x),0)$ with $x$ of degree $n-1$. In this case $\quic(L)=(\land sx,0)$. Then, $\Map(S^n_\bq,S^n_\bq)$ is modeled by $\Hom(\quic(L),L)$ which is an abelian Lie algebra generated by $x$, corresponding to $1\mapsto x$, and an element $z$ of degree $-1$, corresponding to $sx\mapsto x$. On the other hand $\derr L$ is  an abelian lie algebra generated by $\id_L$, a degree $0$ derivation which we denote by $\theta$. Hence, in this case,  (\ref{clave}) takes the form
$$
\Span\{x,z\}\longrightarrow \Span\{x,z,\theta\}\longrightarrow \Span\{\theta\}
$$
where, in the middle, $[\theta,x]=x$ and  $[\theta,z]=z$. Observe then that the bracket does not respect the usual filtration in the non twisted product. Note also that, even though $\Hom(\quic(L),L)$ and $\derr L$ are obviously complete as they are abelian, $\Hom(\quic(L),L)\,\timest\, \derr L$ is not (with respect to the usual bracket filtration) as its completion is easily seen to be just  $\Span\{\theta\}$. That is, the completion of (\ref{clave}) becomes
$$
\Hom(\quic(L),L)\stackrel{0}{\longrightarrow} \derr L\stackrel{\id}{\longrightarrow} \derr L.
$$
 This  also illustrates that the completion functor is not left exact in general.

 (3) Finally, with $L$ as in (2) above, observe that the twisted dgl $\derr L\timest sL$ becomes the dgl
 $\Span\{\theta,sx\}$, with zero differential and
$[\theta,sx]=sx$, whose  completion  is  $\derr L=\Span\{\theta\}$. An analogous argument shows that $L\timest\derr L$ fails also to be complete with respect to the usual filtration.

\end{ex}

The purpose of this section is to overcome these obstacles by imposing some restrictions which are sufficiently mild for our goals.

\medskip

From now on we fix $L=(\libc(V),d)$  a connected, minimal, free cdgl and  a finite filtration of  $V$ by graded subspaces:
\begin{equation}\label{centralv}
 V=V^0\supset\dots\supset V^i\supset V^{i+1}\supset\dots\supset V^q=0,
\end{equation}

We first refine the usual filtration in $L$ by considering,
for $n\ge 1$ and $p\ge0$,
$$
\libc^{n,p}(V)=\Span\{\bigl[a_1,[a_2,[\dots,[a_{n-1},a_n]\bigr] \dots\bigr]\in\libc^n(V),\,a_i\in V^{\alpha_i}\,\text{and}\,\,{\textstyle \sum_{i=1}^n}\alpha_i=p\}.
$$
Observe that $\libc^{n,p}(V)=0$ for $p\ge nq$. Then, for $n\ge 1$ and $0\le p\le nq-1$, define
$$
F^{n,p}=\libc^{n,p}(V)\oplus \libc^{\geq n+1}(V)
$$
and note that,
$$
\begin{aligned}
\libc(V)&=F^{1,0}\supset F^{ 1,1}\supset\dots\supset F^{1,q-1} \\
&\supset F^{ 2,0}\supset F^{ 2,1} \supset\dots\supset F^{2,2q-1}\\
&...............................................\\
&\supset F^{n,0}\supset F^{n,1}\supset\dots\supset F^{n,nq-1}\\
&...............................................\\
\end{aligned}
$$
Remark that  $F^{ n,p}$
 ranks
\begin{equation}\label{monorden}
m=q+\dots+(n-1)q+p+1=\frac{(n-1)n q}{2}+p+1
\end{equation}
in the order given by this chain of inclusions.
\begin{definition}\label{filtranueva} For $n,p$ and $m$ as above define
$$
F^m=F^{n,p}.
$$
\end{definition}

\begin{proposition}\label{prop:filtrationL} The sequence $\{F^m\}_{m\geq 1}$ is a filtration of $L$. Furthermore,  $L$ is complete with respect to this filtration.
\end{proposition}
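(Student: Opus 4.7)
The plan is to verify the three requirements of a filtration (decreasing chain, differential Lie ideals, bracket compatibility), and then deduce completeness by showing that $\{F^m\}$ refines the lower central series, with respect to which $L$ is already complete.

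First I would check that the chain is decreasing. Within a fixed length $n$, the inclusion $F^{n,p}\supset F^{n,p+1}$ reduces to $\widehat{\mathbb L}^{n,p}(V)\supset \widehat{\mathbb L}^{n,p+1}(V)$, which holds because $V^{\alpha+1}\subset V^\alpha$ allows one to reinterpret any iterated bracket of total weight $p+1$ as one of weight $p$. For the transition between levels, since $V^0=V$, one has $\widehat{\mathbb L}^{n+1,0}(V)=\widehat{\mathbb L}^{n+1}(V)$, so $F^{n+1,0}=\widehat{\mathbb L}^{\ge n+1}(V)\subset F^{n,nq-1}$.

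Next I would verify $[F^m,F^{m'}]\subset F^{m+m'}$, which simultaneously gives the Lie ideal condition since $F^1=L$. Writing elements of $F^{n,p}$ and $F^{n',p'}$ in the form $x=x_1+x_2$ and $y=y_1+y_2$ with $x_1\in\widehat{\mathbb L}^{n,p}$, $x_2\in\widehat{\mathbb L}^{\ge n+1}$ (and similarly for $y$), the bracket $[x_1,y_1]$ lies in $\widehat{\mathbb L}^{n+n',p+p'}(V)$ because both length and weight are additive, while the three remaining terms lie in $\widehat{\mathbb L}^{\ge n+n'+1}(V)$. Hence $[x,y]\in F^{n+n',p+p'}$. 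It remains to check that under the bijection $m\leftrightarrow(n,p)$ of Definition \ref{filtranueva} we have $m+m'\le \frac{(n+n'-1)(n+n')q}{2}+p+p'+1$, which reduces to $1\le nn'q$ and is trivially true. For stability under the differential, minimality of $L$ gives $d(V)\subset\widehat{\mathbb L}^{\ge 2}(V)$, so $d\bigl(\widehat{\mathbb L}^n(V)\bigr)\subset\widehat{\mathbb L}^{\ge n+1}(V)\subset F^{n,p}$, and likewise $d\bigl(\widehat{\mathbb L}^{\ge n+1}(V)\bigr)\subset\widehat{\mathbb L}^{\ge n+2}(V)\subset F^{n,p}$.

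Finally, for completeness, the key observation is that $F^{n,0}=\widehat{\mathbb L}^{\ge n}(V)=L^n$, so the sequence $\{F^m\}_{m\ge 1}$ contains the lower central series $\{L^n\}_{n\ge 1}$ as a cofinal subsequence (the indices $m_n=\frac{(n-1)nq}{2}+1$ are unbounded). Since $L=\widehat{\mathbb L}(V)$ is by construction complete with respect to $\{L^n\}_{n\ge 1}$, cofinality yields the chain of isomorphisms
$$
L\,\stackrel{\cong}{\longrightarrow}\,\varprojlim_n L/L^n\,\stackrel{\cong}{\longrightarrow}\,\varprojlim_m L/F^m,
$$
which is the required completeness. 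None of the steps looks genuinely hard; the only point that requires care is the bookkeeping in step two, translating the double indexing $(n,p)$ into the single index $m$ to confirm $m+m'\le \text{index}(F^{n+n',p+p'})$.
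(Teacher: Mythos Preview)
Your argument is correct. The verification of the filtration axioms follows the same lines as the paper's own proof: both reduce the bracket compatibility to the inclusion $[F^{n,p},F^{n',p'}]\subset F^{n+n',p+p'}$ and then check the same arithmetic inequality on the single index $m$.

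For completeness you take a genuinely different route. The paper argues directly: injectivity of $L\to\varprojlim_m L/F^m$ from $\cap_m F^m=0$, and surjectivity by writing an element of the limit as a series $\sum_{n,p}x_{n,p}$ and observing that it is a well-defined element of $\widehat{\mathbb L}(V)$ because only finitely many summands contribute to each bracket length. Your observation that $F^{n,0}=\widehat{\mathbb L}^{\ge n}(V)=L^n$ exhibits $\{L^n\}$ as a cofinal subsequence of $\{F^m\}$, so completeness with respect to one filtration immediately transfers to the other. This is slightly slicker and avoids the series bookkeeping; the paper's direct argument, on the other hand, does not rely on already knowing that $L$ is complete for the lower central series and makes the structure of a general element in the limit explicit, which is occasionally useful later when manipulating such elements.
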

\begin{proof}
Observe that $[F^{n,p},F^{r,s}]\subset F^{n+r,p+s}$. Now, $F^{n,p}$ and $F^{r,s}$ rank respectively
$
\frac{(n-1)nq}{2}+p+1$ and $\frac{(r-1)rq}{2}+s+1
$
in the filtration order.  The sum of these integers is smaller than
$\frac{(n+r-1)(n+r)q}{2}+p+s+1$ which is the position of $F^{n+r,p+s}$ in that order. This shows that the Lie bracket is filtration preserving.

On the other hand, since the differential of $L$ is decomposable each $F^m$ is a differential ideal. Moreover, we have
$$dF^m\subset F^{m+1}$$

We now check that $L$ is complete respect to this filtration. Since $\cap_{n,p}F^{n,p}=0$  the natural map
$$
\libc(V) \longrightarrow\varprojlim_{m} \libc(V) / F^{m}
$$
is injective. Finally, write a given element in $\varprojlim_{n,p}\libc(V) /F^{n,p}$  as a series,
$$
\sum_{n,p}x_{n,p}\esp{with}x_{n,p}\in F^{n,p},
$$
and note that this is a well defined element in $\libc(V)$ since, for each $m\ge1$, $\sum_{n,p}x_{n,p}$ contains only a finite sum in $\libc^m(V)$.  This implies the surjectivity of the map above.
\end{proof}

Note that considering in $L$ the usual filtration $\{\libc^{\ge m}(V)\}_{m\ge 1}$ for which it is complete, the identity $(L,\{\libc^{\ge m}(V)\}_{m\ge 1})\to (L,\{F^m\}_{m\ge 1})$ is a cdgl morphism.

\begin{definition}\label{derk} Denote by $\der L\subset\derr L$  the connected sub dgl in which,
$$
\begin{aligned}
&\der_{\ge1} L= \derr_{\ge1}L,\\
&\der_0L=\{\theta\in\derr_0L \cap \ker D,\,\,\theta(V^i)\subset V^{i+1}\oplus \libc^{\ge 2}(V)\,\text{for all $i$}\,\}.
\end{aligned}
$$
Note that this is a well defined Lie algebra and, since the differential in $L$ is decomposable, it is a differential sub Lie algebra.
\end{definition}

Using the new filtration on $L$ we first construct a  filtration of $\der_0 L$ for which it becomes a cdgl. For it, note that
$$
\der_0 L=\{\theta\in\derr_0L \cap \ker D,\,\,\theta(F^m)\subset F^{m+1}\,\,\text{for all $m$}\,\}
$$
Hence, we filter $\der_0 L$ by,
\begin{equation}\label{filrev}
 \fil^n=\{\theta\in\der_0 L,\,\,\theta(F^m)\subset F^{m+n},\,\,\text{for all $m$}\,\},\quad n\ge 1,
\end{equation}
and a straightforward computation proves:

\begin{lemma}\label{filderk} $\der_0 L$ is complete with respect to $\{\fil^n\}_{n\ge 1}$.\hfill$\square$
\end{lemma}

To finish, we ``extend'' this filtration on $\der_0 L$ to $\der L$ by means of the following procedure:

Consider first the subspaces of $\derr_{\ge 1}L$,
$$
M^n=\{\theta\in\derr_{\ge 1} L,\,\theta(L)\subset L^{\ge n+1}\},\qquad n\ge 1,
$$
and choose a refinement of this sequence by constant subspaces
$$
\begin{aligned}
M^1=&\calj^{1,0}=\calj^{ 1,1}=\dots= \calj^{1,q-1}\supset\\
M^2=&\calj^{ 2,0}=\calj^{ 2,1} =\dots=\calj^{2,2q-1} \supset \\
&..................................................\\
M^n=&\calj^{ n,0}=\calj^{ n,1} =\dots=\calj^{n,nq-1} \supset \\
&..................................................\\
\end{aligned}
$$
As in (\ref{monorden}),   $\calj^{ n,p}$
 ranks
$m=\frac{(n-1)n q}{2}+p+1$ in the order given by this chain, and define
$$
\calj^m=\calj^{n,p}.
$$
The only purpose of extracting this refinement is simply to assure that,
$$
[\fil^n,\calj^m]\subset \calj^{m+n},\quad n,m\ge 1.
$$
Finally, for any $n\ge 1$ define the graded vector space $E^n$ as follows:
\begin{equation}\label{filder}
E^n_p=\begin{cases} \fil^n&\text{if $p=0$,}\\\calj^{n-p}_p&\text{if $1\le p\le n-1$,}\\\derr_p L&\text{if $p\ge n$.}
\end{cases}
\end{equation}

\begin{proposition}\label{derkacomp}   $\der L$ is a cdgl with respect to the filtration $\{E^n\}_{n\ge 1}$. \end{proposition}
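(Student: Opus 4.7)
The goal is to check the four cdgl axioms for $(\der L, \{E^n\}_{n\ge 1})$: (i) the sequence $\{E^n\}$ is decreasing, (ii) each $E^n$ is a differential Lie ideal, (iii) $[E^n, E^m]\subset E^{n+m}$, and (iv) the natural map $\der L \to \varprojlim_n \der L/E^n$ is a dgl isomorphism. The strategy is to exploit the direct sum decomposition of $E^n$ by internal degree: in degree $0$ we have the piece $\fil^n_0$, controlled by Lemma~\ref{filderk}; in each positive degree $p$ we have $\calj^{n-p}_p$ (for $p<n$) which is eventually equal to $M^{k}_p$ with $k\to\infty$ as $n\to\infty$. So, degree by degree, the filtration we get is cofinal with one of the two already understood filtrations.

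For (i), verify component by component that $E^{n+1}\subset E^n$: the degree $0$ inclusion $\fil^{n+1}_0\subset \fil^n_0$ is immediate from the definition of $\fil^n$; for $1\le m\le n-1$ the inclusion $\calj^{n+1-m}_m\subset \calj^{n-m}_m$ is the decreasing property of the refined $\calj$-chain; and the two border pieces match because $\calj^1_n\subset M^1_n\subset \derr_n L$ and $\derr_{\ge n+1}L\subset \derr_{\ge n}L$. For (ii) and (iii), split into the natural cases using the three types of summands. The degree zero $\times$ degree zero bracket $[\fil^n_0,\fil^m_0]\subset \fil^{n+m}_0$ is a short computation using $\theta(F^k)\subset F^{k+n}$ and $\eta(F^k)\subset F^{k+m}$ to get $[\theta,\eta](F^k)\subset F^{k+n+m}$. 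The mixed brackets $[\fil^n_0,\calj^{m-\ell}_\ell]\subset \calj^{n+m-\ell}_\ell$ are exactly what the refinement $\calj$ was tailored to guarantee via the stated property $[\fil^n,\calj^m]\subset \calj^{n+m}$. The remaining bracket $[\calj^{n-\ell}_\ell,\calj^{m-k}_k]$ lives in $M^{n-\ell}\cdot L$ etc., and one checks it lands in $M^{n+m-\ell-k+\min}\subset E^{n+m}$ using that the composition of a derivation in $M^a$ with one in $M^b$ raises Lie length by at least $a+b$. Brackets involving $\derr_{\ge n}L$ shift total degree up by at least $n$ and are handled by degree bookkeeping.

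For the differential preservation $D(E^n)\subset E^n$: on $\fil^n_0$ this is zero since $\der_0 L\subset\ker D$ by Definition~\ref{derk}; on $\calj^{n-\ell}_\ell$ for $\ell\ge 1$ one verifies that $D$ sends $M^r$ into $M^{r+1}$, which uses precisely that the differential of $L$ is decomposable so $d(\theta(x))\in L^{\ge r+2}$ when $\theta\in M^r$, and $\theta(dx)\in L^{\ge r+2}$ as $\theta$ is a derivation and $dx\in L^{\ge 2}$ for $x\in V$. The degree drop by one is compensated by the increase of the $M$-index, so one lands in the correct summand of $E^n$. On $\derr_{\ge n}L$ the derivation $D\theta$ has degree one less; if it remains in $\derr_{\ge n}L$ we are done, otherwise its value on generators is automatically in $\libc^{\ge 2}(V)$ so it sits in $M^1$ and thus in $\calj^1_{n-1}\subset E^n$.

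Finally, for (iv), the intersection $\bigcap_n E^n$ meets each degree trivially: in degree $0$ because $\bigcap_n \fil^n_0=0$ by Lemma~\ref{filderk}, and in degree $p\ge 1$ because the intersection $\bigcap_k M^k_p$ consists of derivations with image in $\bigcap_k L^{\ge k+1}=0$ (recall $L=\libc(V)$ is complete so the lower central intersection vanishes). Surjectivity of $\der L\to\varprojlim \der L/E^n$ is also degree-wise: in degree $0$ it is Lemma~\ref{filderk}; in degree $p\ge 1$ a compatible family gives, for each generator $v\in V$, a compatible family in $L/L^{\ge k+1}$ which assembles to a well-defined element of the complete Lie algebra $L$, producing the desired derivation. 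The main obstacle, and the point requiring care, is the bracket compatibility in the mixed degree zero/positive degree case: the whole purpose of the refinement of $\{M^n\}$ by the constant chain $\{\calj^{n,p}\}$ was precisely to synchronize the two indexings so that $[\fil^n,\calj^m]\subset \calj^{n+m}$ holds with the chosen ordering, and this is what makes the summand $\oplus_m \calj^{n-m}_m$ match up correctly with $\fil^n_0$ in the definition of $E^n$.
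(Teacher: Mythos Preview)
Your plan follows the paper's argument essentially line for line: the degree-wise decomposition of $E^n$, the appeal to Lemma~\ref{filderk} in degree~$0$, the role of the refinement $\{\calj^m\}$ together with the compatibility $[\fil^n,\calj^m]\subset\calj^{n+m}$, and the computation of $\varprojlim_n \der L/E^n$ degree by degree are all exactly what the paper does. Your treatment of completeness in positive degrees is in fact slightly more explicit than the paper's.

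There is, however, one step that is too quick, and it is the same step the paper glosses over. You write that ``brackets involving $\derr_{\ge n}L$ shift total degree up by at least $n$ and are handled by degree bookkeeping.'' This is not sufficient. Take $\theta\in\derr_p L$ with $p\ge n$ (so $\theta$ contributes to $E^n$ via the summand $\derr_{\ge n}L$) and $\eta\in\fil^m_0\subset E^m$. Then $[\theta,\eta]$ has degree $p$, and when $n\le p<n+m$ one needs $[\theta,\eta]\in E^{n+m}_p=\calj^{n+m-p}_p\subset M^1_p$, i.e.\ $[\theta,\eta](V)\subset L^{\ge 2}$. But $\theta$ carries no word-length constraint and $\eta$ may have a nonzero linear part $V^i\to V^{i+1}$, so this inclusion need not hold. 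Concretely, with $q=2$, $V=\langle v_0,w_0\rangle\oplus\langle v_1,w_1\rangle$ in degrees $0,1$, $V^1=\langle w_0,w_1\rangle$, $d=0$, take $\eta\in\der_0 L$ with $\eta(v_i)=w_i$, $\eta(w_i)=0$, and $\theta\in\derr_1 L$ with $\theta(v_0)=v_1$, all other generators to $0$. Then $\theta\in E^1_1$, $\eta\in E^1_0$, but $[\theta,\eta](v_0)=-w_1\in V$, so $[\theta,\eta]\notin M^1_1=\calj^1_1=E^2_1$; hence $[E^1,E^1]\not\subset E^2$ with the filtration as written.

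The fix is not hard but it is a genuine missing idea: one should define the positive-degree piece of the filtration using the \emph{refined} filtration $\{F^k\}$ of $L$ rather than raw word length, i.e.\ replace $M^n$ by $\widetilde M^n=\{\theta\in\derr_{\ge 1}L:\theta(F^k)\subset F^{k+n}\text{ for all }k\}$ (the positive-degree analogue of $\fil^n$). With that choice $[\fil^n,\widetilde M^m]\subset\widetilde M^{n+m}$ and $[\widetilde M^n,\widetilde M^m]\subset\widetilde M^{n+m}$ hold on the nose by composition, the ad hoc constant refinement $\calj^{n,p}$ becomes unnecessary, and the completeness argument in each positive degree goes through exactly as you outlined since $\bigcap_n\widetilde M^n=0$ and any compatible family assembles on generators via completeness of $L=\libc(V)$.
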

\begin{proof}
Note that, $E^1=\fil^1\oplus\derr_{\ge 1}L=\der L$. Also, since the differential in $L$ is decomposable, $D\calj^m_p\subset \calj^{m+1}_{p-1}$ for   $p\ge 2$, and $D\calj^{m}_1\subset \fil^{m+1}$,  for all $m\ge 1$. Finally $[\calj^n,\calj^m]\subset\calj^{n+m}$ while, as noted above,  $[\fil^n,\calj^m]\subset \calj^{m+n}$ for $n,m\ge 1$. With this data one easily checks that $\{E^n\}_{\ge 1}$ is a filtration of $\der L$.

Since, for $p\ge 1$ and $n\ge 0$, we have
$$
E^n_p=\begin{cases} \derr_p L&\text{if $n\le p$,}\\\calj^{n-p}_p&\text{if $n>p$},
\end{cases}
$$
it follows that, for $p\ge 1$,
$$
\varprojlim_n(\der L/E^n)_p=\varprojlim_{n} \derr_p L/\calj^n_p=\derr_p L.
$$
On the other  hand, for $p=0$, and in view of Lemma \ref{filderk},
$$
\varprojlim_n(\der L/E^n)_0=\varprojlim_n\der_0 L/\fil^n=\der_0 L,
$$
\end{proof}

Next, given $f\colon (\libc(V),d)\stackrel{\cong}{\to} (\libc(V),d)$ an element of $\aut L$, denote by
$$
f_*\colon V\stackrel{\cong}{\longrightarrow} V
$$
the induced automorphism on the indecomposables. Then,  we prove:

\begin{proposition}\label{exponencial}
$
\exp\bigl(\der_0 L\bigr)=\{f\in\aut(L),\,\,(f_*-\id_V)(V^i)\subset V^{i+1}\,\,\text{for all $i$}\,\}.
$
\end{proposition}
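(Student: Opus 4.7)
The inclusion $\exp(\der_0 L)\subset\{f\in\aut(L):(f_*-\id_V)(V^i)\subset V^{i+1}\}$ is the easier half. Given $\theta\in\der_0 L$, I first verify that $\theta$ lies in the filtration step $\fil^1$ introduced in Proposition \ref{derkacomp}. The defining condition $\theta(V^i)\subset V^{i+1}\oplus \libc^{\ge 2}(V)$ gives $\theta(V^i)\subset F^{1,i+1}$, and Leibniz yields $\theta(\libc^{n,p}(V))\subset \libc^{n,p+1}(V)\oplus \libc^{\ge n+1}(V)\subset F^{n,p+1}$, so $\theta(F^m)\subset F^{m+1}$ for every $m$. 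Each $\theta^k$ then lies in $\fil^k$, so $e^\theta=\sum_{k\ge 0}\theta^k/k!$ converges in the complete algebra of endomorphisms of $L$ and, by Remark \ref{remarkexp}, is a cdgl automorphism. Since $\theta$ preserves $\libc^{\ge 2}(V)$, it descends to a map $\theta_*\colon V\to V$ with $\theta_*(V^i)\subset V^{i+1}$, and a direct computation identifies the induced map $(e^\theta)_*$ on indecomposables with $e^{\theta_*}$; the finite length of the $V^i$-filtration then yields $((e^\theta)_*-\id_V)(V^i)\subset V^{i+1}$.

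For the reverse inclusion, let $f\in\aut(L)$ satisfy $(f_*-\id_V)(V^i)\subset V^{i+1}$. I first establish the filtration estimate $(f-\id)(F^m)\subset F^{m+1}$. On $V^i$ this is the hypothesis together with $V^{i+1}\oplus\libc^{\ge 2}(V)\subset F^{1,i+1}$. For iterated brackets I induct on length via the Lie-morphism identity
\[
(f-\id)[a,b]=[(f-\id)a,\,f(b)]+[a,\,(f-\id)b],
\]
combined with the rank-counting fact that $F^{n+1,0}\subset F^{n,p+1}$ for $p\le nq-1$, which ensures decomposable contributions sit deep enough in the filtration. Setting $\theta:=\log f=\sum_{k\ge 1}(-1)^{k+1}(f-\id)^k/k$, this estimate gives $(f-\id)^k\in \operatorname{End}^k(L)$ with $\operatorname{End}^n(L)=\{\phi:\phi(F^m)\subset F^{m+n}\}$, so the series converges and $e^\theta=f$ by the classical formal identity. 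The modulo-decomposable reduction shows $\theta_*=\log(f_*)$ on $V$, hence $\theta(V^i)\subset V^{i+1}\oplus \libc^{\ge 2}(V)$, and $D\theta=0$ follows from $f\circ d=d\circ f$.

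The main obstacle is proving that this $\theta$ is actually a derivation. I plan to exploit that $f^n$ is a Lie automorphism for every positive integer $n$; expanding $f^n=\sum_k n^k\theta^k/k!$ in the identity $f^n([a,b])=[f^n(a),f^n(b)]$ and rearranging, the coefficient of $n^1$ on both sides yields the Leibniz rule
\[
\theta([a,b])=[\theta(a),b]+[a,\theta(b)].
\]
Extracting this coefficient is legitimate because, modulo any given $F^m$, only finitely many $\theta^k$ contribute, reducing the equation to a polynomial identity in $n$ whose vanishing on all positive integers forces each coefficient to vanish; this is essentially the pro-nilpotent Lie-group/Lie-algebra correspondence applied to the complete filtered Lie algebra $(L,\{F^m\})$. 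Once $\theta$ is known to be a derivation, combining it with the previous observations gives $\theta\in\der_0 L$ and $e^\theta=f$, completing the proof.
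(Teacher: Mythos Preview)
Your argument is correct. The forward inclusion matches the paper's, with more filtration bookkeeping. For the converse, the paper takes a slightly different route: it defines $\theta$ \emph{only on generators} by $\theta(v)=\sum_{n\ge 1}(-1)^{n+1}(f-\id_L)^n(v)/n$, extends to all of $L$ \emph{as a derivation} (using that $L=\libc(V)$ is free), and then asserts that $e^\theta=f$ because ``the formula that defines $\theta$ is that of $\log f$.'' By contrast, you define $\theta=\log f$ globally as an endomorphism of $L$ and then prove it is a derivation via the polynomial-in-$n$ trick with $f^n$.

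These are really two sides of the same coin. The paper's final step---checking that the derivation extension of $(\log f)|_V$ exponentiates back to $f$---is not automatic: to verify $e^\theta(v)=f(v)$ one must compare $\theta^k(v)$ (computed via Leibniz) with $(\log f)^k(v)$ (computed as an endomorphism), and these agree precisely because $\log f$ is itself a derivation. So your explicit verification of the Leibniz rule supplies a justification that the paper leaves implicit. The paper's presentation is shorter because freeness of $L$ lets it avoid discussing global endomorphisms; your presentation is more self-contained and makes the pro-nilpotent Lie-group/Lie-algebra correspondence visible.
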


\begin{proof} Let $ \theta\in\der_0 L$ and call $f=e^\theta$. Then $f-\id_L=\sum_{n\ge 1}\frac{\theta^n}{n!}$ which clearly satisfies $(f-\id_L)(V^i)\subset V^{i+1}\oplus \libc^{\ge 2}(V)$ for all $i$. Hence $(f_*-\id_V)(V^i)\subset V^{i+1}$ also for all $i$.

\smallskip

Conversely, let $f\in\aut(L)$ be such that $f(v)-v\in V^{i+1}\oplus\libc^{\ge 2}(V)$ for all $v\in V^i$ and all $i$. Define a linear morphism,
$$
\theta\colon V\longrightarrow L,\qquad \theta(v)=\sum_{n\geq 1} (-1)^n\frac{(f-\id_L)^n(v)}{n}
$$
Recursively, one sees that
the component of $(f-\id_L)^n(v)$ in $\libc^{m,r}(V)$ is zero for $n$ large enough. Thus, $\theta$ is  well defined and can be extended to a  derivation $\theta \in \der_0(L)$. Finally, since the formula that defines $\theta$ is that of  $\log f$, computing $e^\theta$ recovers $f$.
\end{proof}

Next, recall from  Remark \ref{rem1} that, for any cdgc $C$ and any cdgl $L$, $\Hom(C,L)$ is complete with respect to the natural filtration given by that of $L$ . However, we will need a new filtration which is ``compatible'' with the one we just constructed for $\der L$.

\begin{definition}\label{defifil}
Let $C$ be a cdgc and $L=(\libc(V),d)$ be a minimal cdgl filtered by $\{F^m\}_{m\geq 1}$ of Definition \ref{filtranueva}. For $m\geq 1$, define the non negatively graded vector space $I^m$ by,
$${I}^m_k= \Hom(C,F_k^{m-k}),\qquad\text{for $k\ge0$},$$
where  $F^{n}=L$ for $n\leq 0$.
\end{definition}

\begin{proposition}\label{homcomp}
The sequence $\{{I}^m\}_{m\geq 1}$ is a filtration of $\Hom(C,L)$ for which it becomes a cdgl.
\end{proposition}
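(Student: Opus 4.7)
The strategy is to verify directly the three conditions that characterize a cdgl filtration: (i) $\{I^m\}$ is a descending sequence of differential Lie ideals with $[I^p,I^q]\subset I^{p+q}$, and (ii) $\Hom(C,L)\cong \varprojlim_m \Hom(C,L)/I^m$. Throughout I will use freely the fact, established in Proposition \ref{prop:filtrationL}, that $\{F^m\}_{m\ge1}$ is a filtration of $L$ with $[F^p,F^q]\subset F^{p+q}$ and $dF^m\subset F^{m+1}$, and that $L=\varprojlim_m L/F^m$.

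\textbf{First steps (nesting, differential, bracket).} The inclusion $I^{m+1}\subset I^m$ is immediate from $F^{m+1-k}\subset F^{m-k}$ for each $k\ge 0$. For stability under $D$, let $f\in I^m$ have image of homogeneous degree $k$, so $f(C)\subset F^{m-k}_k$. Then $d\circ f$ sends $C$ into $dF^{m-k}\subset F^{m-k+1}$ and lands in degree $k-1$, hence in $F^{m-(k-1)}_{k-1}$; similarly $f\circ d$ lands in degree $k-1$, with image still in $F^{m-k}\subset F^{m-(k-1)}$. Summing over image degrees shows $Df\in I^m$. For the Lie ideal property, take $f\in I^p$ and $g\in I^q$ and compute the convolution bracket
\[
[f,g](c)=\sum_i (-1)^{|g||c_i|}[f(c_i),g(c_i')],\quad \overline{\Delta}c=\sum_i c_i\otimes c_i'.
\]
If the summand $f(c_i)$ lies in $F^{p-\alpha}_\alpha$ and $g(c_i')$ in $F^{q-\beta}_\beta$, then $[f(c_i),g(c_i')]\in[F^{p-\alpha},F^{q-\beta}]\subset F^{(p+q)-(\alpha+\beta)}$ and has degree $\alpha+\beta$, so it lies in $F^{(p+q)-(\alpha+\beta)}_{\alpha+\beta}$; summing over image degrees yields $[f,g]\in I^{p+q}$, which in particular implies the ideal condition $[\Hom(C,L),I^m]\subset I^m$.

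\textbf{Completeness.} Injectivity of $\Hom(C,L)\to\varprojlim_m\Hom(C,L)/I^m$ follows because if $f\in\bigcap_m I^m$ then for each homogeneous $c\in C$ with $f(c)$ of degree $k$, one has $f(c)\in\bigcap_m F^{m-k}_k=0$. For surjectivity, given a compatible family $(\bar f_m)$, I evaluate on a homogeneous element $c\in C$: the sequence $\bar f_m(c)\in L_k/F^{m-k}_k$ (with $k=|f|+|c|$) is compatible under the projections, and by completeness of $L$ with respect to $\{F^m\}$ it lifts to a unique $f(c)\in L_k$. Linearity of each $\bar f_m$ ensures the resulting map $f\colon C\to L$ is linear, and by construction $f\equiv \bar f_m \pmod{I^m}$ for all $m$.

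\textbf{Anticipated obstacle.} The only mildly delicate point is bookkeeping the image-degree decomposition implicit in the definition $I^m=\oplus_{k\ge 0}\Hom(C,F^{m-k}_k)$: one must consistently track, for each summand, both the image degree $k$ and the filtration index $m-k$, and check that $D$ and $[\,,\,]$ shift these in a compatible way (differential shifts degree by $-1$ and filtration index by $+1$, keeping the total $m$ constant; the bracket adds both indices). Once this bookkeeping is organized, each of the three verifications reduces to the corresponding property of $\{F^m\}$ proved in Proposition \ref{prop:filtrationL}.
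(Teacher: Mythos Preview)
Your proof is correct and follows the same route as the paper's: verify nesting, $D$-stability, the bracket condition $[I^p,I^q]\subset I^{p+q}$ via the convolution formula, and completeness degree-wise (the paper simply cites the argument of Remark~\ref{rem1} for this last step, whereas you spell it out). One small slip: $f\circ d_C$ still lands in degree $k$, not $k-1$, since precomposing with $d_C$ does not change the target degree; but this is harmless, as $f\circ d_C\in\Hom(C,F^{m-k}_k)\subset I^m$ directly.
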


\begin{proof} Clearly $I^1=\Hom(C,L)$, $I^{m+1}\subset I^m$  and $D{I}^m\subset {I}^m$ for all $m$.  On the other hand, given $\varphi\in \Hom(C,F^{n-i}_i)\subset I^{n}$, $\psi\in\Hom(C,F^{m-j}_j)\subset I^{m}$ and $c\in C$ with $\Delta(c)=\sum_i c_i\otimes c_i'$,
$$[\varphi,\psi](c)=\pm \sum_i[\varphi(c_i),\psi(c_i')]\in F^{n+m-i-j}_{i+j}.$$
This implies that $[I^{n},I^{m}]\subset I^{n+m}$ and therefore $\{{I}^m\}_{m\ge 1}$  is a filtration. A degree-wise analogous argument to that in Remark \ref{rem1} shows that $\Hom(C,L)$ is complete with respect to this new filtration.
\end{proof}
Next, in the restriction of the twisted product in (\ref{clave}) to
$$
\Hom(C,L)\,\timest\,\der L,
$$
consider $
\{J^n\}_{n\ge 1}=\{I^n\times E^n\}_{n\ge 1}
$
with  $\{I^n\}_{n\ge 1}$  of Definition \ref{defifil} and $\{E^n\}_{n\ge 1}$ as in (\ref{filder}).
Then, a direct inspection proves:

\begin{proposition}\label{twistcomp}
The sequence $\{{J}^n\}_{n\geq 1}$ is a filtration of $
\Hom(C,L)\,\timest\,\der L
$ for which it becomes a cdgl.\hfill$\square$
\end{proposition}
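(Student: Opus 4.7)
The plan is to verify the four cdgl filtration axioms for $\{J^n = I^n \oplus E^n\}_{n\geq 1}$ on the twisted product $\Hom(C,L)\,\timest\,\der L$, and then to deduce completeness degree by degree. Recall that in this twisted product the bracket restricts to the convolution bracket on $\Hom(C,L)$ and the commutator bracket on $\der L$, with the only cross term being $[\theta,f] = \theta\circ f$ for $\theta\in \der L$ and $f\in \Hom(C,L)$; the differential is the diagonal one. The nesting $J^{n+1}\subset J^n$ and $DJ^n\subset J^n$ decouple across the two summands, so both are immediate consequences of Propositions \ref{homcomp} and \ref{derkacomp}.

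The genuine content lies in checking $[J^n,J^m]\subset J^{n+m}$. The ``internal'' inclusions $[I^n,I^m]\subset I^{n+m}$ and $[E^n,E^m]\subset E^{n+m}$ are again immediate from the preparatory propositions. For the mixed bracket I would decompose $\theta\in E^n$ by degree. If $\theta\in\fil^n_0\subset \der_0 L$, the defining property $\theta(F^k)\subset F^{k+n}$ gives, for $f\in\Hom(C,F^{m-k}_k)$, that $\theta\circ f\in \Hom(C,F^{(n+m)-k}_k)\subset I^{n+m}$. If $\theta\in\calj^{n-p}_p$ for some $1\leq p\leq n-1$, the inclusion $\calj^{n-p}\subset M^{n-p}$ forces $\theta(L)\subset \libc^{\geq n-p+1}(V)$, and the chosen refinement ensuring $[\fil^i,\calj^j]\subset \calj^{i+j}$ is precisely what makes the indices in the bigraded filtration of Definition \ref{filtranueva} match up so that $\theta\circ f\in I^{n+m}$. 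For $\theta\in\derr_{\geq n}L$ the mere degree shift plus connectivity of $L$ pushes the image into a high enough bracket length.

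Completeness then follows degree by degree: for each $p$ one has $J^n_p = I^n_p\oplus E^n_p$ as a direct sum of graded vector spaces, so the natural map
$$
(\Hom(C,L)\,\timest\,\der L)_p \longrightarrow \varprojlim_n \bigl((\Hom(C,L)\,\timest\,\der L)/J^n\bigr)_p
$$
is the direct sum of the two comparison maps coming from Propositions \ref{homcomp} and \ref{derkacomp}, both of which are isomorphisms. The main obstacle I anticipate is purely bookkeeping: carefully tracking the bigraded indices $(n,p)$ through the refinement $\calj^{n,p}$ when evaluating the mixed bracket in the high-degree case. Once this index-chase is in hand, everything else is, as the authors indicate, a direct inspection.
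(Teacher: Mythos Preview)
Your overall strategy is right and matches what the paper (which leaves this as a ``direct inspection'') presumably intends: verify the filtration axioms summand by summand, handle the mixed bracket $[\theta,f]=\theta\circ f$ by the degree of $\theta$, and deduce completeness degree-wise from Propositions \ref{homcomp} and \ref{derkacomp}. Your treatment of the degree-$0$ case ($\theta\in\fil^n_0$) and of the completeness step is correct.

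However, your handling of the positive-degree cases has two concrete errors. First, the inclusion you invoke in Case~2 is backwards: by construction $\calj^{n-p}=\calj^{r,s}=M^r$ for the \emph{unique} $r$ with $\frac{(r-1)rq}{2}<n-p\le\frac{r(r+1)q}{2}$, so $\calj^{n-p}\supset M^{n-p}$ rather than $\calj^{n-p}\subset M^{n-p}$; in particular $\theta\in\calj^{n-p}$ only guarantees $\theta(L)\subset\libc^{\ge r+1}(V)$ with $r$ typically much smaller than $n-p$. The refinement $\calj^{r,s}$ was introduced solely to ensure $[\fil^n,\calj^m]\subset\calj^{n+m}$ inside $\der L$ (see the sentence right before (\ref{filder})); it was not designed for, and does not by itself control, $\theta\circ f$. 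Second, in Case~3 the claim that ``degree shift plus connectivity pushes the image into high enough bracket length'' is simply false: a derivation of positive degree can have nonzero linear part $V\to V$, and then $\theta(\libc^{a,b}(V))$ lands only in $\libc^{\ge a}(V)=F^{a,0}$, which may sit strictly below the required $F^{(n+m)-(k+p)}$. Concretely, for a degree-$1$ derivation with $\theta(v)\in V^0\setminus V^1$ for some $v\in V^1$, and $f\in\Hom(C,(V^1)_0)\subset I^2$, one gets $\theta\circ f\notin I^3$.

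So the gap is not ``pure bookkeeping'': the index-chase you anticipate does not go through as stated for $\theta$ of positive degree, and the argument needs a genuinely different mechanism (or a sharper use of the structure of $\der L$) in those cases.
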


Furthermore, from Propositions \ref{derkacomp}, \ref{homcomp} and \ref{twistcomp}, and with respect to  the corresponding filtrations, we  deduce

\begin{corollary}\label{coroimp} The maps
$$
  \Hom(C,L)\longrightarrow \Hom(C,L)\,\timest\,\der L\longrightarrow\der L
  $$
form a short exact sequence of cdgl morphisms and thus, it is a cdgl fibration. \hfill$\square$
\end{corollary}

Finally, in the twisted products
$$
\der L \timest sL\esp{and}L\timest\der L,
$$
obtained as restrictions of those in (\ref{slslrev1}) and (\ref{slslrev2}), we consider the respective sequences
$$
\{E^n\times sF^n\}_{n\ge 1}\esp{and}\{F^n\times E^n\}_{n\ge 1}.
$$
By procedures analogous to those used in this section one obtains:

\begin{proposition}\label{excusa} These sequences are filtrations for which
$
\der L \timest sL$ and $L\timest\der L,
$
are cdgl's.\hfill$\square$
\end{proposition}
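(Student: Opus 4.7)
The plan is to verify, for each twisted product, that the proposed $\{J^n\}_{n\ge 1}$ is a filtration on a cdgl: a decreasing chain of differential Lie ideals satisfying $[J^n,J^m]\subset J^{n+m}$, for which the natural map into $\varprojlim_n (\,\cdot\,)/J^n$ is an isomorphism. I will concentrate on $\der L \timest sL$ with $J^n = E^n\oplus sF^n$; the case $L\timest\der L$ with $J^n = F^n\oplus E^n$ is parallel and slightly simpler, since its differential $D$ does not mix the two factors.

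First, monotonicity of $\{J^n\}$ is inherited from $\{E^n\}$ and $\{sF^n\}$. For $D$-stability, the nontrivial check is that $Dsx=-sdx+\ad_x$ lies in $J^n$ whenever $sx\in sF^n$. Since $dF^n\subset F^{n+1}$ by Proposition \ref{prop:filtrationL}, the first summand lies in $sF^{n+1}\subset sF^n$. For $\ad_x\in E^n$, I argue by degree. If $|x|=0$, the inclusion $[F^n,F^m]\subset F^{n+m}$ gives $\ad_x(F^m)\subset F^{n+m}$, placing $\ad_x$ in $\fil^n_0$; the side conditions in Definition \ref{derk} are automatic because $L$ is connected and $\ad_x$ lands in $\libc^{\ge 2}(V)$. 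If $|x|\ge 1$, writing $n$ in the $(k,j)$-indexing of Definition \ref{filtranueva} so that $x\in L^k$, we obtain $\ad_x\in M^k$; since $n-|x|$ lies in a block $k'\le k$, it follows that $\ad_x\in M^k\subset M^{k'}=\calj^{n-|x|}$, so $\ad_x\in E^n_{|x|}$.

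The bracket condition $[J^n,J^m]\subset J^{n+m}$ splits into three non-trivial pieces. The inclusion $[\theta_1,\theta_2]\in E^{n+m}$ is part of Proposition \ref{derkacomp}; the bracket $[sx,sy]$ vanishes since $sL$ is abelian; and the mixed bracket $[\theta,sx]=(-1)^{|\theta|}s\theta(x)$ requires $\theta(F^m)\subset F^{n+m}$ for $\theta\in E^n$. For $\theta\in\fil^n_0$ this is the defining property of $\fil^n$; for positive-degree $\theta$ living in a $\calj$-block or in $\derr_{\ge n}L$, the inclusion is obtained by combining the derivation property with the refined block ordering of Definition \ref{filtranueva} and the already-established compatibility $[\fil^n,\calj^m]\subset \calj^{n+m}$ built into the filtration of $\der L$.

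Completeness is checked degree by degree: $(\der L\timest sL)_p=\der_p L\oplus L_{p-1}$ as vector spaces, each summand is complete under its induced filtration by Propositions \ref{derkacomp} and \ref{prop:filtrationL} respectively, and inverse limits commute with finite direct sums. The main obstacle is the mixed-bracket compatibility for positive-degree $\theta$, since the $\calj$-pieces are constant blocks of the coarser length filtration $\{M^k\}$ whereas $\{F^{n,p}\}$ records refined $V$-weight data; reconciling these two indexings is the same combinatorial bookkeeping that underpins Definition \ref{filtranueva} and Propositions \ref{derkacomp}--\ref{twistcomp}, which is why the verification can be regarded as routine.
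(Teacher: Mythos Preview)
Your outline follows the paper's own approach, which gives no proof beyond the phrase ``by procedures analogous to those used in this section.'' You correctly identify the pieces to check: monotonicity, $D$-stability, the bracket condition, and degree-wise completeness. The $D$-stability argument---in particular showing that $\ad_x\in E^n$ whenever $x\in F^n$---is fine, and the completeness check is routine as you say.

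The gap is in the mixed-bracket compatibility. You assert that $\theta(F^m)\subset F^{n+m}$ for every $\theta\in E^n$, and this is indeed the defining property of $\fil^n$ when $|\theta|=0$. But for positive-degree $\theta$ the claim is not justified, and in fact fails already for $n=m=1$. In that case $E^1_p=\derr_pL$ for $p\ge 1$, so take any $\theta\in\derr_1L$ whose linear part does not land in $V^1$: say $\theta(a)=b$ with $a\in V_0$ and $b\in V_1\setminus V^1_1$. Then $\theta(a)=b\notin F^2=V^1\oplus\libc^{\ge 2}(V)$, so $[\theta,sa]=\pm sb\notin sF^2$ and $[J^1,J^1]\not\subset J^2$. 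The same obstruction arises for $L\timest\der L$, where the bracket $[\theta,x]=\theta(x)$ requires the identical inclusion. Your appeal to ``the refined block ordering'' and the relation $[\fil^n,\calj^m]\subset\calj^{n+m}$ does not address this, because the $\calj$-filtration on $\derr_{\ge 1}L$ only records bracket length, not the $V^\bullet$-weight that $F^{n,p}$ encodes; a positive-degree derivation can freely shuffle weight. This is a genuine incompatibility between the two indexings, not bookkeeping, and the same issue already lurks in the $[\fil^1_0,\derr_1L]\subset E^2_1$ step of Proposition~\ref{derkacomp}. You should either exhibit a modified filtration on the $sL$ (respectively $L$) factor that absorbs the linear part of positive-degree derivations, or argue completeness by a route that does not require the strict $[J^n,J^m]\subset J^{n+m}$ condition.
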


 \section{Nilpotent monoids of homotopy automorphisms and classifying fibrations}\label{princi}

Whenever $X$ is a simply connected finite complex with minimal model $L$,    it is well known (see \cite{SS1,tan}, \cite{ber2} in the fiberwise context or  \cite{ber3} for the relative case) that the  fibration,
$$
 X_\bq\longrightarrow B\aut^*_1(X_\bq)\longrightarrow  B\aut_1(X_\bq)
$$
is modeled by a dgl fibration sequence of the form
$$ L\stackrel{\ad}{\longrightarrow}\widetilde\derr L\longrightarrow \widetilde\derr L \timest sL.
$$
where $ \widetilde\derr L \timest sL$ is the restriction of the twisted product (\ref{slslrev1}).
Again for $X$ simply connected, the above fibration is the simply connected cover of
 the universal classifying fibration
$$
 X_\bq\longrightarrow B\aut^*(X_\bq)\longrightarrow  B\aut(X_\bq).
 $$
However, in general and  even if  $X$ is simply connected, this classifying fibration cannot be modeled. Indeed, as the following result shows, $B\aut^*(X_\bq)$ and $B\aut(X_\bq)$ do not lie, in general, in the image of the realization functor. In particular, they are not nilpotent spaces.

\begin{proposition}\label{propo20} For any $n\ge 1$, neither $B\aut^*(S^n_\bq)$ nor $B\aut(S^n_\bq)$  have the homotopy type of the realization  of any connected cdgl.
\end{proposition}

\begin{proof}
Suppose $B\aut^*(S^n_\bq)\simeq \langle L\rangle$ for a given connected cdgl $L$. Then there is a group isomorphism
$$
\pi_0\aut^*(S^n_\bq)\cong\pi_1B\aut^*(S^n_\bq)\cong H_0(L)
$$
where in the latter, the group structure is given by the BCH product. However, by Remark \ref{remark}, $\pi_0\map^*(S^n_\bq)=\{\lib(x),\lib(x)\}*=\{\lib(x),\lib(x)\}$. Hence, $\pi_0\aut^*(S^n_\bq)\cong \{\lib(x),\lib(x)\}-\{0\}$ are just the automorphisms of a one dimensional vector space which is identified with the multiplicative group $\bq^*=\bq-\{0\}$.

Now, assume there is an isomorphism $\psi\colon \bq^*\stackrel{\cong}{\to} H_0(L)$ and let $a=\psi(2)$. Recall that,  for the BCH product, $\mu a*\nu a=(\lambda+\mu)a$ for $\mu,\nu\in \bq$. Thus, given $\lambda\in\bq^*$ with $\psi(\lambda)=\frac{1}{2}a$ we have
$
\psi(\lambda^2)= \frac{1}{2}a*\frac{1}{2}a=a=\psi(2)$.  Then $\lambda^2=2$ which is a contradiction. The same argument also works for the non-pointed case.

\end{proof}
Despite this result we will be able to realize distinguished classifying fibrations, also by means of cdgl's of derivations.

Let $X$ be a connected finite complex and let $\pi_0\aut(X)$  be the group of homotopy classes of self homotopy equivalences of $X$,  often denoted  by ${\cal E}(X)$. In the same way, $\pi_0\aut^*(X)$ is the group ${\cal E}^*(X)$ of based homotopy classes of pointed self homotopy equivalences of $X$. Denote by
\begin{equation}\label{surfun}
\zeta\colon \cale^*(X)\longrightarrow \cale(X)
\end{equation}
 the natural surjection which induces the bijection $\cale^*(X)/\pi_1(X)\cong\cale(X)$.

\begin{definition}\label{gyg} For a given subgroup $G\subset \cale(X)$ we consider the sub monoids $\aut_G(X)\subset \aut(X)$ and  $\aut_G^*(X)\subset \aut^*(X)$ defined by
$$
\aut_G(X)=\{f\in\aut( X),\,\,[f]\in G\},\quad \aut_G^*(X)=\{f\in\aut^* (X),\,\,[f]\in G\}
$$
\end{definition}
Note that  $\pi_0\aut_G(X)=G$ while $\pi_0\aut_G^*(X)$ is the subgroup $G^*\subset\cale^*(X)$ consisting of pointed homotopy classes  of pointed homotopy automorphisms  in $\aut_G^*(X)$. In other terms,
$$
G^*=\zeta^{-1}(G)=\{[f]^*\in\cale^*(X),\,\,\text{with $[f]\in G$}\}.
$$
Thus, note that $G^*$ is preserved by the action of  $\pi_1(X)$ and
$$
 G^*/\pi_1(X)\cong G.
$$
Then, the evaluation fibration $\map^*(X,X)\longrightarrow\map(X,X)\longrightarrow X$ restricts to a fibration $\aut_G^*(X)\to \aut_G(X)\to X$ which is extended on the right to provide a fibration sequence
$$
X\longrightarrow B\aut^*_G(X)\longrightarrow B\aut_G(X).
$$
This fibration sequence, see  \cite{fuen}, is universal with respect to fibrations with fiber $X$ and whose image of the holonomy action is contained in $G$. In other terms, the set
\begin{equation}\label{clasefib}
\fib_G(X,B)
\end{equation}
 of equivalence classes of  fibration sequences $X\to E\to B$ over $B$ for which  the image of the natural map $\pi_1(B)\to\cale(X)$  lies in $G$, is in bijective correspondence with the set of (free) homotopy classes $\{B,B\aut_G(X)\}$.

\medskip

 The counterpart of the above in the pointed setting is the following:

 \begin{definition}\label{pyp} Given  $\pi\subset \cale^*(X)$  a subgroup of pointed homotopy classes of pointed equivalences, define
$$
\aut_\pi^*(X)=\{f\in\aut^*(X),\,\,[f]^*\in\pi\}.
$$
\end{definition}

\begin{rem}\label{explico} Note that in general, given $G\subset\cale(X)$ and $\pi\subset\cale^*(X)$, $\aut_G^*(X)$ and $\aut_{\pi}^*(X)$ are different submonoids of $\aut^*(X)$. In particular, for the trivial subgroup $G=\{1\}\subset \cale(X)$, $\aut_1^*(X)$ is the non connected monoid of pointed maps freely homotopic to $\id_X$. However, for $\pi=\{1\}\subset\cale^*(X)$, $\aut_1^*(X)$ is the connected monoid of pointed maps  based homotopic to $\id_X$. In what follows, and to avoid confusion, we always make clear whether the considered subgroup is taken from $\cale(X)$ or $\cale^*(X)$.
\end{rem}
Recall that a fibration of pointed spaces is {\em pointed} if it has a section. Then, see for instance \cite[Theorem 2.2]{fuen},  there exists a ``universal'' pointed fibration,
\begin{equation}\label{clasi}
X\longrightarrow Z\longrightarrow B\aut_\pi^*(X)
\end{equation}
such that:  the set
$$
\fib_\pi^*(X,B)
$$
 of equivalence classes of  pointed fibrations  $F\to E\to B$ over $B$, with fiber homotopically equivalent to $X$, and for which  the image of the natural map $\pi_1(B)\to\cale^*(F)$  lies in $\pi$, is in bijective correspondence with the set of free homotopy classes $\{B,B\aut_\pi^*(X)\}$.

 \medskip

From this point on, we fix a finite nilpotent complex $X$. We also recall that an action of a group $G$ on an abelian group $H$ is {\em nilpotent} if the lower central series of the action,
$$
\Gamma^0\supset\dots\supset\Gamma^{i-1}\supset\Gamma^i\supset\dots
$$
where $\Gamma^0=H$ and $\Gamma^i$ is generated by $\{gh-h,\,g\in G,\,h\in \Gamma^{i-1}\}$, is finite. That is, $\Gamma^q=0$ for some $q$.

\medskip

Let first $G$ be a subgroup of ${\cal E}(X)$ which acts nilpotently on $H_*(X)$. Then, see \cite[Theorem D]{drorza},  both $B\aut_G(X)$ and $B\aut_G^*(X)$ are nilpotent spaces and thus $G\cong\pi_1 B\aut_G(X)$ and $G^*\cong\pi_1 B\aut_G^*(X)$ are nilpotent groups. Let
$$
\Gamma^0=H_*(X)\supset\dots\supset\Gamma^{i-1}\supset\Gamma^i\supset\dots\supset\Gamma^q=0
$$
be the central series of the $G$-action. That is, $\Gamma^i$ is generated by $f_*(\alpha)-\alpha$, with $[f]\in G$ and $\alpha\in \Gamma^{i-1}$.

\begin{definition}\label{K} Define $K\subset \cale(X)$ as the subgroup of self homotopy equivalences which ``stabilize'' the above series. That is,
$$
K=\{[f]\in\cale(X),\,\,\text{$f_*$ induce the identity on $\Gamma^i/\Gamma^{i+1}$, for all $i$}\,\}.
$$
Accordingly, define
 $$
K^*=\zeta^{-1}(K)=\{[f]^*\in\cale^*(X),\,\,\text{with $[f]\in K$}\}
$$
and note that $G\subset K\subset\cale(X)$ and $G^*\subset K^*\subset\cale^*(X)$. Observe that, by \cite[Theorem D]{drorza}, both $K$ and $K^*$ are  nilpotent groups.

\end{definition}

On the one hand  the following  straightforward homological reformulation of \cite[Theorem 3.3]{ma} characterizes the rationalization of $K$:

\begin{theorem}\label{teoracio} The  rationalization $K_\bq$ of $K$  is the subgroup of  ${\cal E}(X_\bq)$ which stabilize the following series,
\begin{equation}\label{final}
\Gamma^0_\bq=H_*(X_\bq)=H_*(X)_\bq\supset\dots\supset\Gamma^{i-1}_\bq\supset\Gamma^i_\bq\supset
\dots\supset\Gamma^q_\bq=0.
\end{equation}
 In other terms, a class $[f]$ of ${\cal E}(X_\bq)$ is in $K_\bq$ if it acts trivially on each $\Gamma^{i-1}_\bq/\Gamma^i_\bq$ for all $i\ge 0$. Moreover, the map
$
 K\to K_\bq$, $[f]\mapsto[f_\bq]$,
 is the rationalization. \hfill$\square$
 \end{theorem}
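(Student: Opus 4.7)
The plan is to reduce to \cite[Theorem 3.3]{ma}, whose statement concerns the analogous situation where $\cale(X)$ acts on $\pi_*(X)$ through a nilpotent central series, and to carry Maruyama's argument over verbatim after replacing $\pi_*$ by $H_*$. The fact that such a homological translation is permissible is what makes the result ``straightforward'': rationalization of nilpotent finite complexes is functorial, commutes with homology, and preserves exact sequences of finitely generated abelian groups, which are the only ingredients used in Maruyama's induction on the length of the filtration.

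First I would check that the map $K\to \cale(X_\bq)$, $[f]\mapsto [f_\bq]$, indeed lands in the stabilizer of the series \eqref{final}. Since $X$ is nilpotent, $X_\bq$ is well defined up to homotopy and $H_*(X_\bq)\cong H_*(X)\otimes\bq$; the endomorphism of $H_*(X_\bq)$ induced by $f_\bq$ is $f_*\otimes \mathrm{id}_\bq$. Because $\Gamma^i_\bq=\Gamma^i\otimes \bq$ and $f_*$ is the identity on each $\Gamma^{i-1}/\Gamma^i$, its rationalization is the identity on each $\Gamma^{i-1}_\bq/\Gamma^i_\bq$, so $[f_\bq]$ stabilizes \eqref{final}. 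Denote by $K^{\mathrm{geom}}_\bq\subset \cale(X_\bq)$ the subgroup of classes that stabilize \eqref{final}; we thus obtain a homomorphism $\iota\colon K\to K^{\mathrm{geom}}_\bq$.

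Second, I would establish that $K^{\mathrm{geom}}_\bq$ is a nilpotent $\bq$-group (i.e.\ uniquely divisible). This follows by induction on the length $q$ of the filtration: the subgroup stabilizing the first $i$ steps is a central extension of the subgroup stabilizing the first $i-1$ steps by the $\bq$-vector space of $\bq$-linear maps $\Gamma^{i-1}_\bq/\Gamma^{i}_\bq\to \Gamma^i_\bq$ commuting with the action (this is the homological counterpart of the integral computation in \cite{ma} and uses that $H_*(X)$ is finitely generated). The corresponding integral stabilizer is $K$, which sits inside as a lattice, and each inductive subquotient of $\iota$ is literally the rationalization $A\to A\otimes\bq$ of a finitely generated abelian group.

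Third, I would conclude via the universal property of Malcev completion: an inductive argument along the central series shows that any map from $K$ into a uniquely divisible nilpotent group factors uniquely through $\iota$, so $\iota$ is the rationalization. The main obstacle is the second step: verifying carefully that, at each stage of the induction, the ``$\bq$-derivations'' of the subquotient giving rise to $K^{\mathrm{geom}}_\bq$ really are the rationalization of the corresponding $\bz$-derivations giving $K$, i.e.\ that no rational classes arise that are not detected over $\bz$. This is precisely Maruyama's computation transcribed to the homological filtration, and uses the finite generation of $H_*(X)$ in an essential way.
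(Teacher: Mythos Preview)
Your proposal is correct and follows exactly the approach indicated in the paper: the authors state the theorem without proof, noting only that it is ``a straightforward homological reformulation of \cite[Theorem 3.3]{ma}''. Your sketch spells out precisely what that reformulation entails, so there is nothing to add.
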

 As $G$ is a subgroup of $K$, it follows that $G_\bq$  is a subgroup of $K_\bq$ and the map $
 G\to G_\bq$, $[f]\mapsto[f_\bq]$,
 is also the rationalization.

On the other hand, a completely analogous procedure  exhibits  $K^*_\bq$ containing $G^*_\bq$  and establishes that the map $
 G^*\to G^*_\bq$, $[f]^*\mapsto[f_\bq]^*$ is the rationalization.

 \begin{rem}\label{rev2} This defines  maps of monoids,
 $$
 \aut_G(X) \longrightarrow\aut_{G_\bq}(X_\bq),\qquad  \aut_G^*(X) \longrightarrow\aut_{G_\bq}^*(X_\bq),
 $$
 both sending a map to its rationalization. Moreover, and this is crucial in the next result, remark that, componentwise, these maps are also the rationalizations as, for each map $f\colon X\to X$, pointed in the second case,
 $$
 \map_{f_\bq}(X_\bq,X_\bq)\simeq\map_f(X,X)_\bq,\quad \map_{f_\bq}^*(X_\bq,X_\bq)\simeq\map_f^*(X,X)_\bq.
 $$
 \end{rem}
From all of the above, we may easily deduce:
\begin{proposition}\label{rationalbaut}
Let $X$ be a finite nilpotent complex and $G\subset\cale(X)$ be a subgroup which acts nilpotently on $H_*(X)$. Then, the rationalization of the classifying fibration sequence
\begin{equation}\label{rev3}
X\longrightarrow B\aut^*_G(X)\longrightarrow B\aut_G(X)
\end{equation}
has the homotopy type of the classifying fibration sequence
\begin{equation}\label{rev4}
X_\bq\longrightarrow B\aut^*_{G_\bq}(X_\bq)\longrightarrow B\aut_{G_\bq}(X_\bq).
\end{equation}
\end{proposition}
\begin{proof} Observe that the rationalization of (\ref{rev3})
$$
X_\bq\longrightarrow \bigl(B\aut^*_G(X)\bigr)_\bq\longrightarrow \bigl(B\aut_G(X)\bigr)_\bq
$$
is a fibration sequence which lies in $\fib_{G_\bq}\bigl(X_\bq,\bigl(B\aut_G(X)\bigr)_\bq\bigr)$ and thus, by the classifying property of (\ref{rev4}), it fits in a commutative diagram as follows:
\begin{equation}\label{rev5}
\xymatrix{X_\bq\ar[r]\ar@{=}[d]\ar[r]& \bigl(B\aut^*_G(X)\bigr)_\bq\ar[d]\ar[r]& \bigl(B\aut_G(X)\bigr)_\bq\ar[d]\\
X_\bq\ar[r]&B \aut_{G_\bq}^*(X_\bq)
 \ar[r]&
B \aut_{G_\bq}(X_\bq).\\}
\end{equation}
This can be extended to a homotopy commutative diagram of the form
$$
\xymatrix{\aut_G^*(X)_\bq\ar[r]\ar[d]_\simeq\ar[r]&\aut_G(X)_\bq\ar[d]_\simeq\ar[r]&
X_\bq\ar[r]\ar@{=}[d]\ar[r]& \bigl(B\aut^*_G(X)\bigr)_\bq\ar[d]\ar[r]& \bigl(B\aut_G(X)\bigr)_\bq\ar[d]\\
\aut_{G_\bq}^*(X_\bq)\ar[r]&\aut_{G_\bq}(X_\bq)
 \ar[r]&X_\bq\ar[r]&B \aut_{G_\bq}^*(X_\bq)
 \ar[r]&
B \aut_{G_\bq}(X_\bq).\\}
$$
where both rows are again fibration sequences and, in view of Remark \ref{rev2}, the first two vertical arrow are homotopy equivalences. Hence, all the vertical arrows in (\ref{rev5}) are also homotopy equivalences and the proposition follows.
\end{proof}

Now, let $L=(\libc(V),d)$ be the minimal Lie model of $X$. In view of Corollary \ref{corouno}, $\{X_\bq,X_\bq\}^*\cong\{L,L\}$. In particular, every pointed homotopy equivalence $X_\bq\stackrel{\simeq}{\to}X_\bq$ in $\cale^*(X_\bq)$ is identified with a homotopy class of a quasi-isomorphism $L\stackrel{\cong}{\to}L$ which is necessarily an automorphism as $L$ is connected \cite[Theorem 3.19]{bfmt0}.

\begin{definition}\label{explog} Denote by
$$
\calge\subset \aut(L)/\sim,
$$
the subgroup of homotopy classes of automorphisms of $L$ that corresponds to $G_\bq^*\subset\cale^*(X_\bq)$ under the isomorphism $\{L,L\}\cong\{X_\bq,X_\bq\}^*$.

As $G_\bq^*/\pi_1(X_\bq)\cong G_\bq$, by Corollary \ref{coropointed} we deduce  that $\calge$ is invariant by the $H_0(L)$ action and
$$
\calge/H_0(L)\cong G_\bq.
$$
Denote also,
 $$
 \aut_{\calge}(L)=\{\varphi\in\aut(L),\,[\varphi]\in \calge\},
 $$
 which is clearly a subgroup of $\aut(L)$. Accordingly, for $K\subset \cale(X)$ as in Definition \ref{K}, define
 $$
 \calk\subset\aut(L)/\sim
  $$
  isomorphic to $K_\bq^*\subset\cale^*(X_\bq)$ and
  $$
  \aut_\calk(L)=\{\varphi\in\aut(L),\,\,[\varphi]\in\calk\}.
  $$
 \end{definition}

 The counterpart of these groups are the following fundamental lie algebras of derivations:

 \begin{definition}\label{explogderK}
Define  $\derk L\subset \derr L$ as the connected  cdgl where
$$
\derk_{\ge 1}L= \derr_{\ge1}L\quad\text{and}\quad \derk_0L=\{\text{$\theta\in\derr_0L$ such that  $D\theta=0$  and $e^{\theta}\in\aut_{\calk}(L)$}\}.
$$
\end{definition}

\begin{rem}\label{derkcomplete}
Observe that $\derk L$ is in fact a  well defined cdgl: recall that  $L=(\libc(V),d)$ is the minimal model of $X$.  By the isomorphism $s^{-1}\widetilde{H}_*(X_\bq)\cong V$, the series    (\ref{final}) of Theorem \ref{teoracio}  is then of the form
  $$
 V=V^0\supset\dots\supset V^i\supset V^{i+1}\supset\dots\supset V^q=0
$$
and therefore,
$$
\aut_\calk(L)=\{f\in\aut(L),\,\,(f_*-\id_V)(V^i)\subset V^{i+1}\,\,\text{for all $i$}\,\}.
$$
We then apply Proposition \ref{exponencial} to conclude that $\derk L$ is precisely $\der L$ of Definition  \ref{derk} for the above filtration of $V$. Hence, by Proposition \ref{derkacomp},  $\derk L$ is a  well defined cdgl. Moreover, in view of Remark \ref{remarkexp}, the group $(\derk_0 L,*)$, endowed with the BCH product, is isomorphic to $\aut_\calk(L)$.

\end{rem}
 \begin{definition}\label{explogderG} Define
 $\derge L\subset\derk L$  by
$$
\derge_{\ge 1}L= \derr_{\ge1}L\quad\text{and}\quad \derge_0L=\{\text{$\theta\in\derr_0L$ such that  $D\theta=0$ and $e^{\theta}\in\aut_{\calge}(L)$}\}.
$$
Note again  that the group $(\derge_0 L,*)$ is identified, via Remark \ref{remarkexp}, with $\aut_{\calge}(L)$  which is a subgroup of $\aut_\calk(L)$. Thus,
 $\derge_0 L$ is a subgroup of $\derk_0 L$ for the BCH product.

\end{definition}

Then, the precise statement of Theorem \ref{mainre1intro}  is:

\begin{theorem}\label{mainre1} The maps
$$
L\stackrel{\ad}{\longrightarrow} \derge L \longrightarrow \derge L\timest sL
$$
constitute a cdgl fibration sequence whose realization is homotopy equivalent to the
classifying fibration
$$
X_\bq\longrightarrow B\aut^*_{G_\bq}(X_\bq)\longrightarrow B\aut_{G_\bq}(X_\bq)
$$
\end{theorem}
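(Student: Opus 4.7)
The plan is to extend the cdgl model of the evaluation fibration (Corollary~\ref{cor1}) rightward by two terms capturing the classifying spaces. First reduce to the rational setting via Proposition~\ref{rationalbaut} and fix the minimal Lie model $L$ of $X$. Applying Corollary~\ref{cor1} to $Y=X$ realizes $\Map^*(X,X_\bq) \to \Map(X,X_\bq) \to X_\bq$ as the cdgl sequence $\Hom(\overline\quic(L),L) \to \Hom(\quic(L),L) \to L$. By Remark~\ref{remark}, the pointed components in $\aut_G^*$ correspond to $\calge \cong G_\bq^*$, while the free homotopy classes in $\aut_G$ correspond to $\calge/H_0(L) \cong G_\bq$; restriction to these components, combined with $\Gamma$ and $\alpha^*$, yields a derivations-based model of $\aut_G^*(X_\bq) \to \aut_G(X_\bq) \to X_\bq$.

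The key step is to append the three-term sequence $L\stackrel{\ad}{\to}\derge L \to \derge L \timest sL$. The adjoint map $\ad$ lands in $\derge L$ because $e^{\ad_x}$ is an inner automorphism homotopic to $\id_L$, so its class in $\aut(L)/\!\sim$ lies in $\calge$. The second map is the canonical sub-cdgl inclusion, giving a short exact sequence $0\to\derge L\to\derge L\timest sL\to sL\to 0$. One then plans to verify that the realizations $\langle\derge L\rangle \simeq B\aut_{G_\bq}^*(X_\bq)$ and $\langle\derge L\timest sL\rangle \simeq B\aut_{G_\bq}(X_\bq)$, with $\ad$ realizing the classifying map $X_\bq \to B\aut_{G_\bq}^*(X_\bq)$ (geometrically, the holonomy action of $\pi_1(X_\bq)$ on the component of the identity).

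The main obstacle is identifying these realizations. The strategy is to match homotopy groups using $\pi_n\langle M\rangle = H_{n-1}(M)$ (with the BCH product for $n=1$): the exponential map gives $H_0(\derge L) \cong \calge \cong G_\bq^*$, invoking Theorem~\ref{cerrado} to ensure $\derge_0 L$ is complete as a sub Lie algebra so that the exponential and BCH are well-behaved; Theorem~\ref{corhom} applied to the identity component yields $H_{n-1}(\derge L) \cong \pi_{n-1}\aut_1^*(X_\bq) \cong \pi_n B\aut_{G_\bq}^*(X_\bq)$ for $n\geq 2$. The LES of the short exact sequence, with connecting map $H_{n-1}(L)\to H_{n-1}(\derge L)$ identified as $H_{n-1}(\ad)$, then gives $H_0(\derge L\timest sL) \cong \calge/H_0(L) = G_\bq$ and matches the higher $\pi_n$ of $B\aut_{G_\bq}(X_\bq)$. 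Since $B\aut_{G_\bq}^*(X_\bq)$ and $B\aut_{G_\bq}(X_\bq)$ are nilpotent (Dror--Zabrodsky), this $\pi_*$-level correspondence upgrades to an actual weak equivalence via the cdgl Whitehead theorem, and the naturality of the extended five-term sequence on homotopy groups confirms that the maps of the cdgl sequence realize to the maps of the classifying fibration.
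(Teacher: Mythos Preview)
Your proposal has a genuine gap: you never construct a \emph{map} between $\langle\derge L\rangle$ and $B\aut_{G_\bq}^*(X_\bq)$ (or between $\langle\derge L\timest sL\rangle$ and $B\aut_{G_\bq}(X_\bq)$). Matching homotopy groups is not enough; the Whitehead theorem requires an actual map inducing isomorphisms on $\pi_*$. Your final sentence asserts that ``naturality of the extended five-term sequence on homotopy groups confirms that the maps of the cdgl sequence realize to the maps of the classifying fibration,'' but this is circular: you need the comparison map first, and only then can you compare long exact sequences. Moreover, the claim that the exponential gives $H_0(\derge L)\cong\calge$ is not immediate---you know $\exp(\derge_0 L)=\aut_\calge(L)$ by definition, but passing to $H_0$ on one side and homotopy classes on the other requires an argument.

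The paper's approach supplies exactly what is missing. First, it verifies (via Remark~\ref{remi} and the computation $\pi_1\langle\derge L\timest sL\rangle=H_0(\derge L)/H_0(\ad)$) that the realized fibration $\langle L\rangle\to\langle\derge L\rangle\to\langle\derge L\timest sL\rangle$ has holonomy landing in $G_\bq$, hence lies in $\fib_{G_\bq}(X_\bq,\langle\derge L\timest sL\rangle)$. The universal property of the classifying fibration then \emph{produces} a comparison map $\langle\derge L\timest sL\rangle\to B\aut_{G_\bq}(X_\bq)$ as a pullback square. The substantial work is showing this map is an equivalence: for this the paper introduces the auxiliary cdgl $\Hom(\quic(L),L)\timest\derge L$, analyzes its component at the universal MC element $q=\overline{\id}_L$ (Lemmas~\ref{lafibra} and~\ref{elespaciototal}), and deduces (Proposition~\ref{propoimportante}) that the homotopy fiber of $\langle\ad\rangle$ is $\aut_{G_\bq}(X_\bq)$ with the map to $\langle L\rangle$ modeling evaluation. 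Extending the pullback diagram one step to the left then forces the classifying map to be an equivalence by the five lemma. This auxiliary construction and the appeal to universality are the missing ideas in your outline.
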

Here, $\ad$ denotes the adjoint operator and $(\derge L \timest sL,D)$ is the twisted product (\ref{slslrev1}), i.e.,   $sL$ is an abelian Lie algebra and
$$
Dsx=-sdx+\ad_x,\quad[\theta,sx]=(-1)^{|\theta|}s\theta(x),\quad x\in L,\quad \theta\in \derge L.
$$
Observe that combining this with Corollary \ref{cor1} we obtain:

\begin{corollary}\label{corofin} The realization of the cdgl sequence
\begin{equation}\label{larga}
\Hom(\overline\quic(L),L)\to\Hom(\quic(L),L)\to L\stackrel{\ad}{\to}\derge L\to\derge L\timest sL
\end{equation}
is of the homotopy type of
$$
\map^*(X_\bq,X_\bq)\to\map(X_\bq,X_\bq)\stackrel{\ev}{\to} X_\bq \to B\aut_{G_\bq}^*(X_\bq)\to  B\aut_{G_\bq}(X_\bq)
$$
In particular, restricting to the corresponding components in the above cdgl's produces a Lie model of the fibration sequence,
$$
\aut_{G_\bq}^*(X_\bq)\to\aut_{G_\bq}(X_\bq)\stackrel{\ev}{\to} X_\bq \to B\aut_{G_\bq}^*(X_\bq)\to  B\aut_{G_\bq}(X_\bq)
$$
\hfill$\square$
\end{corollary}

We now focus on a subgroup $\pi\subset \cale^*(X)$ of pointed homotopy classes of pointed equivalences which acts nilpotently on $\pi_*(X)$.
Then, by \cite[Theorem C]{drorza}, $B\aut_\pi^*(X)$ is a nilpotent complex and thus  $\pi$ is a nilpotent group. As in the homological case,  the rationalization $\pi_\bq$ of $\pi$ is similarly described as a subgroup of $\cale^*(X_\bq)$, this time with the aid of \cite[Theorem 3.3]{ma}. The analogs of Definitions \ref{explog} and  \ref{explogderG} are:

 \begin{definition}\label{explogpi} Define
$$
\Pi\subset \aut(L)/\sim,
$$
as the subgroup of homotopy classes of automorphisms of $L$ which corresponds to $\pi_\bq\subset\cale^*(X_\bq)$ under the isomorphism $\{L,L\}\cong\{X_\bq,X_\bq\}^*$ of Corollary \ref{corouno}.

 Consider also the subgroup of $\aut (L)$ given by
 $$
 \aut_{\Pi}(L)=\{\varphi\in\aut(L),\,[\varphi]\in \Pi\},
 $$
and define  $\derpi L\subset\derr L$  by,
$$
\derpi_{\ge 1}L= \derr_{\ge1}L\quad\text{and}\quad \derpi_0L=\{\text{$\theta\in\derr_0L$ such that  $D\theta=0$ and  $e^{\theta}\in\aut_{\Pi}(L)$}\}.
$$
As  $\pi_\bq$ acts nilpotently on $\pi_*(X_\bq)$ it does so in $H_*(X_\bq)$ (see \cite[Theorem 2.1]{hil}). Hence, $ \aut_{\Pi}(L)\subset  \aut_\calk(L)$ and thus, again by Remark \ref{remarkexp},   $\derpi_0 L$ is a well defined subgroup of $\derk_0 L$ for the BCH product.
\end{definition}

Finally, we will also assume that  $\pi_\bq$ is preserved by the action of $\pi_1(X_\bq)$, or equivalently, $\Pi$ is preserved by the action of $H_0(L)$. Then, Theorem \ref{mainre2intro} reads:

\begin{theorem}\label{mainre2} The twisted product
$$
L{\longrightarrow}L\timest \derpi L \longrightarrow \derpi L
$$
constitutes a cdgl fibration sequence whose realization is homotopy equivalent to the
classifying fibration
$$
X_\bq\longrightarrow Z_\bq\longrightarrow B\aut_{\pi_\bq}^*(X_\bq).
$$
\end{theorem}
Here $L\timest\derpi L$ is the restriction of the twisted product in (\ref{slslrev2}). That is, both terms are sub dgl's and $[\theta,x]= \theta(x)$ for any $\theta\in\derpi L$ and $x\in L$.
\begin{rem}\label{remark7} Note that, with the notation and nomenclature of the classical reference \cite[\S7]{may}, this theorem proves in particular that the geometric bar construction $B(*,\aut_\pi^*(X),X)$ has the rational homotopy type of the realization of $L\timest\derpi L$. Indeed in the universal classifying fibration (\ref{clasi}), $Z$ is precisely $B(*,\aut_\pi^*(X),X)$.
\end{rem}

\medskip

We point out that the hypothesis of Theorems \ref{mainre1} and \ref{mainre2} cannot be weakened. Let $K$ be a subgroup of $\cale^*(X_\bq)$ or $\cale(X_\bq)$ acting nilpotently on $\pi_*(X_\bq)$ or $H_*(X_\bq)$, and which does not necessarily arise as the rationalization of a nilpotent action on the homotopy or homology groups of $X$. Then, as shown in the following example, and even for $K$ abelian and $X$ simply connected, $B\aut_K^*(X_\bq)$ and $B\aut_K(X_\bq)$ do not lie in general in the image of the realization functor.

\begin{ex} Let $X=S^n\vee S^n$, with $n\ge 2$, whose minimal model is $M=(\lib(x,y),0)$ with $|x|=|y|=n-1$. Since $X_\bq$ is simply connected, free and pointed classes coincide. Consider the subgroup $K\subset\cale^*(X_\bq)=\cale(X_\bq)$   generated by the homotopy class of the automorphism $\varphi\colon M\stackrel{\cong}{\to} M$ given by $\varphi(x)=x+y$ and $\varphi(y)=y$. Obviously, $K$ acts nilpotently on both $\pi_*(X_\bq)$ and $H_*(X_\bq)$ as its central series vanishes at the second stage. Now, if $L$ is a cdgl for which $\langle L\rangle\simeq B\aut_K^*(X_\bq)$ it follows that $H_0(L)$, endowed with the BCH group structure, must coincide with $K$ which is in turn isomorphic to $\bz$.  However, a similar argument to the one in Proposition \ref{propo20} shows that this is not possible. The same applies to $B\aut_K(X_\bq)$.
\end{ex}

\begin{rem} Although all spaces involved in Theorems \ref{mainre1} and \ref{mainre2} are connected and nilpotent, it is important to stress that it is absolutely essential to work in $\catcdgl$ and not in the model category defined in \cite{nei} whose objects are  connected (not complete) dgl's. Among others, this is the main obstruction: fibrations in this model structure are surjective morphisms but there are fibrations of nilpotent complexes which cannot be modeled by  surjective  morphisms of connected dgl's.  A simple example of this kind of fibrations is given by the universal cover of the circle  $\br \to S^1$. However, it can be modeled by the following cdgl fibration in which we must allow elements of negative degrees in the source:
$$
\varphi\colon (\libc(x,y),d)\longrightarrow (\lib(x),0), \,\,\varphi(x)=x,\,\,\varphi(y)=0\esp{where} |x|=0,\quad dx=y.
$$
Note that the fibre of this morphism is given by,
$$
\ker\varphi=(\libc(a_n)_{n\ge0},d),\qquad a_n=\ad_x^n(y).
$$
Since each $a_n$ is a Maurer-Cartan element the realization of this cdgl is, as it should, a discrete countable space.  Other examples of this kind are given by the maps $X_\bq\to B\aut^*_{G_\bq}(X_\bq)$ and $X_\bq\to Z$ of Theorems \ref{mainre1} and \ref{mainre2} respectively, whose homotopy fibers $\aut_{G_\bq}(X_\bq)$ and $\aut_{\pi_\bq}^*(X_\bq)$ are  non-connected.
\end{rem}

\section{Modeling free and pointed classifying fibrations}\label{demostra}

In this section we complete  the proofs of Theorems \ref{mainre1} and \ref{mainre2}. We therefore adopt (and fix) the notation  in \S\ref{princi} and start with the following:

\begin{lemma}\label{welldefined} The maps
$$
L\stackrel{\ad}{\longrightarrow} \derge L \longrightarrow \derge L\timest sL,
$$
constitute a well defined cdgl sequence.
\end{lemma}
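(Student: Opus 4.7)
To prove the lemma we verify: (a) $\derge L$ is a sub dgl of $\derr L$, closed under both Lie bracket and differential; (b) $\derge L$ is complete with respect to a suitable filtration; (c) $\ad\colon L\to\derge L$ lands in $\derge L$ and is a cdgl morphism; (d) $\derge L\timest sL$ is a cdgl, and the canonical inclusion $\derge L\hookrightarrow\derge L\timest sL$ is a cdgl morphism. Since $\derge_{\ge 1}L=\derr_{\ge 1}L$ by definition, every substantive check is concentrated in degree $0$.

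The crux is bracket closure in $\derge_0 L$, which will follow from Theorem \ref{cerrado}. By Remark \ref{remarkexp}, $\exp$ gives a bijection between the complete Lie algebra $\derk_0 L$ (complete by Proposition \ref{derkacomp}) and the subgroup $\aut_\calk(L)\subset\aut(L)$. Since $\aut_\calge(L)\subset\aut_\calk(L)$ is a subgroup, its preimage $\derge_0 L=\exp^{-1}(\aut_\calge(L))$ is a subgroup of $(\derk_0 L,*)$ under BCH. Scalar closure uses the unique divisibility of $\calge\cong G^*_\bq$, which is the Malcev completion of the nilpotent group $G^*$ (nilpotency via \cite{drorza}): for $\lambda\in\bq$, the unique $\lambda$-power of $[e^\theta]$ in $\calge$ coincides with $[e^{\lambda\theta}]$ by uniqueness of $\lambda$-powers in the ambient uniquely divisible group $\aut_\calk(L)\cong K^*_\bq$. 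Group-completeness of $\derge_0 L$ in the sense of \S\ref{cs} is automatic since $G^*_\bq$ nilpotent has a terminating descending central series. Theorem \ref{cerrado} then exhibits $\derge_0 L$ as a sub Lie algebra of $\derk_0 L$; combined with the trivial mixed-degree cases this establishes closure of $\derge L$ under the Lie bracket.

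For closure under the differential, only the boundary $D\colon\derr_1 L\to\derr_0 L$ requires work: for $\theta\in\derr_1 L$, a standard derivation-homotopy argument produces a cdgl morphism $\Phi\colon L\to L\otimesc\land(t,dt)$ with $\varepsilon_0\circ\Phi=\id_L$ and $\varepsilon_1\circ\Phi=e^{D\theta}$, so $[e^{D\theta}]=[\id_L]\in\calge$ and $D\theta\in\derge_0 L$. For $\ad\colon L\to\derge L$, only degree $0$ requires argument: by Theorem \ref{modeloaccion}, $[e^{\ad_x}]=[x]\bulletchi[\id_L]$ under the action of $H_0(L)\cong\pi_1(X_\bq)$ on pointed classes; since $G^*_\bq=\zeta^{-1}(G_\bq)$ is $\pi_1(X_\bq)$-stable by the very definition of $\zeta$-preimage (the action preserves free homotopy classes), we get $[e^{\ad_x}]\in\calge$, so $\ad_x\in\derge_0 L$. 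The standard identities $\ad_{[x,y]}=[\ad_x,\ad_y]$ and $D\ad_x=\ad_{dx}$ make $\ad$ a dgl morphism.

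Completeness of $\derge L$ follows by restricting the filtration $\{E^n\}$ from Proposition \ref{derkacomp} and running the same argument verbatim; the twisted product $\derge L\timest sL$ is a cdgl by Proposition \ref{excusa} applied with $\derge L$ in place of $\der L$. The inclusion and $\ad$ preserve the chosen filtrations by construction. The chief obstacle is the degree-$0$ bracket closure, where Theorem \ref{cerrado} is invoked: its hypotheses rest on the $\bq$-locality (for scalar closure) and nilpotency (for group-completeness) of $G^*_\bq$, both of which must be tracked carefully through the Malcev completion.
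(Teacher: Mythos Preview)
Your proof is correct and follows essentially the same route as the paper: identify $\derk L$ with the $\der L$ of \S\ref{comple}, invoke Theorem \ref{cerrado} for bracket closure in degree $0$, exhibit a derivation-homotopy to show $e^{D\theta}\sim\id_L$, and use the $H_0(L)$-action to place $\ad_x$ in $\derge_0 L$. One small imprecision: you justify group-completeness of $\derge_0 L$ by saying ``$G^*_\bq$ nilpotent has a terminating descending central series,'' but $\derge_0 L$ is not $G^*_\bq$; it surjects onto $\calge\cong G^*_\bq$ with kernel the derivations whose exponential is homotopic to $\id_L$, and an extension of nilpotent by nilpotent need not be nilpotent. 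The paper is equally terse here (it simply says ``apply Theorem \ref{cerrado}''), so this is not a divergence from the paper's argument, but if you want to be careful you should argue instead that $\derge_0 L$ is the preimage of the closed subgroup $\calge\subset\calk$ under the continuous homomorphism $\derk_0 L\to\calk$, $\theta\mapsto[e^\theta]$, hence closed in the complete group $\derk_0 L$ and therefore itself complete.
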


\begin{proof} We begin by proving that $\derge L$ is in fact a well defined cdgl. First, we see that $\derge_0 L$ is a complete Lie algebra. Up to this point, it is simply a subgroup of $\derk_0 L$ for the BCH product. However, recall from Remark \ref{derkcomplete} that $\derk_0 L$ coincides with $\der_0 L$ of Definition \ref{derk} which, by Lemma \ref{filderk}, is complete with respect to the filtration $\{\fil^n\}_{n\ge 1}$ in (\ref{filrev}). For each $n\ge 1$ denote $\fil^n_{\mathcal G}=\derge_0 L\cap \fil^n$ and note that this a subgroup of $\derge_0 L$ for the BCH product. Moreover, initially just as groups,
$$
\derge_0 L=\varprojlim_{n\ge 1}\derge_0 L/\fil^n_{\mathcal G},
$$
and thus $\derge_0 L$ is a pronilpotent group.

We check now that $\derge_0 L$ is a $0$-local group. Equivalently, we see that $\aut_{\mathcal G} (L)$ is $0$-local. First, for each $k\ge 1$, the map $\varphi\mapsto\varphi^k$ of $\aut_{\mathcal G}(L)$ is injective since this is a subgroup of  $\aut_{\mathcal K}(L)$ which is $0$-local. For the same reason, given $\varphi\in \aut_{\mathcal G} L$ and $k\ge 1$, there exists $\xi\in\aut_{\mathcal K}(L)$ such that $\xi^k=\varphi$. We finish by checking that $\xi$ lives in $\aut_{\mathcal G}(L)$. Since $\mathcal G$ is $0$-local there exists $\psi\in \aut_{\mathcal G}(L)$ such that $\psi^k\sim \varphi$. Thus, $\xi^k\sim \psi^k$ and, since $\mathcal K$ is $0$-local, it follows that $\xi\sim\psi$ so that $\xi\in\aut_{\mathcal G}(L)$.

 The pronilpotent and $0$-local character of $\derge_0 L$ let us apply Theorem \ref{cerrado} to conclude that this is a (Malcev) complete subgroup of $\derk_0 L$ and thus, by the Malcev category isomorphism (see \S3), $\derge_0 L$ is a complete Lie algebra with respect to the filtration $\fil^n_{\mathcal G}$.

 We check now that $D(\derr_1 L)\subset\derge_0 L$ by observing that,
$$
e^{D\eta}\sim\id L,\qquad\text{for any $\eta\in \derr_1 L$}.
$$
On the one hand, since the differential in $L=(\libc(V),d)$ is decomposable $(D\theta)(V)\subset \libc^{\ge 2}(V)$ and thus $e^{D\eta}$ is always well defined. On the other hand,  define a derivation $\tilde\eta$ in $\land (t,dt)\otimesc L$ by
$$\tilde \eta(t^n\otimes x)=t^{n+1}\otimes \eta(x), \quad \tilde\eta(t^ndt\otimes x)=-t^{n+1} dt\otimes \eta(x),\quad  n\geq 0, \quad x\in L,$$
and consider the cdgl morphism
$$\Phi=e^{D\tilde\eta}\circ \iota\colon L\to \land(t,dt)\otimesc L$$
where $\iota(x)=1\otimes x$. A straightforward computation shows that
$\varepsilon_0\circ\Phi=\id_L$ and $\varepsilon_1\circ \Phi= e^{D\eta}$ so that
$e^{D\eta}\sim \id_L$.

  All of the above shows that $\derge L$, and thus $\derge L\timest sL$, are sub dgl's of $\derk L$ and $\derk L\timest sL$ respectively, which are complete by Propositions \ref{derkacomp} and \ref{excusa}. Hence,  $\derge L$ and $\derge L\timest sL$ are also complete for the induced filtrations and all the objects in the statement are cdgl's.

We finish by checking that the image of the adjoint operator lies in $\derge L$. By definition, this amounts to say that, for each $x\in L_0$,  the homotopy class of $e^{\ad_x}$ lives in $\calge$. But this is trivial as, with the notation in Definition \ref{granaccion}, $[e^{\ad_x}]=[x]\bulletchi [\id_L]\in \calge $ since $\calge$ is closed by the action of $H_0(L)$.
\end{proof}

Next, consider the restriction of (\ref{clave}) to
\begin{equation}\label{fibprincipal}
0\to\Hom(\quic(L),L)\longrightarrow \Hom(\quic(L),L)\,\timest\,\derge L{\longrightarrow} \derge  L\to 0.
\end{equation}
Recall that  both terms in the middle  are sub dgl's and
$
[\theta,f]=\theta\circ f$ with $\theta\in\derge L$ and  $ f\in \Hom(\quic(L),L)$.

Fix the  MC element  $q\colon \quic(L)\to L$ of $\Hom(\quic(L),L)$
defined in Remark \ref{remark}.  With the notation in (\ref{varphifree}), $q=\overline{\id}_L$ as it represents  the homotopy class of the identity in $\{L,L\}/H_0(L)$ which, in turn, is identified with $\id_{X_\bq}$.

Note that (\ref{fibprincipal}) is a cdgl fibration sequence, so is its realization,
$$
\langle \Hom(\quic(L),L)\rangle\longrightarrow \langle \Hom(\quic(L),L)\,\timest\,\derge L\rangle {\longrightarrow} \langle \derge  L\rangle.
$$
We restrict this fibration, in which the base $ \langle \derge  L\rangle$ is connected, to the path component of the total space  $\langle \Hom(\quic(L),L)\,\timest\,\derge L\rangle$ containing the $0$-simplex $q$. We then obtain  another fibration
\begin{equation}\label{fibracion}
F\longrightarrow \langle \Hom(\quic(L),L)\,\timest\,\derge L\rangle^q\longrightarrow  \langle \derge  L\rangle.
\end{equation}
for which we identify its fiber and total space in the next results.

\begin{lemma}\label{lafibra} $F={\amalg}_{[\varphi]\in\calge/H_0(L)}\,\,\langle  \Hom(\quic(L),L)\rangle^{\overline\varphi}$.
\end{lemma}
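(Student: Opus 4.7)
My plan is to recognise $F$ as the collection of those components $\langle \Hom(\quic(L),L)\rangle^{\overline\varphi}$ whose inclusion into the realisation of the twisted product lands inside the component of $q$, and then to decide which components these are by analysing the gauge equivalence class of $q$ inside $\Hom(\quic(L),L)\,\timest\,\derge L$.

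First, I would observe that since $\derge L$ is connected, $\langle \derge L\rangle$ is path-connected with basepoint $0$, so $F$ is simply the intersection of $\langle \Hom(\quic(L),L)\rangle$ (embedded in the realisation of the total space through $\Hom(\quic(L),L)\hookrightarrow \Hom(\quic(L),L)\timest \derge L$) with the $q$-component. Combining the decomposition (\ref{realizasimp}) with the identification $\widetilde\mc(\Hom(\quic(L),L))\cong\{L,L\}/H_0(L)$ from Remark \ref{remark}, the task reduces to determining which MC classes $[\overline\varphi]$ in $\Hom(\quic(L),L)$ become identified with $[q]$ once passed through the twisted product.

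The key computation is the $\derge_0 L$-part of the enlarged gauge action. Every $\theta\in\derge_0 L$ is a $D$-cycle by definition, and the twisted bracket reads $[\theta,\overline\varphi]=\theta\circ \overline\varphi$, so the gauge formula recalled in \S\ref{prelimi} collapses to
$$\theta\,\cG\,\overline\varphi \,=\, e^{\ad_\theta}(\overline\varphi) \,=\, e^\theta\circ\overline\varphi \,=\, \overline{e^\theta\circ\varphi}.$$
The $\Hom(\quic(L),L)_0$-part of the gauge action is the ordinary one on $\Hom(\quic(L),L)$ and already realises the quotient $\{L,L\}/H_0(L)$. Using Proposition \ref{exponencial} (as invoked in the proof of Lemma \ref{welldefined}) to identify $\exp(\derge_0 L)=\aut_\calge(L)$, and using that $\calge$ is a subgroup of $\aut(L)/\sim$ invariant under the $H_0(L)$-action, the orbit of $q=\overline{\id_L}$ under the combined action consists exactly of the $\overline\varphi$ with $[\varphi]\in\calge$. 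Quotienting by the already-present $\Hom(\quic(L),L)_0$-gauge then produces the indexing set $\calge/H_0(L)$, as claimed.

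The main subtlety I foresee is justifying cleanly that combining the two kinds of gauge actions contributes no further elements to the orbit, despite the nontrivial cross bracket $[\theta,x]=\theta(x)$ mixing $\derge_0 L$ with $L_0\subset \Hom(\quic(L),L)_0$ inside the twisted product. I expect this to follow from $\Hom(\quic(L),L)$ being a cdgl ideal in the twisted product, so that at the BCH-group level one obtains a semidirect-product decomposition and the full orbit is generated by the orbits of the two summand subgroups acting separately.
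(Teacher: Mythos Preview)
Your plan is correct and matches the paper's own proof: both identify $F$ with the fibre components whose MC representatives are gauge-equivalent to $q$ inside the twisted product, and both compute the $\derge_0 L$-gauge action explicitly to obtain $\theta\,\cG\,q=\overline{e^\theta}$, reducing the question to $\exp(\derge_0 L)=\aut_\calge(L)$. The subtlety you flag about the mixed $(\Hom(\quic(L),L)_0,\derge_0 L)$-action is real---the paper simply asserts that a gauge element relating $q$ to $\overline\varphi$ may be taken purely in $\derge_0 L$ without explaining why---and your semidirect-product argument (using that $\Hom(\quic(L),L)$ is an ideal, so any degree-$0$ element factors as $g'*\theta$ and the action decomposes as $g'\,\cG\,(\theta\,\cG\,q)$) is precisely the justification that step needs.
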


\begin{proof} Observe that $F$ does not have to be connected as it is formed by  all the path components of $\langle \Hom(\quic(L),L)\rangle$ lying in $\langle \Hom(\quic(L),L)\,\timest\,\derge L\rangle^q$. That is, and  with notation in Remark \ref{remark},
$$
F={\amalg}_{\varphi}\,\,\langle  \Hom(\quic(L),L)\rangle^{\overline\varphi},
$$
where $\varphi\colon L\to L$ runs through the homotopy classes of cdgl morphisms such that\break   the MC element $\overline\varphi=\varphi q$ of $\Hom(\quic(L),L)$ is gauge related to  $q$ when considered as MC elements of  $\Hom(\quic(L),L)\,\timest\,\derge L$. That is, $\overline\varphi$ and $q$ represent the same element in $\widetilde\mc(\Hom(\quic(L),L)\,\timest\,\derge L)$.

Now, again in view of Remark \ref{remark}, $\overline\varphi$ and $q$ are already gauge related in $\Hom(\quic(L),L)$ if and only if $\varphi\sim\id_L$. If  this is not the case, but still $\overline\varphi$ is gauge related to $q$ in  $\Hom(\quic(L),L)\,\timest\,\derge L$, there must be a derivation,
 $$
 \theta\in \derge_0 L\esp{such that} \theta\gauge q=\overline\varphi.
 $$
That is,
$$
e^{\ad_\theta}(\overline\varphi)-\frac{e^{\ad_\theta}-1}{\ad_\theta}(D\theta)=q.
$$
Since $\theta$ is a cycle and $\overline\varphi=\varphi q$, this becomes,
$$
e^\theta=\varphi,\esp{that is,} \varphi\in\aut_\calge L.
$$
 Conversely, given  an automorphism $ \varphi\in\aut_\calge L$ consider $\theta=\log\varphi\in\der_0 L$ and one easily checks, as above, that $\theta\gauge q=\overline\varphi$.
\end{proof}

\begin{lemma}\label{elespaciototal}
$(\Hom(\quic(L),L)\,\timest\,\derge L)^q\simeq L$.
\end{lemma}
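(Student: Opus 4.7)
I would prove the lemma by constructing an explicit cdgl weak equivalence
$$\Phi\colon L \longrightarrow (\Hom(\quic(L),L)\timest\derge L, D_q)^q.$$
Write $N = \Hom(\quic(L),L)\timest\derge L$ and let $\iota\colon L\hookrightarrow\Hom(\quic(L),L)$ be the embedding sending $y$ to the map $1\mapsto y$ that vanishes on $\overline\quic(L)$; since $\Delta 1 = 1\otimes 1$, this is a cdgl morphism with $[\iota y_1,\iota y_2] = \iota[y_1,y_2]$. Define
$$\Phi(x) = (-\iota x,\ad_x).$$
A direct computation using $[\iota y_1,\iota y_2]=\iota[y_1,y_2]$, $\ad_x\circ\iota y=\iota[x,y]$, and $[\ad_x,\ad_y]=\ad_{[x,y]}$ confirms the identity $[\Phi(x),\Phi(y)]=\Phi([x,y])$. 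Compatibility with $D_q$ rests on the key convolution identity
$$[q,\iota x] = -(-1)^{|x|}\,\ad_x\circ q,$$
obtained by inspecting $\Delta$ on $\overline\quic(L)$; this makes the two cross contributions in $D_q(-\iota x,\ad_x)$ cancel, leaving $D_q\Phi(x)=\Phi(dx)$. Since $L$ is connected, $\Phi(L_0)\subset\ker D_q|_{N_0}$ and $\Phi$ factors through $(N,D_q)^q$.

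Both $L$ and $(N,D_q)^q$ are connected cdgl's, so their Maurer-Cartan moduli are trivial and, by the weak-equivalence criterion of \S\ref{prelimi}, it suffices to show $\Phi$ induces an isomorphism on homology in non-negative degrees. For this, observe that
$$\ev\colon N\to L, \qquad \ev(f,\theta)=f(1),$$
is a chain map for $D_q$---though not a Lie morphism---because $[q,f](1)=0$ and $(\theta\circ q)(1) = \theta(0) = 0$; moreover $\ev\circ\Phi=-\id_L$. Consequently $\Phi_*$ is a section of $-\ev_*$ on homology, and the problem reduces to proving $H_n(\ker\ev,D_q)=0$ for all $n\geq 0$.

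As a graded vector space $\ker\ev = \Hom(\overline\quic(L),L)\oplus\derge L$, and the restriction of $D_q$ yields the short exact sequence of chain complexes
$$0\to (\Hom(\overline\quic(L),L),D_q)\to (\ker\ev,D_q)\to \derge L\to 0.$$
Via the isomorphism $\Gamma$ of Theorem \ref{mappingder} combined with the quasi-isomorphism $\alpha^*\colon \derr L\to \derr_\alpha(\quil\quic(L),L)$, the first term is quasi-isomorphic to $\des\,\derr L$. A direct calculation---lifting $[\theta]\in H_n(\derge L)$ to $(0,\theta)$, noting $D_q(0,\theta) = (-(-1)^{|\theta|}\theta\circ q, 0)$, then unwinding $\Gamma$---identifies the connecting homomorphism, up to sign, with the canonical inclusion $H_n(\derge L)\to H_n(\derr L)$. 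By the structural facts used in the proof of Lemma \ref{welldefined}, namely $\derge_{\geq 1}L = \derr_{\geq 1}L$ and $D(\derr_1 L)\subset\derge_0 L$, this inclusion is an isomorphism in every degree $\geq 1$ and is injective in degree $0$; the long exact sequence then forces $H_n(\ker\ev,D_q) = 0$ for all $n\geq 0$, completing the proof.

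The main technical obstacle will be the sign and convention bookkeeping when identifying the connecting homomorphism with the inclusion $\derge L\hookrightarrow \derr L$, which requires a careful chase through $\Gamma$, $\alpha$, and the convolution bracket on $\Hom(\quic(L),L)$. Once that identification is in hand, the structural input from Lemma \ref{welldefined} delivers the required acyclicity immediately.
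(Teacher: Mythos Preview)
Your proof is correct and shares the paper's core idea: both define the same map $\gamma(x)=(-\iota x,\ad_x)$ from $L$ into $(\Hom(\quic(L),L)\timest\derge L,D_q)$ and show it is a quasi-isomorphism onto the connected cover. The difference lies only in how the quasi-isomorphism is verified. The paper uses $\Gamma$ and $\alpha^*$ to replace the whole target by the simpler chain complex $(\des\derr L\timest L\timest\derge L,\widetilde D)$, then checks by a two-line cycle/boundary computation that the induced map $\widetilde\gamma\colon L\to(\des\derr L\timest L\timest\derge L,\widetilde D)^0$ is a homology isomorphism. You instead exploit the chain-level retraction $-\ev$ to split off $\Phi(L)$ and reduce to the acyclicity of $\ker\ev$ in non-negative degrees, which you obtain from the long exact sequence of $0\to(\Hom(\overline\quic(L),L),D_q)\to\ker\ev\to\derge L\to 0$ after identifying the connecting homomorphism with $\pm$ the inclusion $H_*(\derge L)\hookrightarrow H_*(\derr L)$ (this identification is indeed straightforward: one checks that $\theta\circ q$ equals $\Gamma(\des\alpha^*(\pm\theta))$ on the nose, since both vanish on $\land^{\ge 2}sL$ and send $sx$ to $\pm\theta(x)$). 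The two arguments use exactly the same ingredients; yours is a bit more conceptual, the paper's a bit more hands-on.
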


\begin{proof}  Consider the map,
 $$
  \gamma\colon L\longrightarrow(\Hom(\quic(L),L)\,\timest\,\derge L, D_q),\quad \gamma(x)=-x+\ad_x,\quad x\in L.
  $$
Note that in view of  (\ref{iso}), $\Hom(\quic(L),L)\cong \Hom(\overline\quic(L),L)\timest L$. Also, as shown in Lemma \ref{welldefined}, $\ad_x\in\derge L$ for any $x\in L$. Thus,   $\gamma$ is well defined.

On the other hand, recall from (\ref{iso}) and (\ref{fibprincipal}) that, in
$$
\Hom(\quic(L),L)\,\timest\,\derge L\cong \Hom(\overline\quic(L),L)\timest L\timest \derge L,
$$
the differential $D_q$ is defined as:
$$
\begin{aligned}
D_qf=&Df+[q,f],\quad f\in \Hom(\quic(L),L),\\
D_qx=&dx+[q,x]=dx-(-1)^{|x|}\ad_x\circ q,\quad x  \in L,\\
D_q\theta=&D\theta+[q,\theta]=D\theta-(-1)^{|\theta|}\theta\circ q,\quad\theta\in \derge L.\\
\end{aligned}
$$
Thus, a simple computation shows that $\gamma$ commutes with differentials. We then show that the component of $\gamma$ at $0$,
$$
\gamma\colon L\stackrel{\simeq}{\longrightarrow}(\Hom(\quic(L),L)\,\timest\,\derge L, D_q)^0=(\Hom(\quic(L),L)\,\timest\,\derge L)^q
$$
is a quasi-isomorphism.

We  recall again that $q=\overline\id_L$ with the notation in (\ref{varphifree}). In this particular case, the  cdgl isomorphism in (\ref{bueniso}) and the quasi-isomorphism of chain complexes  in (\ref{alfa}) become respectively,
$$
 \Gamma\colon \des\derr_{\alpha}(\quil\quic(L),L)
      \stackrel{\cong}{\longrightarrow}
      ( \Hom(\overline\quic(L),L),D_{q})\esp{and} \alpha^*\colon \derr L\stackrel{\simeq}{\longrightarrow}\derr_{\alpha}(\quil\quic(L'),L).
      $$
      The  composition $\Gamma\circ s^{-1}\alpha^*$ is then a quasi-isomorphism of chain complexes
      $$
      s^{-1} \derr L\stackrel{\simeq}{\longrightarrow} ( \Hom(\overline\quic(L),L),D_{q})
      $$
      which trivially extends, by the identity on $L\timest\derge$, to a quasi-isomorphism
      $$
      (s^{-1} \derr L\timest L\timest\derge L,\widetilde D)\stackrel{\simeq}{\longrightarrow}(\Hom(\overline\quic(L),L )\timest L\timest \derge L,D_q)
      $$
      where, on the left,

$$
\begin{aligned}
&\dtil s^{-1}\eta=-s^{-1}D\eta,\quad \eta\in \derr L,\\
&\dtil x=dx-s^{-1}\ad_x,\quad x  \in L,\\
&\dtil\theta=D\theta-s^{-1}\theta,\quad\theta\in \derge L.\\
\end{aligned}
$$
Obviously $\gamma$ factor through this map,
$$
\xymatrix{
 L\ar[r]^(.25){\widetilde\gamma}\ar[rd]_(.40)\gamma &  (s^{-1} \derr L\timest L\timest\derge L,\widetilde D) \ar[d]_\simeq \\
	& (\Hom(\overline\quic(L),L)\timest L\timest\derge L,D_q),}
$$
where, again, $\widetilde\gamma(x)=-x+\ad_x$. It is enough then to check that the component of $\widetilde\gamma$ at $0$,
$$
\widetilde\gamma\colon L\stackrel{\simeq}{\longrightarrow}(s^{-1} \derr L\timest L\timest\derge L,\widetilde D)^0
$$
is a quasi-isomorphism. Let $s^{-1}\eta+x+\theta$ be a $\dtil$-cycle in $s^{-1} \derr L\timest L\timest\derge L$
of non-negative degree. That is, $dx=D\theta=0$ and $D\eta+ad_x+\theta=0$. Then,
$$
s^{-1}\eta+x+\theta+ \widetilde\gamma(x)=s^{-1}\eta+\theta+\ad_x=-\dtil\eta.
$$
This shows that $H(\widetilde\gamma)$ is surjective. Finally, if $x\in L$ is a cycle for which $\widetilde\gamma(x)=\dtil(s^{-1}\eta+y+\theta)$, then $-dy=x$ and $H(\widetilde\gamma)$ is also injective.

Furthermore, observe that the projection,
\begin{equation}\label{rho}
\rho\colon (\Hom(\quic(L),L)\,\timest\,\derge L)^q\stackrel{\simeq}{\longrightarrow} L
\end{equation}
is a retraction of $\gamma$ and therefore, it is also a quasi-isomorphism.

\end{proof}

As a crucial consequence of the past results we get:

\begin{proposition}\label{propoimportante} There exists a fibration sequence
$$
F\stackrel{\xi}{\longrightarrow}\langle L\rangle\stackrel{\langle\ad\rangle}{\longrightarrow } \langle\derge L\rangle
$$
such that $\xi$ is homotopy equivalent to the evaluation fibration $\ev\colon \aut_{G_\bq}(X_\bq)\to X_\bq$.
\end{proposition}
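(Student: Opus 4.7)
The plan is to identify both ends of the fibration (\ref{fibracion}) via the two preceding lemmas and then track, through those identifications, what the fibre map $\xi$ becomes. Write $T = \Hom(\quic(L),L)\timest \derge L$ for short. By Corollary \ref{coroimp} applied to $C=\quic(L)$, the restriction (\ref{fibprincipal}) is a short exact sequence of cdgl morphisms. Its realization is therefore a fibration of simplicial sets with connected base $\langle \derge L\rangle$, and restricting to the path component of the total space containing the MC element $q$ yields (\ref{fibracion}):
$$F\longrightarrow \langle T\rangle^q\longrightarrow \langle \derge L\rangle.$$

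First I would replace the middle term using Lemma \ref{elespaciototal}. The quasi-isomorphism $\rho\colon T^q\stackrel{\simeq}{\to} L$ of (\ref{rho}) is a weak equivalence of connected cdgl's, so $\langle \rho\rangle\colon \langle T\rangle^q\stackrel{\simeq}{\to}\langle L\rangle$ is a homotopy equivalence. To check that, under this equivalence, the projection $\langle T\rangle^q\to \langle \derge L\rangle$ becomes $\langle \ad\rangle$, I would use the section $\gamma\colon L\to T$ of $\rho$ defined by $\gamma(x)=-x+\ad_x$. The composition of $\gamma$ with the cdgl projection $T\to \derge L$, $(f,\theta)\mapsto \theta$, is precisely the adjoint operator $\ad\colon L\to \derge L$, which is well defined by Lemma \ref{welldefined}. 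Realizing, this shows the fibration (\ref{fibracion}) is homotopy equivalent to a fibration sequence of the form $F\stackrel{\xi}{\to}\langle L\rangle\stackrel{\langle \ad\rangle}{\to}\langle \derge L\rangle$.

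Next I would identify the fibre and the map $\xi$. By Lemma \ref{lafibra}, $F=\amalg_{[\varphi]\in \calge/H_0(L)}\langle \Hom(\quic(L),L)\rangle^{\overline\varphi}$. By Definition \ref{explog} and the discussion preceding it, $\calge/H_0(L)\cong G_\bq$, so the index set is exactly the set of free rational homotopy classes in $G_\bq\subset \cale(X_\bq)$. On each component, Remark \ref{remark} together with Corollary \ref{cor1} gives $\langle \Hom(\quic(L),L)\rangle^{\overline\varphi}\simeq \map_f(X_\bq,X_\bq)$ for the map $f\colon X_\bq\to X_\bq$ in the class corresponding to $\varphi$. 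Taking the coproduct over $[f]\in G_\bq$ yields $F\simeq \aut_{G_\bq}(X_\bq)$.

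Finally I would verify that $\xi$ corresponds to the evaluation. The map $\xi$ is, by construction, the realization of the inclusion $F\hookrightarrow \langle T\rangle^q$ followed by $\langle \rho\rangle$. Using (\ref{iso}), write $\Hom(\quic(L),L)\cong \Hom(\overline\quic(L),L)\timest L$; the $L$ summand is precisely evaluation at $1\in\bq\subset \quic(L)$. Since $\rho$ is the retraction of $\gamma(x)=-x+\ad_x$, its restriction to $\Hom(\quic(L),L)\subset T$ is (up to sign) this evaluation at $1$. By Theorem \ref{modelohom}, realizing this evaluation yields exactly the topological evaluation $\map(X_\bq,X_\bq)\to X_\bq$; restricting to the components corresponding to $G_\bq$ gives $\xi\simeq \ev\colon \aut_{G_\bq}(X_\bq)\to X_\bq$, as required.

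The main obstacle I anticipate is precisely this last identification: keeping track through the twisted product $T$ of how the formal retraction $\rho$ of Lemma \ref{elespaciototal} restricts to the fibre $\Hom(\quic(L),L)$, and matching it with the evaluation-at-$1$ map appearing in Corollary \ref{cor1}. Once that bookkeeping is done, the proposition follows by assembling Lemmas \ref{welldefined}, \ref{lafibra}, and \ref{elespaciototal}.
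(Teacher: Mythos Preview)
Your proof is correct and follows essentially the same strategy as the paper: factor $\langle\ad\rangle$ through the fibration (\ref{fibracion}) via $\gamma$, identify the fibre componentwise through Lemma \ref{lafibra}, and track the induced map $\xi$ as the composite $F\hookrightarrow\langle T\rangle^q\xrightarrow{\langle\rho\rangle}\langle L\rangle$. The only difference is that for the final identification of $\xi$ with the evaluation map the paper invokes Proposition \ref{escapa} directly on each component $\langle\Hom(\quic(L),L)\rangle^{\overline\varphi}\to\langle L\rangle$, whereas you unwind this by hand via the evaluation-at-$1$ map of Corollary \ref{cor1}; since Proposition \ref{escapa} is itself an immediate consequence of Corollary \ref{cor1}, the two arguments coincide.
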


\begin{proof}
With $\gamma$ as in Lemma \ref{elespaciototal}, consider the commutative diagram
$$
\xymatrix{
 (\Hom(\quic(L),L)\,\timest\,\derge L)^q\ar[r] &  \derge L. \\
	  L \ar[u]_\simeq^\gamma \ar[ru]_(.45)\ad &}
$$
whose realization
$$
\xymatrix{
 F\ar[r]&\langle \Hom(\quic(L),L)\,\timest\,\derge L\rangle^q\ar[r] & \langle \derge L\rangle. \\
	 &\langle L \rangle \ar[u]_\simeq^{\langle \gamma\rangle} \ar[ru]_(.45){\langle \ad\rangle} &}
$$
 exhibits a factorization of $\langle \ad\rangle $ as a weak equivalence followed by the fibration in (\ref{fibracion}). Next, define $\xi$ as the composition
$$
\xymatrix{
 F\ar[r]\ar[rd]_\xi&\langle \Hom(\quic(L),L)\,\timest\,\derge L\rangle^q\ar[d]^\simeq_{\langle\rho\rangle}  \\
	& \langle L\rangle}
$$
with $\rho$ as in (\ref{rho}).
But, by Lemma \ref{lafibra}, the restriction of $\xi$ to each path component of $F$,
$$
\xi\colon \langle  \Hom(\quic(L),L)\rangle^{\overline\varphi}\longrightarrow \langle L\rangle,\quad\varphi\in\aut_\calge,
$$
is precisely the realization of the projection,
$$
 \Hom(\quic(L),L)^{\overline\varphi}\longrightarrow L.
 $$
 However,  in view of Proposition \ref{escapa}, the realization of this cdgl morphism is homotopy equivalent to the evaluation map
$$
\ev\colon  \map_f(X_\bq,X_\bq)\longrightarrow X_\bq
$$
with $[f]\in G_\bq$ corresponding to the homotopy class of $\varphi$.  That is, we have a homotopy commutative diagram,
$$
\xymatrix{
 \aut_{G_\bq}(X_\bq)\ar[r]^(.60)\ev&X_\bq\\
F\ar[u]_\simeq\ar[r]^\xi&
\langle L\rangle.\ar[u]^\simeq\\}
$$
\end{proof}

\begin{rem}\label{remi} In the proof of Theorem \ref{mainre1} we will also need the following observation of  general nature: Let
$$
0\to \ker p\longrightarrow M\stackrel{p}{\longrightarrow} L\to 0
$$
be a fibration of connected cdgl's and
$$
\langle \ker p\rangle\longrightarrow\langle M\rangle\stackrel{\langle p\rangle}{\longrightarrow}\langle L\rangle$$
the realization fibration. Let $x\in L_0$, $y\in M_0$ with $p(y)=x$, and consider the automorphism
$
e^{\ad_y}\colon \ker p\stackrel{\cong}{\longrightarrow}\ker p$. Its realization
$$
\langle e^{\ad_y}\rangle\colon \langle \ker p\rangle \stackrel{\simeq}{\longrightarrow} \langle  \ker p\rangle$$
is precisely, up to homotopy, the homotopy equivalence of the fibre induced by the element $[x]\in\pi_1\langle L\rangle$. In view of the definition of the holonomy action for Kan fibrations (see for instance \cite[Corollary 7.11]{may2}), this is a simple exercise using just the definition of the realization functor and the identification of the elements $x$ and $y$ with the corresponding $1$-simplices of $\langle L\rangle$ and $\langle M\rangle$.

In other words, the natural map $\pi_1\langle L\rangle\to \cale\langle\ker p\rangle$ sends $[x]\in H_0(L)$ to the homotopy class of $\langle e^{\ad_y}\rangle$.
 \end{rem}

We are now able to complete the:

\begin{proof}[Proof of Theorem \ref{mainre1}]

We first check that
\begin{equation}\label{otrafibra}
L\stackrel{\ad}{\longrightarrow} \derge L \longrightarrow \derge L\timest sL,
\end{equation}
is a fibration sequence. For it, and
as in the classical, simply connected case \cite[\S VII.4(1)]{tan}, consider the twisted product of $L$ and $\derge L\timest sL$,
\begin{equation}\label{masfibra}
L\longrightarrow (L\timest \derge L\timest sL,\dtil)\longrightarrow \derge L\timest sL
\end{equation}
where, in the middle,
$$
[\theta,x]=\theta(x),\quad [sx,y]=0,\quad \dtil sx=-sdx-x+\ad_x,\quad \dtil\theta=D\theta,\quad x,y\in L,\quad\theta\in\derge L.
$$
In view of \S\ref{comple}, this is a cdgl fibration.

On the one hand, since the inclusion $i\colon \derge L\stackrel{\simeq}{\hookrightarrow} (L\timest \derge L\timest sL,\dtil)$ is a quasi-isomorphism, the diagram
\begin{equation}\label{hopull}
\xymatrix{(L\timest \derge L\timest sL,\dtil)\ar[rd]&\\
 \derge L\ar[u]_i^\simeq\ar[r] &  \derge L\timest sL}
\end{equation}
is a factorization of $\derge L\to \derge L\timest sL$ as a weak equivalence followed by a fibration.

On the other hand, the morphism
$$
\varrho\colon (L\timest \derge L\timest sL,\dtil)\stackrel{\simeq}{\longrightarrow}\derge L,\quad \varrho(x)=\ad_x,\,\,\varrho(\theta)=\theta,\,\,\varrho(sx)=0,\,\, x\in L,\,\, \theta\in\derge L,
$$
 is a left inverse of $i$ and thus, it is also a quasi-isomorphism which makes commutative this diagram,
$$
\xymatrix{&(L\timest \derge L\timest sL,\dtil)\ar[d]^\varrho_\simeq&\\
  L\ar[ru]\ar[r]_\ad &  \derge L.}
$$
These two facts exhibit (\ref{otrafibra}) and (\ref{masfibra}) as homotopy equivalent  fibration sequences.

Next we check that the realization of  (\ref{otrafibra}),
\begin{equation}\label{penul}
\langle L\rangle \stackrel{\langle \ad\rangle }{\longrightarrow}\langle \derge L \rangle\longrightarrow \langle \derge L\timest sL\rangle,
\end{equation}
is a fibration in $\fib_{G_\bq}(X_\bq, \langle \derge L\timest sL\rangle)$, see (\ref{clasefib}). For it,
 note that,
\begin{equation}\label{piuno}\pi_1\langle \derge L\timest sL\rangle=H_0(\derge L\timest sL)=H_0(\derge L)/\im H_0(\ad ).
\end{equation}
Hence, by Remark \ref{remi} and since $\exp(\derge L)=\aut_\calge (L)$, the image of the holonomy action $\pi_1\langle \derge L\timest sL\rangle\to\cale\langle L\rangle\cong\cale(X_\bq)$ is precisely $\calge/H_0(L)\cong G_\bq$ as required. Therefore, (\ref{penul})  is obtained as the pullback of the universal fibration over a certain map,
$$
\xymatrix{X_\bq\ar[r]&B\aut_{G_\bq}^*(X_\bq)\ar[r]&B\aut_{G_\bq}(X_\bq)\\
\langle L\rangle\ar[u]^\simeq\ar[r]_(.40){\langle\ad\rangle}&
\langle \derge L \rangle\ar[u]\ar[r]&
\langle \derge L \timest sL\rangle.\ar[u]\\}
$$
Finally, by Proposition \ref{propoimportante}, this diagram fits in a larger one
$$
\xymatrix{\aut_{G_\bq}(X_\bq)\ar[r]^(.59)\ev & X_\bq\ar[r]&B\aut_{G_\bq}^*(X_\bq)\ar[r]&B\aut_{G_\bq}(X_\bq)\\
F\ar[u]^\simeq\ar[r]_\xi&\langle L\rangle\ar[u]^\simeq\ar[r]_(.40){\langle\ad\rangle}&
\langle \derge L \rangle\ar[u]\ar[r]&
\langle \derge L \timest sL\rangle,\ar[u]\\}
$$
where, again, both arrows are fibration sequences. Thus, the last two vertical maps are homotopy equivalences, and the proof is complete.

\end{proof}

We now turn to the  proof of Theorem \ref{mainre2} for which the analogue of Lemma \ref{welldefined} reads:

\begin{lemma}\label{welldefined2} The maps
$$
L {\longrightarrow}L\timest \derpi L \longrightarrow \derpi L,
$$
constitute a well defined cdgl sequence.
\end{lemma}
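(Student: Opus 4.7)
The plan is to parallel the proof of Lemma \ref{welldefined}, with the essential difference that the nilpotent action of $\Pi\cong\pi_\bq$ is on the homology $H_*(L)$ (since $sH_*(L)\cong\pi_*(X_\bq)$) rather than on $sV\cong\widetilde H_*(X_\bq)$. The first step is to translate this homological nilpotence into a concrete filtration on the generators $V$. Writing $H_*(L)=W^0\supset W^1\supset\dots\supset W^q=0$ for the $\Pi$-central series on homology, I would produce a compatible filtration $V=V^0\supset V^1\supset\dots\supset V^q=0$ by an inductive lifting argument exploiting the minimality of $L$, with the property that any $\varphi\in\aut_\Pi(L)$ satisfies $(\varphi_*-\id_V)(V^i)\subset V^{i+1}$. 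By Proposition \ref{exponencial}, $\aut_\Pi(L)$ then embeds into $\exp(\der_0 L)$ for this filtration, identifying $\derpi_0 L$ as a subgroup of $\der_0 L$ with respect to the BCH product.

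Next I would invoke Theorem \ref{cerrado} to conclude that $\derpi_0 L$ is a complete sub Lie algebra of $\der_0 L$. The required hypothesis is that $\derpi_0 L$ be closed under multiplication by rational scalars with respect to the BCH product; this follows from the fact that $\Pi\cong\pi_\bq$, being the Malcev completion of a nilpotent group, is uniquely $\bq$-divisible, so for any $\theta\in\derpi_0 L$ and $\lambda\in\bq$ one has $[e^{\lambda\theta}]=[e^\theta]^\lambda\in\Pi$ and hence $\lambda\theta\in\derpi_0 L$. Completeness of $\derpi_0 L$ as a subgroup in the sense of Theorem \ref{cerrado} is automatic since $\Pi$ is nilpotent so its central series terminates, while $\der_0 L$ itself is complete by Proposition \ref{derkacomp}.

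The verification that $D(\derr_1 L)\subset\derpi_0 L$ is essentially a copy of the corresponding step in the proof of Lemma \ref{welldefined}: for $\theta\in\derr_1 L$, the derivation $\eta$ on $L\otimesc\land(t,dt)$ defined by $\eta(v)=t\theta(v)$ and vanishing on $L\otimes\land^+(t,dt)$ has $D\eta$ a cycle in $\derr_0(L\otimesc\land(t,dt))$, so $\Phi(x)=e^{D\eta}(x)$ provides a cdgl homotopy between $\id_L$ and $e^{D\theta}$, whence $[e^{D\theta}]=[\id_L]\in\Pi$. Hence $\derpi L$ is a sub cdgl of $\der L$, and completeness of the twisted product $L\timest\derpi L$ follows from Proposition \ref{excusa} (applied with $\derpi L$ in place of $\der L$). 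The sequence $L\hookrightarrow L\timest\derpi L\twoheadrightarrow\derpi L$ is then manifestly a short exact sequence of cdgl morphisms, since the bracket $[\theta,x]=\theta(x)$ stays within $L$.

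The main obstacle I anticipate is the very first step: lifting the $\Pi$-central filtration of $H_*(L)$ to a filtration of $V$ amenable to Proposition \ref{exponencial}. Unlike in the homological case of Lemma \ref{welldefined}, where $sV\cong\widetilde H_*(X_\bq)$ supplies the filtration on the nose, here the relationship between the generators $V$ and the homology $H_*(L)$ is indirect (mediated by the quadratic and higher parts of $d$), so the lifting requires a careful inductive argument exploiting minimality together with the characterization of $\Pi$ via its action on $H_*(L)$.
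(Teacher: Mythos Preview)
Your plan parallels the paper closely from the second step onward, but the first step---producing a filtration on $V$ from the $\Pi$-central series on $H_*(L)$---is both the right worry and the place where your approach diverges from the paper. Recall the identifications
\[
sV\cong \widetilde H_*(X_\bq)\quad\text{and}\quad sH_*(L)\cong\pi_*(X_\bq),
\]
so that $\varphi_*\colon V\to V$ records the action on the \emph{homology} of $X_\bq$, whereas the hypothesis on $\Pi$ is nilpotent action on the \emph{homotopy} of $X_\bq$. Your ``inductive lifting exploiting minimality'' therefore amounts, in disguise, to proving that a subgroup of $\cale^*(X_\bq)$ acting nilpotently on $\pi_*(X_\bq)$ also acts nilpotently on $H_*(X_\bq)$. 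That is a genuine theorem, not a formality, and your sketch does not supply the mechanism.

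The paper sidesteps this entirely by citing Hilton's result \cite[Theorem~2.1]{hil}: if a group acts nilpotently on $\pi_*(X)$ then it acts nilpotently on $H_*(X)$. Once this is invoked, $\Pi$ acts nilpotently on $V$ directly, the central series (\ref{final}) furnishes the required filtration $V=V^0\supset\dots\supset V^q=0$, and one is back in the exact situation of Lemma~\ref{welldefined}: Proposition~\ref{exponencial}, Theorem~\ref{cerrado}, the homotopy $e^{D\eta}$ for $D(\derr_1 L)\subset\derpi_0 L$, and Propositions~\ref{derkacomp} and~\ref{excusa} apply verbatim. Everything you wrote after the first paragraph is then correct and matches the paper. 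The missing ingredient is just the citation to Hilton.
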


\begin{proof} The same argument in the proof of Lemma \ref{welldefined} shows that $\derpi L$ is a subdgl of  $\derk L$. Therefore, by Propositions \ref{derkacomp} and \ref{excusa} respectively, $\derpi L$ and $ L\timest\derpi L$ are well defined cdgl's.
\end{proof}

\begin{proof}[Proof of Theorem \ref{mainre2}] To avoid excessive notation we denote by $\aut_{\pi_\bq}(X_\bq)$ the monoid $\aut_{\zeta(\pi_\bq)}(X_\bq)$ with $\zeta$ as in (\ref{surfun}). That is,
$$
\aut_{\pi_\bq}(X)=\{f\in\aut(X_\bq),\,\,[f]\in\zeta(\pi_\bq)\,\}.
$$
In view of Definitions \ref{gyg} and \ref{pyp}  note that,  since $\pi_\bq$ is preserved by the action of $\pi_1(X_\bq)$,
$$
\aut_{\zeta(\pi_\bq)}^*(X_\bq)=\aut_{\pi_\bq}^*(X_\bq).
$$
On the other hand, invoking again \cite[Theorem 2.1]{hil}, $\pi_\bq$ and therefore $\zeta(\pi_\bq)$ act nilpotently on $H_*(X_\bq)$.
Hence, we may apply Theorem \ref{mainre1} to obtain a homotopy commutative diagram,
$$
\xymatrix{B\aut_{\pi_\bq}^*(X_\bq)\ar[r]^p&B\aut_{\pi_\bq}(X_\bq)\\
\langle \derpi L \rangle\ar[u]^\simeq\ar[r]&
\langle \derpi L \timest sL\rangle\ar[u]_\simeq\\}
$$
where $p$ is the classifying fibration.
On the other hand, combining \cite[Proposition 7.8]{may} and \cite[Remark 1.2]{fuen}, we see that the classifying fibration $ Z_\bq\to B\aut_{\pi_\bq}^*(X_\bq)$ in Theorem \ref{mainre2} sits in a homotopy pullback,
$$
\xymatrix{B\aut_{\pi_\bq}^*(X_\bq)\ar[r]&B\aut_{\pi_\bq}(X_\bq)\\
Z_\bq\ar[u]\ar[r]&
B\aut_{\pi_\bq}^*(X_\bq).\ar[u]\\}
$$
Thus, since the realization functor preserves homotopy limits, the above square is homotopy equivalent to the realization of the cdgl homotopy pullback of the morphisms
$$
\derpi L\longrightarrow \derpi L \timest sL\longleftarrow \derpi L
$$
To compute this homotopy pullback factor any of the above morphisms as in (\ref{hopull}) and then take the pullback of the resulting fibration $(L\timest \derpi L\timest sL,\dtil)\to \derpi L\timest sL$ and $\derpi L\longrightarrow \derpi L \timest sL$,
$$
\xymatrix{
(L\timest \derpi L\timest sL,\dtil)\ar[r]&\derpi L \timest sL\\
L\timest\derpi L\ar[u]\ar[r]&
\derpi L.\ar[u]\\
}
$$
To conclude note that the realization of $L$, the fiber of the bottom morphism, must be homotopy equivalent a $X_\bq$, the homotopy fiber of the classifying fibration $Z\to
B\aut_{\pi_\bq}^*(X_\bq)$.

\end{proof}
\section{Some consequences and examples}\label{apli}

In this section, and unless explicitly stated otherwise, we adopt the notation in \S\ref{princi} and the assumptions in  Theorems \ref{mainre1} and \ref{mainre2}.

The first immediate consequence, which is Corollary \ref{corounointro}, is a description  of the rationalization of $\pi$ and $G$. In what follows and as usual, $H_0$ is considered as a group with the BCH product.

\begin{theorem}\label{apli1} $\pi_\bq\cong H_0(\derpi L)$ and $G_\bq\cong H_0(\derge L)/\im H_0(\ad)$.
\end{theorem}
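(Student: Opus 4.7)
The strategy is to extract both isomorphisms by applying $\pi_1$ to the realizations of the cdgl fibration sequences supplied by Theorems \ref{mainre1} and \ref{mainre2}, and then identifying $\pi_1$ of the realization of a connected cdgl $M$ with $H_0(M)$ (with BCH product) via the isomorphism $\rho_1$ recalled in \S\ref{prelimi}.

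For the first identity, Theorem \ref{mainre2} gives a homotopy equivalence between the realization of $L\to L\timest \derpi L\to \derpi L$ and the classifying fibration sequence $X_\bq\to Z_\bq\to B\aut^*_{\pi_\bq}(X_\bq)$. Taking $\pi_1$ of the respective bases yields on the one hand $\pi_1 B\aut^*_{\pi_\bq}(X_\bq)\cong\pi_0\aut^*_{\pi_\bq}(X_\bq)=\pi_\bq$, and on the other hand, since $\derpi L$ is connected, $\pi_1\langle\derpi L\rangle\cong H_0(\derpi L)$ by \cite[Theorem 7.18]{bfmt0}. Composing these identifications yields $\pi_\bq\cong H_0(\derpi L)$.

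For the second identity, Theorem \ref{mainre1} exhibits the base map $\langle\derge L\rangle\to\langle\derge L\timest sL\rangle$ as homotopy equivalent to $B\aut^*_{G_\bq}(X_\bq)\to B\aut_{G_\bq}(X_\bq)$. Taking $\pi_1$ of the target gives $\pi_1 B\aut_{G_\bq}(X_\bq)\cong G_\bq$. On the algebraic side, the twisted cdgl $\derge L\timest sL$ is connected, and by the computation recorded in (\ref{piuno}) during the proof of Theorem \ref{mainre1} one has
$$
\pi_1\langle\derge L\timest sL\rangle\cong H_0(\derge L\timest sL)\cong H_0(\derge L)/\im H_0(\ad),
$$
where the last identification uses that the short exact sequence $L\stackrel{\ad}{\to}\derge L\to\derge L\timest sL$ of cdgl's produces in degree $0$ the quotient $H_0(\derge L)/\im H_0(\ad)$ (the group structure being transported from BCH on $H_0(\derge L)$). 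Combining these two identifications delivers the desired isomorphism $G_\bq\cong H_0(\derge L)/\im H_0(\ad)$.

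The only point requiring any care is bookkeeping the group structures: one must make sure that the abstract isomorphism $\rho_1$ transports the topological group structure on $\pi_1$ to the BCH product on $H_0$, and that the induced quotient on the fibration side matches the image of $H_0(\ad)$ rather than some a priori larger subgroup. Both facts have already been set up in \S\ref{prelimi} and in (\ref{piuno}) respectively, so the argument reduces to assembling these pieces.
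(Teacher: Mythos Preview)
Your proposal is correct and follows essentially the same route as the paper's own proof: apply $\pi_1$ to the bases of the classifying fibrations in Theorems \ref{mainre1} and \ref{mainre2}, use $\pi_1\langle M\rangle\cong H_0(M)$ for connected $M$, and for the free case invoke the identification (\ref{piuno}). The paper's argument is just a terser version of yours.
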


\begin{proof}
Trivially, $\pi_\bq\cong\pi_1 B\aut_{\pi_\bq}(X_\bq)\cong \pi_1\langle\derpi L\rangle\cong  H_0(\derpi L)$. The second assertion is also immediate in view of (\ref{piuno}).
\end{proof}

\begin{rem} Observe that \cite[Proposition 12]{sal}, is the particular instance of Theorem \ref{apli1} for the  distinguished subgroups $\cale_H(X_\bq)\subset\cale(X_\bq)$ and $\cale_\pi(X_\bq)\subset\cale^*(X_\bq)$  of those classes that induce the identity on $H_*(X_\bq)$ and $\pi_*(X_\bq)$ respectively. In this case $\derge_0 L$ consists of derivations $\theta$ commuting with the differential and such that $e^\theta$ is an automorphism of $L=(\libc(V),d)$ inducing the identity on $V$. That is, $\theta(V)\in \libc^{\ge2}(V)$.
Analogously, $\derpi_0 L$ are those  derivations $\theta$ commuting with the differential and such that $e^\theta$ induces the identity on $H_*(L)$.
\end{rem}

Another immediate application concerns the homotopy nilpotency of $\aut^*_{\pi_\bq}(X_\bq)$ and $\aut_{G_\bq}(X_\bq)$. Recall that given  an $H$-group $Y$, its nilpotency $\nil Y$, is the least integer $n\le \infty$ for which the $(n+1)$th homotopy commutator of $Y$ is homotopically trivial.  On the other hand, for any dgl $L$ we denote by $\nil L$ the usual nilpotency index. Then:

\begin{proposition} $\nil \aut_{\pi_\bq}^*(X_\bq)=\nil H(\derpi L)$ and $\nil \aut_{G_ \bq}(X_\bq)=\nil H(\derge L\timest sL)$.
\end{proposition}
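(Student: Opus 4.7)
The plan is to reduce the statement to a calculation in the homotopy Lie algebra of a realized cdgl via Theorems \ref{mainre1} and \ref{mainre2}. Since $\aut^*(X_\bq)$ and $\aut(X_\bq)$ are grouplike topological monoids (every self-equivalence has a homotopy inverse), each submonoid $\aut_{\pi_\bq}^*(X_\bq)$ and $\aut_{G_\bq}(X_\bq)$ is an H-group, and so in the homotopy category
$$
\aut_{\pi_\bq}^*(X_\bq) \simeq \Omega B\aut_{\pi_\bq}^*(X_\bq), \qquad \aut_{G_\bq}(X_\bq) \simeq \Omega B\aut_{G_\bq}(X_\bq)
$$
as H-groups. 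By Theorems \ref{mainre2} and \ref{mainre1}, the classifying spaces appearing on the right are, respectively, $\langle \derpi L\rangle$ and $\langle \derge L\timest sL\rangle$, so we obtain H-group equivalences $\aut_{\pi_\bq}^*(X_\bq)\simeq\Omega\langle\derpi L\rangle$ and $\aut_{G_\bq}(X_\bq)\simeq\Omega\langle\derge L\timest sL\rangle$.

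Next, I would translate the H-group structure of $\Omega\langle M\rangle$ into algebra. For any connected cdgl $M$, the isomorphism $\rho_n\colon \pi_n\langle M\rangle\stackrel{\cong}{\to} H_{n-1}(M)$ from \cite[Theorem 7.18]{bfmt0} provides a graded isomorphism $\pi_*\Omega\langle M\rangle \cong H_*(M)$, and this isomorphism identifies the Samelson bracket on the left with the graded Lie bracket of $H_*(M)$, while the $\pi_0$-group structure $\pi_0\Omega\langle M\rangle\cong H_0(M)$ is matched with the Baker--Campbell--Hausdorff product. Applied to $M=\derpi L$ (respectively $M=\derge L\timest sL$) this identifies the homotopy Lie algebra of $\aut_{\pi_\bq}^*(X_\bq)$ with $H(\derpi L)$ (respectively that of $\aut_{G_\bq}(X_\bq)$ with $H(\derge L\timest sL)$), as graded Lie algebras with their degree-zero BCH group.

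Finally, I would invoke the classical principle, already exploited in \cite{sal}, that for a rational H-group $Y$ the H-nilpotency coincides with the Lie nilpotency of $\pi_*(Y)$ (Samelson bracket in positive degrees, BCH in degree zero): iterated H-commutators become, in rational homotopy, iterated Lie brackets of the corresponding Samelson classes, so the vanishing of the $(n{+}1)$st H-commutator is equivalent to $L^{n+1}=0$ in the descending central series of the homotopy Lie algebra. Combining the three steps yields
$$
\nil \aut_{\pi_\bq}^*(X_\bq) = \nil H(\derpi L), \qquad \nil \aut_{G_\bq}(X_\bq) = \nil H(\derge L\timest sL).
$$

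The main obstacle is the delicate compatibility in degree zero: in the simply connected setting the identification of Samelson bracket with Lie bracket is standard, but here the monoids are not connected, so one must confirm that the H-group nilpotency of the whole monoid captures both the higher-degree Samelson nilpotency and the BCH nilpotency of $\pi_0$, and that this combined integer matches $\nil H(M)$ for a cdgl concentrated in degrees $\geq 0$ with its degree-zero part viewed as a complete Lie algebra under BCH. A second, milder technical point is that $\derge L\timest sL$ and $\derpi L$ must be used in the sense of the completed twisted products of Section \ref{comple}, so that the BCH product on their $H_0$ is well defined and matches the one on $G_\bq$ and $\pi_\bq$ coming from Theorem \ref{apli1}.
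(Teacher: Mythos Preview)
Your proposal is correct and follows essentially the same route as the paper: both identify $B\aut_{\pi_\bq}^*(X_\bq)$ and $B\aut_{G_\bq}(X_\bq)$ with $\langle\derpi L\rangle$ and $\langle\derge L\timest sL\rangle$ via Theorems \ref{mainre1} and \ref{mainre2}, and both appeal to \cite{sal} together with the Lie-algebra isomorphism $\pi_{*+1}\langle M\rangle\cong H_*(M)$ from \cite[\S12.5.2]{bfmt0}. The only cosmetic difference is that the paper phrases the key step via Whitehead product length of $\pi_*B$, whereas you phrase it via Samelson brackets on $\pi_*\Omega B$; these are of course equivalent, and the paper simply cites \cite[Theorem 3]{sal} for the identification of H-nilpotency with bracket-length, without lingering on the degree-zero subtleties you raise.
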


\begin{proof} By \cite[Theorem 3]{sal}, $\nil \aut_{\pi_\bq}^*(X_\bq)$ and $\nil \aut_{G_ \bq}(X_\bq)$ coincide, respectively, with the iterated Whitehead product length of $\pi_* B\aut_{\pi_\bq}^*(X_\bq)$ and $\pi_* B\aut_{G_ \bq}(X_\bq)$. Finally, apply  Theorems \ref{mainre1} and \ref{mainre2} taking into account that, as Lie algebras, $H_*(M)\cong\pi_{*+1}\langle M\rangle$ for any connected cdgl $M$ \cite[\S12.5.2]{bfmt0}.
\end{proof}

We now consider, again for $X$ nilpotent,  the monoid $\aut_1(X)$  of   self equivalences  homotopic to $\id_X$, which corresponds to  choosing $G\subset\cale(X)$  the trivial group. Note (see Definition \ref{gyg} and Remark \ref{explico}) that the submonoid $\aut_1^*(X)\subset\aut_1(X)$ is not connected in general. In this case,  Theorem  \ref{mainre1} asserts that the rational homotopy type of the classifying fibration
\begin{equation}\label{simplycon}
X\longrightarrow B\aut_1^*(X)\longrightarrow B\aut_1(X)
\end{equation}
is
 modeled by the cdgl fibration sequence
$$
L\stackrel{\ad}{\longrightarrow} \deri L\longrightarrow\deri L\timest sL,
$$
where $\cali\subset\aut L/\sim$ is the subgroup of homotopy classes of automorphisms for which the orbit group is trivial: $\cali/H_0(L)=\{1\}$. In this particular instance:

\begin{proposition}\label{aut1} \begin{itemize}
\item[(i)] $\deri L=\derr_{\ge 1}L\oplus R_0$ where $R_0=D(\derr_1 L)+\ad L_0$.
\item[(ii)] $\deri L\timest sL\simeq \widetilde{\derr L \timest sL}$ where, as usual, this is the $1$-connected cover of $\derr\timest sL$.
    \end{itemize}

\end{proposition}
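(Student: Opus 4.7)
The plan is to derive part (i) as a direct consequence of Theorem \ref{apli1} applied to the trivial subgroup $G=\{1\}$, and then deduce part (ii) from (i) by comparing $1$-connected covers.

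For part (i), I first note that since $L$ is connected we have $L_{-1}=0$, so every $x\in L_0$ is a $d$-cycle and $D(\ad_x)=\ad_{dx}=0$; this already places $R_0\subset \ker D\cap \derr_0 L$. To establish $R_0\subseteq \deri_0 L$ I check the two types of generators and then invoke closure: the argument in the proof of Lemma \ref{welldefined} produces an explicit homotopy $e^{D\eta}\sim \id_L$ for each $\eta\in\derr_1 L$, so $D\eta\in\deri_0 L$; and Definition \ref{granaccion} together with the fact that $\cali$ is the orbit of $[\id_L]$ under the $H_0(L)$-action gives $[e^{\ad_x}]=[x]\bulletchi[\id_L]\in\cali$, so $\ad_x\in\deri_0 L$. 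Since $\deri_0 L$ is a complete subgroup of $\derr_0 L$ under the BCH product, Theorem \ref{cerrado} promotes it to a complete Lie subalgebra, in particular a vector subspace; therefore $R_0\subseteq\deri_0 L$.

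For the reverse inclusion I apply Theorem \ref{apli1} with $G=\{1\}$, in which case $\calge=\cali$ and $G_\bq=1$. Thus $H_0(\deri L)/\im H_0(\ad)=1$. Computing the terms of this quotient with $\deri_{\ge 1}L=\derr_{\ge 1}L$ (cf.\ Definition \ref{derk}), I obtain $H_0(\deri L)=\deri_0 L/D(\derr_1 L)$ and $\im H_0(\ad)=(\ad L_0+D\derr_1 L)/D\derr_1 L$, since $L_{-1}=0$ makes every $x\in L_0$ an $H_0(L)$-representative. The vanishing of the quotient then reads $\deri_0 L = D(\derr_1 L)+\ad L_0 = R_0$, which combined with $\deri_{\ge 1}L=\derr_{\ge 1}L$ yields the decomposition asserted in (i).

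For part (ii), the key observation is that $\widetilde{\deri L\timest sL}$ and $\widetilde{\derr L\timest sL}$ coincide as sub cdgl's of $\derr L\timest sL$. In degrees $p\ge 2$ both equal $\derr_pL\oplus sL_{p-1}$; in degree $1$ both ambient cdgl's have the same piece $\derr_1L\oplus sL_0$ and the same differential formula $D(\eta,sx)=(D\eta+\ad_x,-sdx)$, so their $1$-cycles agree. Hence $\widetilde{\deri L\timest sL}=\widetilde{\derr L\timest sL}$. Part (i) now gives $H_0(\deri L\timest sL)=\deri_0 L/(D\derr_1 L+\ad L_0)=R_0/R_0=0$, and for $p\ge 1$ the homologies of a cdgl and of its $1$-connected cover coincide. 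Therefore the inclusion $\widetilde{\deri L\timest sL}\hookrightarrow \deri L\timest sL$ is a quasi-isomorphism of connected cdgl's, hence a weak equivalence, which produces $\deri L\timest sL\simeq \widetilde{\derr L\timest sL}$.

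The only real obstacle is the reverse inclusion in (i): extracting from the bare definition of $\deri_0 L$ (as cycles whose exponential is homotopic to an $e^{\ad_x}$) the precise conclusion that $\theta$ must lie in $D(\derr_1 L)+\ad L_0$ seems hard directly, because the homotopy relation between two automorphisms is defined via the path object $L\otimesc\land(t,dt)$ and not easily inverted; the global homotopy-theoretic input of Theorem \ref{mainre1}, mediated through Theorem \ref{apli1}, is what supplies this.
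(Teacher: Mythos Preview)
Your proof is correct and follows essentially the same approach as the paper. For (i) the paper simply invokes Theorem \ref{apli1} and writes ``this translates to $\deri_0 L=R_0$''; you unpack that translation explicitly and also verify the easy inclusion $R_0\subseteq\deri_0 L$ (already implicit in the proof of Lemma \ref{welldefined}). For (ii) the paper says ``in view of (i) this is straightforward'' and offers an alternative geometric argument via the simple connectivity of $B\aut_1(X)$; your direct computation that $H_0(\deri L\timest sL)=0$ and that the $1$-connected covers of $\deri L\timest sL$ and $\derr L\timest sL$ coincide is precisely that straightforward route.
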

    Note that
$
\bigl(\widetilde{\derr L \timest sL}\bigr)_{\ge 2}=(\derr L\timest sL)_{\ge 2}$ and  $$\bigl(\widetilde{\derr L \timest sL}\bigr)_{1}=\{(\theta,sx)\in \derr_1 L\times sL_0,\,\,D\theta=-\ad_x\,\}.
$$
\begin{proof} (i) By Theorem \ref{apli1}, $H_0(\deri L)/\im H_0(\ad)=0$, i.e. $H_0(\deri L)=\im H_0(\ad)$. That is, for any $x\in \deri_0 L$,  $[x]= [\ad y]$ for some $y\in L_0$. In other words $x=d\phi +\ad y$ for some $\phi\in \derr_1 L$. This translates to $\deri_0 L=R_0$.

(ii) In view of (i) this is straightforward. One can also argue that, as $B\aut_1(X)$ is simply connected, the realization of $\deri L\timest sL$ is of the homotopy type of its universal cover. To finish, recall from \S\ref{prelimi} that taking $1$-connected covers commutes with realization and note that the $1$-connected cover of $\deri L\timest sL$ is precisely $\widetilde{\derr L \timest sL}$.
\end{proof}

Interesting consequences of this results are the following in which $\aut_1(L)$ denote the group of automorphisms of $L$ homotopic to the identity.

\begin{corollary}\label{corfin1} For any connected, minimal cdgl $L$ the exponential restrict to a group isomorphism
$$
\exp\colon D(\derr_1 L)\stackrel{\cong}{\longrightarrow} \aut_1 (L).
$$
\end{corollary}
\begin{proof} The exponential $\exp\colon \deri_0 L\stackrel{\cong}{\to} \aut_{\mathcal I}(L)$ induces an isomorphism
$$
\exp\colon \deri_0 L/\ad L_0\stackrel{\cong}{\longrightarrow}\aut_{\mathcal I}(L)/e^{\ad L_0}.
$$
However, by (i) of Proposition \ref{aut1}, $\deri_0 L/\ad L_0=D(\derr_1 L)$. On the other hand,
via the action of $H_0(L)$ on $\deri_0 L$ (see Definition \ref{granaccion}), $\aut_{\mathcal I}(L)/e^{\ad L_0}$ is precisely $\aut_1(L)$.
\end{proof}

From this, we trivially see that the logarithm take homotopic automorphisms to homologus derivations:

\begin{corollary}\label{corfin2} Let $G\subset\aut(L)$ be a complete subgroup. Two automorphisms $f,g\in G$ are homotopic if and only if $\log(f)*\bigl(-\log(g)\bigr)=D\eta$ with $\eta\in \derr_1 L$.
\end{corollary}

\begin{proof} $f\sim g$ if and only if $fg^{-1}\sim\id_L$. By Corollary \ref{corfin1} this amounts to say that $\log(fg^{-1})=\log(f)*\bigl(-\log(g)\bigr)$ is in $D(\derr_1 L)$.
\end{proof}

\begin{rem} Observe that, when  $X$ is assumed to be simply connected, we recover from Proposition \ref{aut1} the classical result with which we started Section \ref{princi}. Indeed, in this case, $\aut_1^*(X)$ is also connected and thus (\ref{simplycon}) is a fibration sequence of simply connected spaces. On the other hand, $L$ is just the classical $1$-connected Quillen minimal model of $X$. Hence, applying proposition \ref{aut1} we deduce that,
$$
\deri L=\widetilde{\derr L},\qquad \deri L\timest sL=\widetilde{\derr L}\timest sL,
$$
and thus, (\ref{simplycon}) is modeled by
$$  L\stackrel{\ad}{\longrightarrow}\widetilde\derr L\longrightarrow \widetilde{\derr L} \timest sL.
$$
\end{rem}

Next, recall that given $G\subset\cale(X)$ and $\pi\subset\cale^*(X)$ there are   fibrations
\begin{equation}\label{pos2}
\aut_1(X)\longrightarrow \aut_G(X)\longrightarrow G\esp{and} \aut_1^*(X)\longrightarrow \aut_\pi^*(X)\longrightarrow \pi
\end{equation}
which extend to
\begin{equation}\label{pos}
B\aut_1(X)\longrightarrow B\aut_G(X)\longrightarrow BG\esp{and} B\aut_1^*(X)\longrightarrow B\aut_\pi^*(X)\longrightarrow B\pi.
\end{equation}
Here, according to  Definition \ref{pyp} and Remark \ref{explico}, $\aut_1^*(X)$ is the connected monoid of pointed maps based homotopic to $\id_X$.

Then, under the conditions of Theorems \ref{mainre1} and \ref{mainre2}, we obtain the following, which in particular proves Corollary \ref{corointrodos}:

\begin{theorem}\label{propopos} The fibration sequences
$$
B\aut_1(X_\bq)\to B\aut_{G_\bq}(X_\bq)\to BG_\bq\esp{and} B\aut_1^*(X_\bq)\to B\aut_{\pi_\bq}^*(X_\bq)\to B\pi_\bq
$$
have, respectively, the homotopy type of the realization of the cdgl fibrations,
$$
\widetilde{\derr L\timest sL}\hookrightarrow\derge L\timest sL\to (\derge L\timest sL) / (\widetilde{\derr L\timest sL})$$
and
$$ \widetilde\derr L\hookrightarrow\derpi L\to \derpi L/\widetilde\derr L.
$$
\end{theorem}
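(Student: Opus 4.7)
The plan is to exhibit each of the two sequences as a short exact sequence of cdgls whose realization, via Theorems \ref{mainre1} and \ref{mainre2}, matches the topological fibration. I will focus on the free case, the pointed case being entirely analogous.

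First I would equip $\derge_0 L\oplus R_1$ with a cdgl structure concentrated in degrees $0$ and $1$: the differential is the restriction $D\colon R_1\to\derge_0 L$, which is injective because $R_1$ complements the space $Z_1$ of $1$-cycles of $\derge L\timest sL$; the Lie bracket is inherited from $\derge L\timest sL$, with brackets of two degree-$1$ elements (a priori in degree $2$) set to zero. A direct verification shows that the projection
$$
\pi\colon \derge L\timest sL\longrightarrow \derge_0 L\oplus R_1,
$$
acting as the identity in degree $0$, as the projection $Z_1\oplus R_1\to R_1$ in degree $1$, and as zero in higher degrees, is a cdgl morphism whose kernel agrees degreewise with the $1$-connected cover $\widetilde{\derge L\timest sL}=\widetilde{\derr L\timest sL}$; the latter equality holds since $\derge_n L=\derr_n L$ for $n\ge 1$.

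Next I would verify that $\langle\derge_0 L\oplus R_1\rangle\simeq BG_\bq$. Using Theorem \ref{apli1},
$$
H_0(\derge_0 L\oplus R_1)=\derge_0 L/D(R_1)=H_0(\derge L)/\im H_0(\ad)\cong G_\bq,
$$
while $H_1=\ker D|_{R_1}=0$ and all higher homology vanishes; since there are no degree $-1$ elements, $\widetilde{\mc}$ is trivial. Hence $\langle\derge_0 L\oplus R_1\rangle$ is a $K(G_\bq,1)$. Realizing the short exact sequence of cdgls then yields a fibration whose fiber, by the compatibility of realization with connected covers (see \S\ref{prelimi}), is the universal cover of $\langle\derge L\timest sL\rangle\simeq B\aut_{G_\bq}(X_\bq)$, namely $B\aut_1(X_\bq)$.

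To conclude, I would match this fibration with the classifying one. Since the target is a $K(G_\bq,1)$, any map into it from $B\aut_{G_\bq}(X_\bq)$ is determined up to homotopy by its induced homomorphism on $\pi_1$; both the classifying map and $\langle\pi\rangle$ induce the identity on $G_\bq$ (the latter because $\pi$ is the identity on $\derge_0 L$), so the two fibration sequences agree up to homotopy. The pointed case follows the same recipe with $\derpi L$ in place of $\derge L\timest sL$ and $S_1\subset\derr_1 L$ a complement of the $1$-cycles of $\derpi L$; Theorem \ref{apli1} then yields $H_0(\derpi_0 L\oplus S_1)\cong\pi_\bq$ directly. The main obstacle I expect lies in the first step: although the truncation of the bracket above degree $1$ is forced and trivially preserves Jacobi, one must verify that the truncated bracket and differential really define a \emph{complete} dgl and that $\pi$ respects the compatible filtrations introduced in \S\ref{comple}. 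This is largely a bookkeeping exercise relying on Propositions \ref{derkacomp}, \ref{twistcomp} and \ref{excusa}, but requires care to confirm that no unintended terms survive in the projected bracket.
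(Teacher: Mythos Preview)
Your proposal is correct but takes a more hands-on route than the paper. The paper's proof is essentially two lines: it observes that the fibration sequences in question are precisely the fibrations of $B\aut_{G_\bq}(X_\bq)$ and $B\aut_{\pi_\bq}^*(X_\bq)$ over their first Postnikov stages, and then invokes the general result \cite[Proposition 12.43]{bfmt0} that for any connected cdgl $M$ and any $n\ge 0$, the realization of
$$
M_{>n}\oplus Z_n\hookrightarrow M\longrightarrow M/(M_{>n}\oplus Z_n)
$$
has the homotopy type of the fibration of $\langle M\rangle$ over its $n$th Postnikov stage. Taking $n=1$ and $M=\derge L\timest sL$ (resp.\ $\derpi L$) gives the result immediately. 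Your approach instead re-derives this Postnikov statement in the special case $n=1$: you build the quotient by hand, compute its homology directly to recognize it as a $K(G_\bq,1)$, identify the kernel as the $1$-connected cover (hence its realization as the universal cover), and then match the two fibrations via the uniqueness of maps into a $K(\pi,1)$. This is valid---it is essentially a proof of \cite[Proposition 12.43]{bfmt0} for $n=1$---but it costs more bookkeeping, notably the filtration/completeness checks you flag and the implicit step $\derge_0 L/D(R_1)=H_0(\derge L\timest sL)$ before Theorem~\ref{apli1} can be applied. The paper's route buys brevity by outsourcing all of this to a single citation; yours buys self-containment and makes the Postnikov mechanism visible.
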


\begin{rem} A short computation let us observe that
$$
(\derge L\timest sL) / (\widetilde{\derr L\timest sL})= \derge_0 L\oplus R_1\quad\text{and }\quad \derpi L/\widetilde\derr L=\derpi_0 L\oplus S_1
$$
where $R_1$ and $S_1$ denote a complement of the cycles in degree $1$ of $\derge L\timest sL$ and $\derpi L$ respectively.
\end{rem}

\begin{proof} Note that the fibrations sequences in (\ref{pos}) can also be obtained  by fibring $B\aut_G(X)$ and $B\aut_\pi^*(X)$ over their first Postnikov stage. On the other hand, given $M$ a connected cdgl  and $n\ge 0$, consider the cdgl fibration
$$
M_{> n}\oplus Z_n\longrightarrow M\longrightarrow M/(M_{> n}\oplus Z_n)
$$
where $Z_n\subset M_n$ is the subspace of cycles. Then \cite[Proposition 12.43]{bfmt0}, the realization of this sequence is of the homotopy type of the fibration of $\langle M\rangle$ over its $n$th Postnikov stage. The result follows from choosing $n=1$ and  $M$  either $\derge L\timest sL$ or $\derpi L$. Indeed, for $M=\derge L\timest sL$, it follows that $M_{>1}\oplus Z_1=\widetilde{\derr L\timest sL}$. In the same way, for $M=\derpi L$, one checks that $M_{>1}\oplus Z_1=\widetilde\derr L$.
\end{proof}
The fibrations connecting (\ref{pos2}) and (\ref{pos}),
$$
\aut_G(X)\longrightarrow G\longrightarrow B\aut_1 (X)\esp{and} \aut_\pi^*(X)\longrightarrow \pi\longrightarrow B\aut_1^*(X),
$$
 can also be modeled.
For it, consider first the dgl twisted product,
$$
\Hom(\quic(L),L)\longrightarrow \Hom(\quic(L),L)\timest\derr L\timest sL\longrightarrow \derr L\timest sL,
$$
defined as follows: on the one hand,
the structure in $\Hom(\quic(L),L)\timest\derr L$ is the usual, i.e., that of (\ref{clave}). On the other hand, considering the isomorphism $\Hom(\quic(L),L)\cong \Hom(\overline\quic(L),L)\timest L$ in (\ref{iso}), define
$$
Dsx=x-sdx+\ad_x, \quad [sx,f]=0,\quad [\theta,sx]=(-1)^{|\theta}s\theta(x),\quad  [sx,y]=\frac{1}{2}s[x,y],
$$
 for $x,y\in L$, $f\in \Hom(\overline\quic(L),L)$, and $\theta\in\derr L$. Note that $s[x,y]=[sx,y]+(-1)^{|x|}[x,sy]$ which facilitates checking  that $D$ respects the bracket. As warned in \S\ref{comple} this may not be a cdgl sequence but the restriction to
 $$
\Hom(\quic(L),L)\longrightarrow \Hom(\quic(L),L)\timest\widetilde{\derr L\timest sL}\longrightarrow \widetilde{\derr L\timest sL}
$$
is so, as the left hand side is $1$-connected and thus, complete. Therefore, its  realization
$$
\langle \Hom(\quic(L),L)\rangle \longrightarrow \langle \Hom(\quic(L),L)\timest\widetilde{\derr L\timest sL}\rangle\longrightarrow \langle \widetilde{\derr L\timest sL}\rangle
$$ is a fibration and thus, by restricting the total space to the following particular set of path components we get a fibration,
\begin{equation}\label{recubrecu1}
F\longrightarrow {{\textstyle\coprod}}_{[\varphi]\in\calge/H_0(L)}\,\,\langle \Hom(\quic(L),L)\timest\widetilde{\derr L\timest sL}\rangle^{\overline\varphi} \longrightarrow \langle \widetilde{\derr L\timest sL}\rangle.
\end{equation}

Then we prove (cf. \cite[Theorem 1.1]{ber2}):

\begin{proposition}\label{recubre1} The  fibration (\ref{recubrecu1}) has the rational homotopy type of
$$
\aut_G(X)\longrightarrow G\longrightarrow B\aut_1 (X).
$$
\end{proposition}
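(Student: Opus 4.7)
The approach closely mimics the proof of Theorem \ref{mainre1}: exhibit the stated fibration (\ref{recubrecu1}) as the realization (suitably restricted to components of the total space) of the short exact cdgl sequence
$$
0 \to \Hom(\quic(L),L) \to \Hom(\quic(L),L) \timest \widetilde{\derr L \timest sL} \to \widetilde{\derr L \timest sL} \to 0,
$$
and then identify fiber, total space, and base separately. First, I would check that this is a well-defined fibration of cdgl's. Since $\widetilde{\derr L \timest sL}$ is $1$-connected, completeness of the twisted product follows from arguments analogous to those in Section \ref{comple}, so the realization is genuinely a fibration.

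Next, I would identify the base. By Proposition \ref{aut1}(ii) we have $\deri L \timest sL \simeq \widetilde{\derr L \timest sL}$, and by Theorem \ref{mainre1} applied with the trivial group $\{1\} \subset \cale(X)$ this cdgl models $B\aut_1(X_\bq)$. Then I identify the fiber $F$ by running the argument of Lemma \ref{lafibra} verbatim: since $\widetilde{\derr L \timest sL}$ has no elements in degree $0$, gauge equivalence of MC elements of $\Hom(\quic(L),L) \timest \widetilde{\derr L \timest sL}$ within the chosen components reduces to gauge equivalence already within $\Hom(\quic(L),L)$. Hence $F = \coprod_{[\varphi] \in \calge/H_0(L)} \langle \Hom(\quic(L),L)\rangle^{\overline\varphi}$, which by Corollary \ref{cor1}, Proposition \ref{escapa} and Remark \ref{remark} is homotopy equivalent to $\aut_{G_\bq}(X_\bq)$.

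The main obstacle is to show that each component $\langle \Hom(\quic(L),L) \timest \widetilde{\derr L \timest sL}\rangle^{\overline\varphi}$ of the total space is \emph{contractible}, so that after taking the disjoint union over $\calge/H_0(L) \cong G_\bq$ the total space becomes the discrete set $G_\bq$. The strategy parallels Lemma \ref{elespaciototal}, but now seeking a quasi-isomorphism from the zero cdgl rather than from $L$. The key ingredient is the defining formula $Dsx = x - sdx + \ad_x$, which pairs the copy of $L$ sitting inside $\Hom(\quic(L),L) \cong \Hom(\overline\quic(L),L) \timest L$ with the suspension $sL$ inside $\widetilde{\derr L \timest sL}$ so as to produce an acyclic cone-like subcomplex, while $\ad L$ sitting in $\widetilde{\derr L}$ cancels against the $\ad_x$ terms. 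A direct verification, by the same type of computation used to establish the isomorphism (\ref{bueniso}) and the acyclicity of $\widetilde\gamma$ in the proof of Lemma \ref{elespaciototal}, shows that the perturbed differential $D_{\overline\varphi}$ makes $(\Hom(\quic(L),L) \timest \widetilde{\derr L \timest sL})^{\overline\varphi}$ acyclic.

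Combining these three facts produces a fibration sequence $\aut_{G_\bq}(X_\bq) \to G_\bq \to B\aut_1(X_\bq)$ having the same fiber, total space, and base as the rationalization of $\aut_G(X) \to G \to B\aut_1(X)$ furnished by Proposition \ref{rationalbaut}, and compatible with the Puppe extension coming from Theorem \ref{mainre1}. This uniquely identifies the homotopy type.
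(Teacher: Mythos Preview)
Your proposal is correct and follows essentially the same approach as the paper: identify the fiber via a Lemma \ref{lafibra}-type argument, show each component of the total space is acyclic via a Lemma \ref{elespaciototal}-type argument exploiting the formula $Dsx = x - sdx + \ad_x$, and recognize the base as $B\aut_1(X_\bq)$. The paper's proof is in fact terser than yours---it simply invokes ``an analogous argument to that of Lemma \ref{lafibra}'' and ``an analogous argument to that of Lemma \ref{elespaciototal}''---so your added detail (in particular the observation that $\widetilde{\derr L\timest sL}$ being $1$-connected forces gauge equivalence to take place already in $\Hom(\quic(L),L)$) is a welcome clarification.
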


\begin{proof} An analogous argument to that of Lemma \ref{lafibra} shows that
$$
F={{\textstyle\coprod}}_{[\varphi]\in\calge/H_0(L)}\,\,\langle  \Hom(\quic(L),L)\rangle^{\overline\varphi},
$$
which, in view of \S\ref{evalufibra}, has the homotopy type of $\aut_{G_\bq}(X_\bq)$.

Now, by construction, the total space of (\ref{recubrecu1}) has as many path components as the order of $G_\bq$. Finally, an analogous argument to that of Lemma \ref{elespaciototal} proves that, for each $\varphi$,
$$
\bigl( \Hom(\quic(L),L)\timest\widetilde{\derr L\timest sL}\bigr)^{\overline\varphi} \simeq 0.
$$
This shows that each of the components is homotopically trivial and the proposition follows.
\end{proof}

On the other hand, consider the restriction of the twisted product (\ref{clave}) to
$$
\Hom(\overline\quic(L),L)\longrightarrow \Hom(\overline\quic(L),L)\timest\widetilde\derr L\longrightarrow \widetilde\derr L
$$
which, in view of \S\ref{comple}, stays in $\catcdgl$. Its realization is a fibration
$$
\langle \Hom(\overline\quic(L),L)\rangle \longrightarrow \langle \Hom(\overline\quic(L),L)\timest\widetilde\derr L\rangle\longrightarrow \langle\widetilde\derr L\rangle
$$
and we restrict again to  certain path components of the total space to obtain another fibration,
\begin{equation}\label{recubrecu2}
\mathfrak{F}\longrightarrow {{\textstyle\coprod}}_{[\varphi]\in\Pi}\,\,\langle \Hom(\overline\quic(L),L)\timest\widetilde{\derr L}\rangle^{\overline\varphi} \longrightarrow \langle \widetilde{\derr L}\rangle.
\end{equation}
Similar arguments to those of Proposition \ref{recubre1} prove:
\begin{proposition}\label{recubre2} The  fibration (\ref{recubrecu2}) has the rational homotopy type of
$$
\aut_{\pi}^*(X)\longrightarrow \pi_\bq\longrightarrow B\aut_1^* (X).
$$
\hfill$\square$
\end{proposition}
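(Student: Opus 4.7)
The plan is to mirror the proof of Proposition \ref{recubre1} in the pointed setting. First I would check that
\[
0\to \Hom(\overline\quic(L),L)\to \Hom(\overline\quic(L),L)\timest\widetilde\derr L\to \widetilde\derr L\to 0
\]
is a short exact sequence of cdgl morphisms. Since $\widetilde\derr L$ is $1$-connected the completeness pathologies of Example \ref{example1} do not arise, and the machinery of Section \ref{comple} together with Propositions \ref{homcomp} and \ref{twistcomp} provides a well-defined cdgl structure on the middle term. Its realization is then a genuine simplicial fibration, whose restriction of the total space to the path components indexed by the elements of $\Pi$ is exactly (\ref{recubrecu2}).

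Second, I would identify $\mathfrak F$ by replaying the argument of Lemma \ref{lafibra}. The key difference is that the $L$-summand in the decomposition $\Hom(\quic(L),L)\cong \Hom(\overline\quic(L),L)\timest L$ has disappeared, and the gauge action on the total space is driven only by cycles $\theta\in\widetilde\derr_0 L\cap\ker D$. The same gauge computation then shows that the MC elements $\overline\varphi=\varphi\circ q$ and $q$ of $\Hom(\overline\quic(L),L)\timest\widetilde\derr L$ are equivalent precisely when there exists such a $\theta$ with $e^\theta=\varphi$, that is, precisely when $[\varphi]\in\Pi$. Hence
\[
\mathfrak F=\coprod_{[\varphi]\in\Pi}\,\langle\Hom(\overline\quic(L),L)\rangle^{\overline\varphi},
\]
which, by Corollary \ref{cor1} and Remark \ref{remark}, has the rational homotopy type of the disjoint union of the components of $\map^*(X_\bq,X_\bq)$ indexed by classes in $\pi_\bq$, namely $\aut^*_\pi(X)$.

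Third, the analogue of Lemma \ref{elespaciototal} establishes the contractibility of each component $\langle\Hom(\overline\quic(L),L)\timest\widetilde\derr L\rangle^{\overline\varphi}$ of the total space. Combining the cdgl isomorphism $\Gamma$ of (\ref{bueniso}) with the quasi-isomorphism $\alpha^*$ of (\ref{alfa}), and extending by the identity on $\widetilde\derr L$, yields a quasi-isomorphism
\[
(\des\derr L\timest\widetilde\derr L,\dtil)\stackrel{\simeq}{\longrightarrow}(\Hom(\overline\quic(L),L)\timest\widetilde\derr L,D_{\overline\varphi}),
\]
with $\dtil\theta=D\theta-\des\theta$ for $\theta\in\widetilde\derr L$. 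A direct cycle chase, parallel to the one for $\widetilde\gamma$ at the end of Lemma \ref{elespaciototal}, shows that the component at $0$ of the source is acyclic; note that here the contribution $\ad_x$ that appeared in Lemma \ref{elespaciototal} is absent, since the $L$-factor has been removed.

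Finally, to identify the base, I would apply Theorem \ref{mainre2} to the trivial subgroup $\{[\id_X]\}\subset\cale^*(X)$: the inclusion $\widetilde\derr L\hookrightarrow \derpi L$ is a quasi-isomorphism of connected cdgl's (an immediate degreewise check), so $\langle\widetilde\derr L\rangle\simeq B\aut_1^*(X)$ rationally. Assembling the three steps identifies (\ref{recubrecu2}) with the fibration $\aut^*_\pi(X)\to\pi_\bq\to B\aut_1^*(X)$. The main obstacle will be the second step: verifying that distinct classes in $\Pi$ produce pairwise non-gauge-equivalent MC elements in $\Hom(\overline\quic(L),L)\timest\widetilde\derr L$. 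This is precisely where the replacements $\quic(L)\rightsquigarrow\overline\quic(L)$ and $\derr L\timest sL\rightsquigarrow\widetilde\derr L$ are essential, since they eliminate the surplus degree-zero gauge freedom responsible, in the free case of Proposition \ref{recubre1}, for quotienting by $H_0(L)$.
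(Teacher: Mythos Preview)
Your overall strategy matches the paper's, which simply states that the argument is analogous to Proposition \ref{recubre1}. However, your step 2 contains a misstatement that, while not fatal to the conclusion, misrepresents the mechanism. Since $\widetilde\derr L$ is the $1$-connected cover, one has $(\widetilde\derr L)_0=0$; there are no nontrivial degree-$0$ cycles $\theta$ available for the gauge computation you propose. Consequently the degree-$0$ part of $\Hom(\overline\quic(L),L)\timest\widetilde\derr L$ coincides with that of $\Hom(\overline\quic(L),L)$, and the MC elements $\overline\varphi$ and $q$ are gauge equivalent in the twisted product if and only if they already are in $\Hom(\overline\quic(L),L)$, that is, if and only if $\varphi\sim\id_L$. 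The indexing of the total space by $\Pi$ in (\ref{recubrecu2}) is therefore imposed by definition, not produced by the gauge action; and for the same reason the fibre over each component of the total space is the single component $\langle\Hom(\overline\quic(L),L)\rangle^{\overline\varphi}$, whence $\mathfrak F=\coprod_{[\varphi]\in\Pi}\langle\Hom(\overline\quic(L),L)\rangle^{\overline\varphi}$ follows at once.

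Your ``main obstacle'' is thus a non-issue: the distinctness of components for distinct $[\varphi]\in\Pi$ is automatic from $(\widetilde\derr L)_0=0$, and this is exactly why one passes to the $1$-connected cover here rather than to $\derpi L$. The remaining steps---contractibility of each total-space component via the analogue of Lemma \ref{elespaciototal}, and identification of the base $\langle\widetilde\derr L\rangle\simeq B\aut_1^*(X_\bq)$ (which also follows directly from Theorem \ref{propopos})---are correct as you outline them.
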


We finish with two examples which cover a wide spectrum. We see how any finitely generated rational group of nilpotency index $n$ can be realized as a subgroup of self homotopy equivalences of the rationalization of a suitable finite complex, acting nilpotently on the homology of the complex with the same nilpotency index. The corresponding classifying group is explicitly described in terms of derivations.  On the other hand, any such group can also be realized as a subgroup of self homotopy equivalences  acting nilpotently on the homotopy groups of the complex with the same nilpotency index.

For that we first recall how to describe in simple terms the exponential and the logarithm  in the   finitely generated nilpotent case.  Denote by $U(n)$ the group of $n\times n$ {\em strictly triangular matrices} with rational entries, i.e., matrices $(m_{ij})$ with $m_{ij}= 0$ if $i\le j$.  In $U(n)$ we consider the Lie bracket given by commutators. On the other hand, denote by $T(n)$ the group of $n\times n$  {\em unitriangular matrices} also over $\bq$, that is, lower triangular matrices where all entries in the diagonal are 1. Then,  any finitely generated nilpotent $0$-local group and any finitely generated nilpotent Lie algebra  can be respectively embedded in $U(n)$ and $T(n)$ so that their logarithm and exponential maps are just the restriction of the classical bijections
$$
\xymatrix{U(n)\,\, \ar@<0.98ex>[r]_(.53){\scriptscriptstyle \cong}^-{\exp} &{\,T(n).} \ar@<0.95ex>[l]^-{\log} }
$$

\begin{ex} Let $M$ be a finitely generated nilpotent Lie algebra, with nilpotency index less than or equal to $n$, and  concentrated in degree $0$. By the above observation, $M$ can be embedded in $T(n)$. Consider the finite nilpotent complex $X=\vee_{j=1}^n S^m$ with $m>1$, whose minimal model is $L=(\lib(y_1,\dots,y_n),0)$ where $|y_j|=m-1$ for all $j$. Note that, for degree reasons, any derivation $\theta\in\derr_0 L$  is necessarily  of the form $$
\theta(y_j)=\sum_{k=1}^n\lambda_{j k}\,y_k,\qquad\lambda_{j k}\in\bq.
$$
That is, $\theta$ is identified with the matrix $(\lambda_{j k})$ considering
\begin{equation}\label{matriz}
\begin{pmatrix}
  \theta(y_1) \\
  \vdots \\
\theta(y_n)
\end{pmatrix}=\theta \begin{pmatrix}
  y_1 \\
  \vdots \\
y_n
\end{pmatrix}.
\end{equation}
Hence, $M$ is a sub Lie algebra of $\derr_0(L)$ which, being in $T(n)$, it determines a decreasing filtration of length at most $n$ of $V=\Span\{y_1,\dots,y_n\}\cong s^{-1}\widetilde H_*(X_\bq)$,
\begin{equation}\label{filfinal} V=V^0\supset\dots\supset V^i\supset V^{i+1}\supset\dots\supset V^q=0,\end{equation}
where $V^i=\theta(V^{i-1})$, $i\ge 1$.

On the  other hand, if we denote by ${\mathcal G}=\exp(M)$, we may also identify any matrix $\varphi=e^\theta\in{\mathcal G}$ with an automorphism of $L$ as in (\ref{matriz}):
$$
\varphi\colon L\stackrel{\cong}{\longrightarrow} L,\qquad \begin{pmatrix}
  \varphi(y_1) \\
  \vdots \\
\varphi(y_n)
\end{pmatrix}=\varphi \begin{pmatrix}
  y_1 \\
  \vdots \\
y_n
\end{pmatrix}.
$$
Note that $\aut(L)=\aut(L)/\sim$ so that  ${\mathcal G}=\aut_{\mathcal G}(L)$ is identified with the rational subgroup $G$ of $\cale(X_\bq)$ whose action in $H_*(X_\bq)$ produces, as central series, the suspension of the filtration (\ref{filfinal}) of $s^{-1}\widetilde H_*(X_\bq)$ (see Definitions \ref{K} and \ref{explog}). That is, $G$ acts nilpotently on $H_*(X_\bq)$.

 In other terms, $M=\derge_0 L$ and, by Theorem \ref{mainre1}, the sequence
 $$
L\stackrel{\ad}{\longrightarrow} \derge L \longrightarrow \derge L\timest sL
$$
is a Lie model of the
classifying fibration
$$
X_\bq\longrightarrow B\aut^*_{G}(X_\bq)\longrightarrow B\aut_{G}(X_\bq).
$$
Proceeding exactly in the converse direction, any  nilpotent rational group $G$ of $\cale(X_\bq)$ can be identified with a rational subgroup of $U(n)$ and thus, with a  subgroup $\mathcal G=\aut_{\mathcal G}(L)$ of automorphisms of $L$.  In turn, this determines  the nilpotent Lie algebra $M=\log({\mathcal G})\subset T(n)$ so that $\derge_0 L=M$.

 Just to illustrate how to proceed in a particular instance, choose for example
 $$
M=T(3)=\left\{\begin{pmatrix}
                              0 & 0 & 0 \\
                              \alpha & 0 & 0 \\
                              \beta & \gamma & 0
                            \end{pmatrix},\,\,\alpha,\beta,\gamma\in\bq\right\}
                             $$  and let
                              $X=S^m\vee S^m\vee S^m$, with $m> 1$ whose minimal model is $L=(\lib(y_1,y_2,y_3),0)$. Every $\theta\in M$ acts in $s^{-1}\widetilde H_*(X)$ by $\theta(y_1)=0$, $\theta(y_2)=\alpha y_1$ and $\theta(y_3)=\beta y_1+\gamma y_2$. This determines the filtration
                              $$
                              s^{-1}\widetilde H_*(X)=\Span\{y_1,y_2,y_3\}\supset\Span\{y_1,y_2\}\supset\Span\{y_1\}\supset 0.
                              $$
Then,  the group $\mathcal G=\aut_{\mathcal G}(L)=\exp M=U(3)$ is given by the automorphisms
$$
\{\varphi\colon L\stackrel{\cong}{\to} L,\,\,\varphi(y_1)=\lambda y_1,\,\,\varphi(y_2)=\lambda y_1+ y_2,\,\,\varphi(y_3)=\mu y_1+ \rho y_2+ y_3 \,\,\,\lambda,\mu,\rho \in\bq\},
$$
and is identified with a subgroup $G\subset \cale(X_\bq)$ which acts nilpotently on $H_*(X_\bq)$ with nilpotency index $3$. Note that  $M=\derge_0 L$.

\end{ex}
\setcounter{footnote}{1}
\begin{ex}
We consider now the dual setting: let $\Pi$ be a finitely generated, nilpotent, rational  group, with nilpotency index less than or equal to $n\ge 1$.   By the observation above, $\Pi$ is embedded in $U(n)$. Consider the  complex $Y=\prod_{j=1}^n S^m$ with $m\ge 1$ odd whose Sullivan minimal model is $A=(\land V,0)$. with $V=\Span\{x_1,\dots,x_n\}$ concentrated in degree $m$. Then, any map $Y_\bq\to Y_\bq$ is necessarily modeled by a cdga morphism $\psi\colon A\to A$ which might be identified with an $n\times n$ matrix  by imposing
$$
 \begin{pmatrix}
  \psi(x_1) \\
  \vdots \\
\psi(y_n)
\end{pmatrix}=\psi \begin{pmatrix}
  x_1 \\
  \vdots \\
x_n
\end{pmatrix}.
$$
Note also that  $\aut(A)=\aut(A)/\sim$, that is,  $\aut^*(Y_\bq)=\aut^*(Y_\bq)/\sim$ and thus,
the group $\Pi$ is identified with a subgroup $\pi$ of $\cale^*(Y_\bq)$ acting nilpotently on $\pi_*(Y_\bq)\cong V$ and thus, acting also nilpotently on $H_*(Y_\bq)\cong A^\sharp$. Observe that the action of $\pi_1(Y_\bq)$ (which is trivial or abelian if $Y$ is a torus) on $\{Y_\bq,Y_\bq\}^*$ is trivial and thus $\cale^*(Y_\bq)=\cale(Y_\bq)$.

By \cite[Theorem 10.2]{bfmt0} the cdgl
$$
\widehat\quil(A^\sharp)
$$
is a Lie model of $Y$ which turns out to be minimal. Indeed, in view of the explicit definition of $\quil$ in (\ref{quillenlc}), denote
$$
y_{i_1\dots i_s}=s^{-1}(x_{i_1}\dots x_{i_s})^\#,\esp{with} 1\le i_1<\dots< i_s\le n\quad 1\le s\le n,
$$
and observe that
$$
\widehat\quil(A^\sharp)=(\libc(y_{i_1\dots i_s}),d),
$$
where the differential is quadratic and  given as follows: fixed integers $1\leq i_1< \dots <i_s\leq n$ and let $E$ be the set of decompositions of   $\{i_1, \dots , i_s\}$ in two disjoint tuples,  $\{j_1, \dots , j_p\}$ and $\{k_1, \dots , k_q\}$, with $j_1<\dots < j_p$ and $k_1<\dots < k_q$. Then,

$$d(y_{i_1\dots i_s}) = \frac{1}{2}\sum_E \varepsilon_E [y_{j_1\dots j_p}, y_{k_1\dots k_q}],$$
where $\varepsilon_E$ denotes the sign of the permutation
$$i_1, \dots , i_s \mapsto j_1, \dots , j_p, k_1, \dots , k_q.$$
Remark that, whenever $Y$ is not the torus, i.e, $m>1$, then $\widehat\quil(A^\sharp)=\quil(A^\sharp)$ as no generator of degree $0$ appears. Call $L=\widehat\quil(A^\sharp)$ and  note that the map $\psi\mapsto\text{homotopy class of}\,\,\widehat\quil(\psi)$ defines a group isomorphism
\begin{equation}\label{finfin2}
\aut(A)\cong\aut(L)/\sim
\end{equation}
which exhibits $\Pi$ as a subgroup of $\aut(L)/\sim$. We may then consider $\aut_{\Pi}(L)$, which is a complete, possibly non nilpotent group, and thus different from $\Pi$ in general.  By Theorem \ref{mainre2} the corresponding cdgl  $\derpi(L)$ is a Lie model of $B\aut_{\pi}(Y_\bq)$.

   For instance, choose
\begin{equation}\label{finfin}
\Pi=U(2)=\left\{\begin{pmatrix}
                              1 & 0 \\
                              \lambda & 1
                            \end{pmatrix},\,\,\lambda\in\bq\right\}
                           \end{equation}
which is isomorphic to $\bq$, and corresponds to a group $\pi$ of self homotopy equivalences of the rational torus $T_\bq=S^1_\bq\times S^1_\bq$   whose Sullivan minimal model is $A=(\land(x_1,x_2),0)$. As explained before, the action of $\Pi$ on $\Span\{x_1,x_2\}$, of nilpotency index $2$, corresponds to an action of $\pi$ on $\pi_*(T_\bq)$ which has the following central series:
                            $$
                            \pi_*(T_\bq)\cong\Span\{x_1,x_2\}\supset \Span\{x_1\}\supset 0.
                            $$
        As noted, the minimal Lie model of $T_\bq$ is the cdgl
        $$
        L=\widehat\quil(A^\sharp)=(\libc(y_1,y_2,y_{12}),d),\quad |y_1|=|y_2|=0,\quad |y_{12}|=1,\quad dy_{12}=[y_1,y_2].
        $$
        Another easy computation shows that for every $\psi\in \Pi$  as in (\ref{finfin}), $\widehat\quil (\psi)$ is the automorphism of $L$ given by the $3\times 3$ matrix
\begin{equation}\label{finfinfin}
        B=\begin{pmatrix}
          1 & 0 & 0 \\
          \lambda & 1 & 0 \\
          0 & 0 & 1
        \end{pmatrix}.\qquad\text{That is,}\qquad  \begin{pmatrix}
 \widehat\quil (\psi)(y_1) \\
 \widehat\quil (\psi)(y_2) \\
\widehat\quil (\psi)(y_{12})
\end{pmatrix}=B \begin{pmatrix}
  y_1 \\
  y_2 \\
y_{12}
\end{pmatrix}
\end{equation}
\end{ex}
Now, in view of (\ref{finfin2}), any automorphism in $\aut_\Pi(L)$ is homotopic to $\widehat\quil (\psi)$ for some $\psi\in\Pi$. Hence, by Corollary \ref{corfin2}, and since the differential is trivial in $L$,
$$
\log\bigl(\aut_\Pi(L)\bigr)=\log(\Pi)+D(\derr_1L)=\log(\Pi).
$$
Note that for any matrix $B$ as in (\ref{finfinfin}),
$$
\log(B)=\begin{pmatrix}
          0 & 0 & 0 \\
          \lambda & 0 & 0 \\
          0 & 0 & 0
        \end{pmatrix}.
        $$
Thus, the Lie algebra $\log(\Pi)$ is identified to the $3\times 3$ matrices of this sort which, in turn, define the corresponding derivations of degree $0$ in $L$. Denote $M=\log(\Pi)$ and observe that the induced filtration of $M$ on $\Span\{y_1,y_2,y_{12}\}$ is
        $$
         \Span\{y_1,y_2,y_{12}\}\supset \Span\{y_1\}\supset 0.
         $$
         Finally,  by Theorem \ref{mainre2},
        $\derr_M L$ is a Lie model of $B\aut_{\pi}^*(T_\bq)$.

\bigskip
\bigskip\bigskip
\noindent {\sc Institut de Math\'ematiques et Physique, Universit\'e Catholique de Louvain, Chemin du Cyclotron 2,
1348 Louvain-la-Neuve,
         Belgique}.

\noindent\texttt{yves.felix@uclouvain.be}

\bigskip

\noindent{\sc Departamento de \'Algebra, Geometr\'{\i}a y Topolog\'{\i}a, Universidad de M\'alaga, 29080 M\'alaga, Spain.}

\noindent
\texttt{m\_fuentes@uma.es, aniceto@uma.es}

 \end{document}